\newtheorem{thm}{Theorem}[section]
\newtheorem{lem}[thm]{Lemma}
\newtheorem{prop}[thm]{Proposition}
\theoremstyle{definition}
\theoremstyle{remark}
\newtheorem{rem}[thm]{Remark}
\numberwithin{equation}{section}
\theoremstyle{remark}
\newcommand{\mbb}{\mathbb}
\newcommand{\ra}{\rightarrow}
\newcommand{\pa}{\partial}
\newcommand{\ov}{\overline}
\newcommand{\ep}{\epsilon}
\newcommand{\no}{\noindent}
\newcommand{\cal}{\mathcal}
\newcommand{\la}{\lambda}
\newcommand{\si}{\sigma}
\newcommand{\cord}{(z_1,z_2, \hdots ,z_k)}
\begin{document}

	\title{Rigidity of Julia sets  of families of biholomorphic mappings in higher dimension}
	\keywords{}
	\subjclass{Primary: 32F45  ; Secondary : 32Q45}
	\author{Sayani Bera, Ratna Pal}
	\address{SB: School of Mathematics, Ramakrishna Mission Vivekananda Educational and Research Institute, PO Belur Math, Dist. Howrah, 
		West Bengal 711202, India}
	\email{sayanibera2016@gmail.com}
	\address{RP: Department of Mathematics, Indian Institute of Science Education and Research, Pune, Maharashtra-411008, India}
	\email{ratna.math@gmail.com}
	%
	%
	%
	\begin{abstract}
	
	The goal of this article is to study a rigidity property of Julia sets of certain classes of automorphisms in $\mbb C^k$, $k \ge 3.$ First, we study the relation between two polynomial shift--like maps in $\mbb C^k$, $k \ge 3$, that share the same backward and forward Julia sets (or non-escaping sets). Secondly, we study the relation between any pair of skew products of H\'{e}non maps in $\mbb C^3$ having the same forward and backward Julia sets. 
	\end{abstract}
	
	\maketitle
	
	\section{Introduction}
	The fundamental dichotomy of a dynamical system comes from splitting the ambient space into the Fatou set and the Julia set. The Fatou set is an open set where the dynamics is tame and the Julia set is the complement of the Fatou set which supports the wild behavior of the dynamical system. The simplest example of dynamical system which exhibit a rich dynamical behaviour, comes from the class of polynomials in $\mathbb{C}$ of degree greater than or equal to $2$. Being the hub of chaos of the dynamical system, most of the Julia sets of polynomials in $\mathbb{C}$, have very complicated fractal structure. A little perturbation of a polynomial map can change the structure of the corresponding Julia set drastically which gives a hint towards the fact that the Julia sets are quite a rigid object. In fact, a result by Beardon (\cite{Be1}) validates this anticipation which states that for two polynomials $P$ and $Q$ (of degree greater than or equal to $2$), if $J_P=J_Q$  where $J_P$ and $J_Q$ are the Julia sets of $P$ and $Q$ respectively, then 
	\[
	P\circ Q=\sigma \circ P \circ Q
	\]
	where $\sigma(z)=az+b$ with $\lvert a \rvert=1$ and $\sigma(J_P)=J_P$. Note that it was known for long time that if two polynomials $P$ and $Q$ commute, then their Julia sets coincide.  The results obtained in \cite{BE}, \cite{Be2}, \cite{SS}, \cite{LP}  are also pertinent in this direction.
	
	\medskip
	\no 
	In dimension $2$, an analogue of this kind of {\it{rigidity}} property of the Julia sets of H\'{e}non maps has recently been established in \cite{rigidity} (Theorem 1.1) 
	which shows that if we start with two H\'{e}non maps $H$ and $F$ for which both forward and backward Julia sets  coincide, i.e., $J_H^\pm=J_F^\pm$, then 
	\[
	F\circ H=C\circ H\circ F
	\]
	where $C(x,y)=(\delta_- x, \delta_+y)$ with $\lvert\delta_\pm\rvert=1$ and $C(J_H^\pm)=J_H^\pm$.
	
	\medskip 
	\no 
	In this note, we study  {\it{rigidity}} property of Julia sets of some special classes of biholomorphic mappings in $\mathbb{C}^k$, for $k\geq 3$.  The first class considered here  is the class of shift--like polynomial maps introduced by Bedford and Pambuccian in \cite{BP}.  Recall that a shift--like map of type $\nu$, where  $1 \le \nu \le k-1$, in $\mbb C^k$ is a map of the form:
	\[ 
	S(z_1,\hdots,z_k)=(z_2,\hdots,z_k,az_1+p(z_{k-\nu+1}))
	\]
	with $p$ a polynomial in $\mathbb{C}$ of degree greater than or equal to $2$  and with $0\neq a \in \mathbb{C}$. Note that the class of shift--like polynomial maps is a generalization of H\'{e}non maps for dimensions $k>2$.  Further for a polynomial $\nu-$shift $S$, $S^{\nu(k-\nu)}$ or $S^m$ is a regular map where $m$ is a multiple of both $\nu$ and $k-\nu$ (See \cite{mypaper} for a detailed proof).
	
\medskip\no 
First, let us briefly recall a few terminologies associated to shift--like maps. Suppose $S$ is a $\nu-$shift such that $1 \le  \nu \le k-1.$ Recall from \cite{BP} that the filtration corresponding to maps $S$ is as follow:
	\[ V_{R,S}^+=\bigcup_{i=k-\nu+1}^k V^i_{R,S} \text{ and } V_{R,S}^{-}=\bigcup_{i=1}^{k-\nu} V^i_{R,S}\] where 
	\[
	V_{R,S}=\left \{z \in \mathbb C^k: \lvert z_j \rvert\leq R \text{ for } 1\leq j \leq k \right \}
	\]
	and
	\[
	V_{R,S}^i=\left \{ z \in \mathbb C^k \setminus V_{R,S}: \lvert z_i \rvert>\max\{\lvert z_j\rvert, R\} \text{ for } 1 \le j \neq i \le k \right\}.
	\]
	We define non-escaping sets
	\[
	K_S^{\pm} = \{z \in \mathbb C^k : \;\text{the sequence}\; \left(S^{\pm n}(z) \right) \; \text{is bounded} \},
	\]
	and the escaping sets
	\begin{equation}\label{escape}
	U_S^\pm=\mathbb{C}^k \setminus K_S^\pm=\bigcup_{n=0}^\infty S^{\mp n}(V_{R,S}^\pm).
	\end{equation}
	Further, note that $K_S^\pm \subset V_R \cup V_{R,S}^\mp$.
	  The Green functions 
	\[
	G^+_S(z) = \lim_{n \rightarrow \infty} \frac{1}{d^n} \log^+\left \Vert S^{ n\nu }(z) \right \Vert
	\]
	and 
	\[
	G^-_S(z) = \lim_{n \rightarrow \infty} \frac{1}{d^n} \log^+ \left \Vert S^{ -n(k-\nu)}(z)\right \Vert
	\]
	are continuous and  plurisubharmonic in $\mathbb{C}^k$ which vanish precisely on $K_S^\pm$. Further, 
	\[
	G_S^\pm(z)=\log \lvert z \rvert + O(1)
	\]
	in $V_{R,S}^\pm$. The {\it{rigidity}} theorem for shift--like polynomial maps in $\mathbb{C}^k$, for $k\geq 3$, can now be stated as:
	
	\begin{thm}\label{rigidity of shifts}
		 Let $S$ be a polynomial shift--like map of type $\nu$, $1 \le \nu \le k-1$ in $\mbb C^k$, $k \ge 3$. Let $T$ be another shift--like polynomial of degree $d \ge 2$, that preserves $K_S^\pm$, i.e., $T(K_S^\pm)=K_S^\pm$, then there exists $$C(z_1,z_2, \ldots,z_k)= (\delta_- z_1,\hdots,\delta_- z_{k-\nu},\delta_+z_{k-\nu+1},\hdots,\delta_+ z_k) \text{ with }|\delta_\pm|=1$$ such that 
		$$ \cal{T} \circ \cal{S}=C \circ \cal{S}\circ \cal{T}=\cal{S}\circ \cal{T}\circ C$$
		where $\cal{S}=S^{m}$ and $\cal{T}=T^{m}$ with $m=\text{lcm}(\nu,k-\nu)$.
	\end{thm}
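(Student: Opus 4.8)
The plan is to realize the map $C$ as the commutator $C:=\mathcal T\circ\mathcal S\circ\mathcal T^{-1}\circ\mathcal S^{-1}$ and then to use the Green functions $G_S^\pm$ to prove that this automorphism is precisely the linear diagonal map in the statement. Recall that $\mathcal S=S^m$ and $\mathcal T=T^m$ are regular polynomial automorphisms of $\mbb C^k$ (see \cite{mypaper}), so the Green function formalism recalled above applies, and that $K_{\mathcal S}^\pm=K_S^\pm$, $K_{\mathcal T}^\pm=K_T^\pm$, so the hypothesis $T(K_S^\pm)=K_S^\pm$ also reads $\mathcal T(K_S^\pm)=K_S^\pm$. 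Writing $p=m/\nu$, $q=m/(k-\nu)$ and iterating $G_S^+\circ S^\nu=d\,G_S^+$, $G_S^-\circ S^{-(k-\nu)}=d\,G_S^-$, one gets $G_S^+\circ\mathcal S=d^{\,p}G_S^+$ and $G_S^-\circ\mathcal S^{-1}=d^{\,q}G_S^-$; in particular $G_S^+\circ\mathcal S^{\pm1}$ and $G_S^-\circ\mathcal S^{\pm1}$ are fixed positive-constant multiples of $G_S^+$, resp.\ $G_S^-$. Since $\mathcal T\circ\mathcal S=C\circ\mathcal S\circ\mathcal T$ holds by the definition of $C$, what has to be shown is that $C$ has the stated form and that $C$ commutes with $\mathcal S\circ\mathcal T$, the latter giving the remaining identity $\mathcal T\circ\mathcal S=\mathcal S\circ\mathcal T\circ C$.

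The first step is a rigidity property of the Green functions. Because $\mathcal T$ is an automorphism with $\mathcal T(K_S^\pm)=K_S^\pm$, it preserves the escaping sets $U_S^\pm$; hence $G_S^+\circ\mathcal T$ is a non-negative continuous plurisubharmonic function on $\mbb C^k$, pluriharmonic on $U_S^+$, vanishing exactly on $K_S^+$ and with logarithmic growth controlled on the filtration, and similarly for $G_S^-\circ\mathcal T$ with $K_S^-$. By the uniqueness of such a function up to a positive multiplicative constant --- a Phragm\'{e}n--Lindel\"{o}f/comparison argument on the escaping set, in the spirit of \cite{BP} and \cite{rigidity} --- there are constants $e_\pm>0$ with $G_S^+\circ\mathcal T=e_+\,G_S^+$ and $G_S^-\circ\mathcal T=e_-\,G_S^-$, hence also $G_S^\pm\circ\mathcal T^{-1}=e_\pm^{-1}G_S^\pm$. (En route one records that, since $\mathcal T$ must respect the shapes of $K_S^+$ and $K_S^-$, the shift type of $T$ is $\nu$ or $k-\nu$ and $\deg T=d$; this will not be needed below.)

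The heart of the argument is a telescoping cancellation. Expanding $G_S^+\circ C=G_S^+\circ\mathcal T\circ\mathcal S\circ\mathcal T^{-1}\circ\mathcal S^{-1}$ and applying, from the innermost factor outwards, $G_S^+\circ\mathcal S^{-1}=d^{-p}G_S^+$, then $G_S^+\circ\mathcal T^{-1}=e_+^{-1}G_S^+$, then $G_S^+\circ\mathcal S=d^{\,p}G_S^+$, then $G_S^+\circ\mathcal T=e_+\,G_S^+$, all four multiplicative constants cancel and we obtain $G_S^+\circ C=G_S^+$; the identical computation gives $G_S^-\circ C=G_S^-$. Thus $C$ is a polynomial automorphism of $\mbb C^k$ that preserves both Green functions \emph{exactly}. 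Using $G_S^\pm=\log\lVert z\rVert+O(1)$ on $V_{R,S}^\pm$, that $\mbb C^k$ is covered near infinity by $V_{R,S}^+\cup V_{R,S}^-$, and the invariance $G_S^\pm\circ C=G_S^\pm$, a now-standard argument (cf.\ \cite{BP}, \cite{rigidity}) shows that $C$ has linear growth at infinity and is therefore affine, $C(z)=Az+b$. Moreover $\mathrm{Jac}\,C=1$, since the Jacobians of $S$ and $T$, and hence of $\mathcal S$ and $\mathcal T$, are non-zero constants; thus $\det A=1$.

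It remains to pin down the affine map $C$, and this is where the main difficulty lies. Because $C$ fixes $G_S^+$ and $G_S^-$, comparing $G_S^\pm(Cz)$ with $G_S^\pm(z)$ on each filtration cone $V_{R,S}^i$ --- using $G_S^\pm=\log\lVert z\rVert+O(1)$ on $V_{R,S}^\pm$ and the freedom in the non-dominant coordinates inside each cone --- forces $A$ to respect the coordinate splitting $\mbb C^k=\mbb C^{k-\nu}\oplus\mbb C^{\nu}$ and to be monomial on each of the two blocks. To eliminate the residual coordinate permutations, to get $b=0$, and to prove $|\delta_\pm|=1$ (so that $C$ takes the stated form with $C(K_S^\pm)=K_S^\pm$), one needs the \emph{full} invariance of $G_S^\pm$ under $C$ --- i.e.\ the functional equations $G_S^+\circ S^\nu=d\,G_S^+$, $G_S^-\circ S^{-(k-\nu)}=d\,G_S^-$, not merely the estimate $G_S^\pm=\log\lVert z\rVert+O(1)$ --- together with $\det A=1$ and the constancy of $\mathrm{Jac}\,S$; this same analysis delivers the commutation of $C$ with $\mathcal S\circ\mathcal T$, hence the identity $\mathcal T\circ\mathcal S=\mathcal S\circ\mathcal T\circ C$. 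This concluding normalization is the higher-dimensional analogue of the one carried out in \cite{rigidity} for H\'{e}non maps; the secondary difficulties are the uniqueness lemma of the first step and the determination of the shift type and degree of $T$.
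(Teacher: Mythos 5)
Your proposal targets the same map $C=\mathcal T\circ\mathcal S\circ\mathcal T^{-1}\circ\mathcal S^{-1}$ as the paper, but the characterization argument has a gap precisely where the paper says the main difficulty of the shift-like (as opposed to H\'{e}non) case lies. You assert that a non-negative continuous psh function on $\mathbb C^k$, pluriharmonic on $U_S^+$, vanishing exactly on $K_S^+$ and with logarithmic growth, is determined up to a positive constant, and then apply this to $G_S^+\circ\mathcal T$ to get $G_S^+\circ\mathcal T=e_+G_S^+$. But for a shift of type $\nu$ with $k\ge 3$, the Green functions $G_S^\pm$ are \emph{not} pluriharmonic on $\mathbb C^k\setminus K_S^\pm$ (only psh), and the paper explicitly flags that "it is not clear whether the pluricomplex Green functions for $K_S^\pm$ are $G_S^\pm$"; the near-infinity model is a positive-dimensional indeterminacy variety, not a point. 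So the Phragm\'{e}n--Lindel\"{o}f/comparison uniqueness you invoke --- valid for H\'{e}non maps and for classical Green functions --- is not available here, and your premise that $G_S^+\circ\mathcal T$ is pluriharmonic on $U_S^+$ is also unjustified. The paper circumvents this by working at the Monge--Amp\`{e}re level: it shows $K_S^\pm\subset K_T^\pm$, applies the Dinh--Sibony uniqueness of closed positive currents of prescribed bidegree and mass supported on $\ov{K_T^\pm}$ to obtain $(dd^cG_S^\pm)^r=(dd^cG_T^\pm)^r$, and then uses the Bedford--Taylor uniqueness lemma (Lemma~\ref{diff}) to conclude $G_S^\pm=G_T^\pm$ (and simultaneously that $T$ is exactly a $\nu$-shift, which you cannot simply brush aside since the stated form of $C$ distinguishes the first $k-\nu$ coordinates from the last $\nu$). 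After that, your telescoping identity $G_S^\pm\circ C=G_S^\pm$ is indeed correct, but it cannot be reached by your route.

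The second gap is at the finish. Having $G_S^\pm\circ C=G_S^\pm$, you claim that a "now-standard argument" shows $C$ is affine, that comparing $G_S^\pm(Cz)$ with $G_S^\pm(z)$ forces the block-diagonal form with $|\delta_\pm|=1$, and that "this same analysis delivers the commutation of $C$ with $\mathcal S\circ\mathcal T$." None of this is actually argued, and the commutation $C\circ\mathcal S\circ\mathcal T=\mathcal S\circ\mathcal T\circ C$ is exactly the delicate part of the theorem. The paper does it constructively: it builds B\"{o}ttcher coordinates $\phi^\pm_{\cdot,i}$ on modified filtration sectors invariant under $S^{\pm\nu}$, $S^{\mp(k-\nu)}$ (Lemmas~\ref{Step 3}, Proposition~\ref{Step 4}), identifies $\phi_{S,i}^\pm=\phi_{T,i}^\pm$ from $G_S^\pm=G_T^\pm$, derives $\mathcal T\circ\mathcal S=C_1\circ\mathcal S\circ\mathcal T$ and $\mathcal T\circ\mathcal S=\mathcal S\circ\mathcal T\circ C_2$ by asymptotic matching of polynomial coordinates along lines to infinity (Lemmas~\ref{step 5}, \ref{step 6}), and finally proves $C_1=C_2$ by a block-matrix analysis of $A=D(\mathcal S\circ\mathcal T)(0)$ in the relation $D_1A=AD_2$ (Step~7), from which the unimodularity and the uniform values $\delta_-$, $\delta_+$ on the two blocks drop out. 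Your concluding paragraph describes what should happen rather than proving it; as written the proposal does not close the argument.
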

\no	Though, in spirit the proof of Theorem \ref{rigidity of shifts} in the present paper is similar to the proof of Theorem 1.1 in \cite{rigidity}, they deviate significantly from each other. That the Green functions $G_S^\pm$ associated to a shift--like map $S$ is not  necessarily pluriharmonic in $\mathbb{C}^k\setminus K_S^\pm$, is the main reason for this deviation. 
For a single H\'{e}non map $H$, the Green functions $G_H^\pm$ are pluriharmonic in $\mathbb{C}^2\setminus K_H^\pm$ which in turn, gives that the pluricomplex Green functions for the non-escaping sets $K_H^\pm$ are the Green functions $G_H^\pm$. Hence, if we start with a pair of H\'{e}non maps for which the non-escaping sets coincide, then their Green functions also coincide. Since it is not clear whether the pluricomplex Green functions for $K_S^\pm$ are $G_S^\pm$ or not, starting with two shift--like maps with identical non-escaping sets, it is not possible to conclude directly that they have the same Green functions. However, we succeed to show that the two shift--like maps having identical non--escaping sets indeed have identical Green functions using different techniques from \cite{BT} and \cite{DS}.
	
	\medskip 
	\no 
	Another class of maps which we consider in this note, are the skew products of H\'{e}non maps in $\mathbb{C}^3$ of the  form:
	\begin{equation}\label{Dskew-henon}
	H(\lambda,x,y)=(c\lambda, H_\lambda(x,y))
	\end{equation}
	for $(\lambda,x,y)\in \mathbb{C}^3$ with $0\neq c\in \mathbb{C}$. For each $\lambda\in \mathbb{C}$, 
	\begin{equation}\label{desH}
	H_\lambda=H_{m,\lambda} \circ H_{{(m-1)},\lambda}\circ \cdots \circ H_{1,\lambda}
	\end{equation}
	where $H_{j,\lambda}(x,y)=(y,p_{j,\lambda}(y)-\delta_{j} x)$ with $p_{j,\lambda}$  polynomial of degree $d_j\geq 2$ having highest degree coefficient $c_j\neq 0$ and $\delta_j\neq 0$ for $1\leq j \leq m$. Further,  $c_j$'s, $\delta_j$'s and $d_j$'s are independent of $\lambda$.  Further, the coefficients of the polynomial $p_{j,\lambda}$ vary continuously with $\lambda$ for $1\leq j \leq m$.
	 This class of maps first appeared in \cite{FW} in connection to classification of quadratic polynomial automorphisms in $\mathbb{C}^3$ and its dynamics was studied in \cite{FC}.  To address the {\it{rigidity}} property of the Julia sets of these maps,  we first study their dynamics  generalizing the techniques developed in \cite{FC}. 
	
	\medskip 
	\no 
	For each $\lambda\in \mathbb{C}$, let
	$$
	H_\lambda(x,y)=({(H_\lambda)}_1(x,y),{(H_\lambda)}_2(x,y))
	$$
	where the degree of ${(H_\lambda)}_1$ is strictly less than that of ${(H_\lambda)}_2$ and in fact, $\deg {(H_\lambda)}_1$ is  ${d}/{d_m}$ when regarded as a polynomial in $x$ and $y$. Further, let 
	\begin{equation}\label{ABjk}
	x_1^\lambda={(H_\lambda)}_1(x,y)=\sum_{j+k=0}^{{d}/{d_m}} A_{jk}(\lambda) x^j y^k  \text{ and } y_1^\lambda={(H_\lambda)}_2(x,y)=\sum_{j+k=0}^{d} B_{jk}(\lambda) x^j y^k .
	\end{equation}
	In particular,
	\begin{equation*}\label{H1}
	{(H_\lambda)}_2(x,y)=c_H y^d+q_\lambda(x,y)
	\end{equation*}
	where 
	\[
	c_H = \prod_{j=1}^m {c_j}^{d_{j+1}\cdots d_m}
	\]
	with the convention that $d_{j+1}\cdots d_m=1$ when $j=m$, $d=d_m \cdots  d_1$ and $q_\lambda$  a polynomial in $x$ and $y$ of degree strictly less than $d$, for each $\lambda\in \mathbb{C}$. 
	Now  note that
	\begin{equation*}
	H^{-1}(\lambda,x,y)=(c^{-1}\lambda, H_{c^{-1}\lambda}^{-1}(x,y))
	\end{equation*}
	and 
	$$
	H_\lambda^{-1}(x,y)=H_{1,\lambda}^{-1} \circ H_{2,\lambda}^{-1}\circ \cdots \circ H_{m,\lambda}^{-1}(x,y)
	$$
   for each $\lambda\in \mathbb{C}$ and	for all $(x,y)\in \mathbb{C}^2$. Let  
	\begin{equation}\label{abjk}
	\tilde{x}_1^{\lambda}={(H_\lambda^{-1})}_1(x,y)=\sum_{j+k=0}^{d} A_{jk}'(\lambda) x^j y^k  \text{ and } \tilde{y}_1^\lambda={(H_\lambda^{-1})}_2(x,y)=\sum_{j+k=0}^{{d}/{d_1}} B_{jk}'(\lambda) x^j y^k .
	\end{equation}
	In particular,
	\begin{equation*}\label{H-1}
	{(H_\lambda^{-1})}_1(x,y)=c_H' x^d+q_\lambda'(x,y)
	\end{equation*}
	where 
	\[
	c_H'=\prod_{j=1}^m {\left({c_j}\delta_j^{-1}\right)}^{d_{j-1}\cdots d_1} 
	\]
	with the convention that $d_{j-1}\cdots d_1=1$ when $j=1$ and $q_\lambda'$ a polynomial in $x$ and $y$ with degree strictly less than $d$. Further, the maps $\lambda\mapsto A_{jk}(\lambda)$, $\lambda\mapsto B_{jk}(\lambda)$,  $\lambda\mapsto A_{jk}'(\lambda)$ and $\lambda\mapsto B_{jk}'(\lambda)$  are assumed to be continuous in $\mathbb{C}$.
	
	\medskip 
	\no 
	Let $\deg (B_{jk})= l_{jk}$ for $0\leq j+k \leq d$. Define 
	\begin{equation}
	\tilde{d}= \deg H= \max \{ (j+k)+l_{jk}: 0\leq (j+k) \leq d \}.
	\end{equation}
	Consequently,
	\[
	\deg (H^n)= \tilde{d} d^{n-1}
	\]
	for all $n\geq 1$. Further, we fix the following notation:
	\[
	x_n^\lambda={(H_{c^{n-1}\lambda}\circ \cdots \circ H_\lambda) }_1(x,y) \text{ and } y_n^\lambda={(H_{c^{n-1}\lambda}\circ \cdots \circ H_\lambda)}_2(x,y)
	\]
	for all $n\geq 1$ and for each $\lambda\in \mathbb{C}$. Similarly, fix 
	\[
	\tilde{x}_n^\lambda={\left(H_{c^{-n}\lambda}^{-1}\circ \cdots \circ H_{c^{-1}\lambda}^{-1}\right) }_1(x,y) 
\text{ and } \tilde{y}_n^\lambda={\left(H_{c^{-n}\lambda}^{-1}\circ \cdots \circ H_{c^{-1}\lambda}^{-1}\right) }_2(x,y)
	\]
for all $n\geq 1$ and for each $\lambda\in \mathbb{C}$.	

	\medskip 
	\no 
	In Section 3, we study the dynamics of the map of the form (\ref{Dskew-henon}). Depending on modulus of $c$, we choose the sets $V_R^+$ and $V_R^-$ for sufficiently large $R$.   For $\lvert c\rvert >1$,  we define
	\begin{align*}
	V_R^+=\left\{(\lambda,x,y)\in \mathbb{C}^3: \lvert y\rvert > \max \{R, \lvert x \rvert, {\lvert \lambda \rvert}^{\tilde{d}+1}\right\}\\
	V_R^{-}=\left\{(\lambda,x,y)\in \mathbb{C}^3: \lvert x\rvert > \max \{R, \lvert y \rvert\}, \lvert \lambda \rvert <1\right\}.
	\end{align*} 
	For $\lvert c\rvert <1$, the role of $V_R^+$ and $V_R^-$ gets interchanged. For $\lvert c\rvert =1$,  set
	\begin{align*}
	V_R^+=\left \{(\lambda,x,y): \lvert y \rvert > \max \{R, \lvert x \rvert, {\lvert \lambda\rvert}^{\tilde{d}+1}\}\right \}\\
	V_R^-=\left \{(\lambda,x,y): \lvert x \rvert > \max \{R, \lvert y \rvert, {\lvert \lambda\rvert}^{\tilde{d}+1}\}\right \}.
	\end{align*}
	The sets $V_R^\pm$ help to localize the dynamics of the map $H$.
	Define 
	\begin{equation}
	U_H^\pm=\bigcup_{n\geq 1}H^{\mp n}(V_R^\pm) \text{ and } K_H^\pm=\mathbb{C}^3\setminus U_H^\pm.
	\end{equation}
    Further, define the Julia sets (forward and backward) $J_H^\pm=\partial K_H^\pm$. Clearly the sets $K_H^\pm$ and $J_H^\pm$ are invariant under the map $H$.  
	For each $n\geq 1$, define 
	\begin{equation}\label{Green def}
	G_{n, H}^\pm(\lambda,x,y)= \frac{1}{\tilde{d} d^{n-1}} \log^+ \lVert H^{\pm n}(\lambda,x,y)\rVert
	\end{equation}
	for $(\lambda,x,y)\in \mathbb{C}^3$. The sequence of functions $G_{n,H}^\pm$ converge to the functions $G_H^\pm$ uniformly on compacts.  The function $G_H^\pm$ is plurisubharmonic in $\mathbb{C}^3$ and pluriharmonic in $\mathbb{C}^3\setminus K_H^\pm$ satisfying the natural functorial property. Further, $G_H^\pm>0$ in $\mathbb{C}^3\setminus K_H^\pm$  and they vanish precisely on $K_H^\pm$. Properties of Green functions $G_H^\pm$ has been recorded in Theorem \ref{Green_Skew}.
	
	\medskip 
	\no 
	As in the case of single H\'{e}non map, the sets $K_H^\pm$ are not necessarily the collection of the points in $\mathbb{C}^3$ having bounded orbits. For example,
	in case $\lvert c \rvert>1$, for $(\lambda,x,y) \in K_H^+ $ with $\lambda\neq 0$,  $\lvert H^n(\lambda,x,y)\rvert \rightarrow \infty$ as $n\rightarrow \infty$.  If $K_H^+\subseteq \{\lambda=0\}$, then since $G_H^+$ is continuous in the whole $\mathbb{C}^3$ and $G_H^+$ is pluriharmonic away from $K_H^+$ , by removable singularity theorem $G_H^+$ can be extended as pluriharmonic function in whole $\mathbb{C}^3$. Now $G_H^+>0$ in $U_H^+$. Thus $G_H^+$ is identically constant in $\mathbb{C}^3$. This shows that, in this case $K_H^+$ always contain a point in $\mathbb{C}^3$ with unbounded orbit. In Lemmas \ref{estK+} and \ref{estK-}, we give an estimate of the growth rate of a point in $K_H^\pm $.

	\medskip 
	\no 
	We now state the {\it{rigidity}} theorem for skew products of H\'{e}non maps fibered over non-compact parameter space $\mathbb{C}$.
	\begin{thm}\label{skew-ncompact}
		Let $H$ and $F$ be two skew products of H\'{e}non maps in $\mathbb{C}^3$. Further, assume that Julia sets of $H$ and $F$ are the same, i.e.,  $J_{H}^\pm= J_{F}^\pm$. Then, 
		\begin{equation*}
		F\circ H= \gamma \circ H \circ F 
		\end{equation*}
		for some $\gamma:(\lambda, x,y)= (\delta \lambda, \delta_+ x, \delta_- y)$ with $0\neq \delta \in \mathbb{C}$ and  $\lvert \delta_\pm \rvert=1$.
	\end{thm}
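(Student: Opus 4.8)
The plan is to imitate, at the structural level, the proof of Theorem~1.1 of \cite{rigidity} (and hence Beardon's theorem), using crucially the feature -- recorded before the statement -- that for a skew product $H$ of H\'enon maps the Green functions $G_H^{\pm}$ \emph{are} pluriharmonic on $\mathbb{C}^3\setminus K_H^{\pm}$. First I would promote $J_H^{\pm}=J_F^{\pm}$ to $K_H^{\pm}=K_F^{\pm}$: the escaping set $U_H^{\pm}=\mathbb{C}^3\setminus K_H^{\pm}=\bigcup_n H^{\mp n}(V_R^{\pm})$ is an increasing union of connected open sets, hence open and connected, with $J_H^{\pm}=\partial U_H^{\pm}$; since $G_F^{\pm}>0$ on $V_R^{\pm}$, the connected set $U_H^{\pm}$, being disjoint from $\partial U_F^{\pm}=\partial U_H^{\pm}$, lies inside $U_F^{\pm}$, and by symmetry $U_H^{\pm}=U_F^{\pm}$. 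Then $G_H^{\pm}=G_F^{\pm}=:G^{\pm}$: as for a single H\'enon map, both equal the pluricomplex Green function of the common set $K_H^{\pm}=K_F^{\pm}$, an identification that uses precisely that $G_H^{\pm}$ is plurisubharmonic on $\mathbb{C}^3$, maximal off $K_H^{\pm}$, and of the correct logarithmic growth (Theorem~\ref{Green_Skew}) -- the very property that fails, and must be worked around, for the shift-like maps of Theorem~\ref{rigidity of shifts}.

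Now put $\gamma:=F\circ H\circ F^{-1}\circ H^{-1}$, so that $F\circ H=\gamma\circ H\circ F$ holds by definition and it remains to identify $\gamma$. Since $G^{\pm}$ is the Green function of both $H$ and $F$, we have the functorial relations $G^{+}\circ H=d_H\,G^{+}$, $G^{-}\circ H=d_H^{-1}G^{-}$, $G^{+}\circ F=d_F\,G^{+}$, $G^{-}\circ F=d_F^{-1}G^{-}$, and telescoping these along the word $FHF^{-1}H^{-1}$ gives $G^{+}\circ\gamma=G^{+}$ and $G^{-}\circ\gamma=G^{-}$; in particular $\gamma$ preserves $K^{\pm}$, $J^{\pm}$ and the currents $dd^c G^{\pm}$. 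Moreover $H$ and $F$ are skew products over base maps $\lambda\mapsto c\lambda$, so their commutator is the identity on the base: $\gamma(\lambda,x,y)=(\lambda,\gamma_{\lambda}(x,y))$ with $\gamma_{\lambda}\in\operatorname{Aut}(\mathbb{C}^2)$, and restricting to a fibre $\{\lambda\}\times\mathbb{C}^2$ produces functions $G_{\lambda}^{\pm}$ that are plurisubharmonic, pluriharmonic off $K_{\lambda}^{\pm}$, of logarithmic growth, and $\gamma_{\lambda}$-invariant.

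The core of the argument is to show $\gamma_{\lambda}(x,y)=(\delta_{+}x,\delta_{-}y)$ with $\lvert\delta_{\pm}\rvert=1$. I would first rule out that $\gamma_{\lambda}$ is elementary or of H\'enon type in the Friedland--Milnor / Jung classification of $\operatorname{Aut}(\mathbb{C}^2)$: in either case there is a nonempty open set on which $\gamma_{\lambda}^{n}$ or $\gamma_{\lambda}^{-n}$ tends to infinity inside the fibres of $V_R^{\pm}$, where $G_{\lambda}^{+}$ is either comparable to $\log\lVert\cdot\rVert$ or identically zero, contradicting the boundedness of $G_{\lambda}^{+}\circ\gamma_{\lambda}^{\pm n}=G_{\lambda}^{+}$; hence $\gamma_{\lambda}$ is affine. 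Invariance of $G_{\lambda}^{\pm}$ forces the projective extension of $\gamma_{\lambda}$ to $\mathbb{P}^2$ to fix both escape directions $[0{:}1{:}0]$ and $[1{:}0{:}0]$, so $\gamma_{\lambda}$ is diagonal up to a translation; repeating the iteration argument for $\gamma_{\lambda}^{\pm n}$ makes the diagonal entries unimodular; comparing top-degree coefficients on the two sides of $F\circ H=\gamma\circ H\circ F$, together with the hypothesis that the leading coefficients of $H$ and $F$ do not depend on $\lambda$, shows these entries are $\lambda$-independent constants $\delta_{\pm}$; and finally the translation part -- which is pinned down by the same comparison once $\gamma$ is known to be fibre-preserving and affine -- vanishes because $\gamma$ preserves the compact fibrewise filled Julia set $K^{+}\cap K^{-}\cap(\{\lambda\}\times\mathbb{C}^2)$. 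Since the base component of $\gamma$ is the identity, this gives $\gamma(\lambda,x,y)=(\delta_{+}x,\delta_{-}y)$ with $\lvert\delta_{\pm}\rvert=1$ (the stated form, with $\delta=1$), and $F\circ H=\gamma\circ H\circ F$ by construction.

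I expect the main obstacle to be this last paragraph -- excluding a nonlinear $\gamma_{\lambda}$ and then removing the translation -- because the fibrewise functions $G_{\lambda}^{\pm}$ need not be the Green functions of any single H\'enon map, so one must work only with the properties they inherit: pluriharmonicity off $K_{\lambda}^{\pm}$, the sharp asymptotics of $G^{\pm}$ inside $V_R^{\pm}$, and compactness of the fibrewise filled Julia set, which in turn rests on the growth estimates of Lemmas~\ref{estK+} and~\ref{estK-}. By contrast, the step transferring $J^{\pm}$-equality to $G^{\pm}$-equality is routine here precisely because -- in sharp contrast with the shift-like maps of Theorem~\ref{rigidity of shifts} -- $G_H^{\pm}$ is pluriharmonic off $K_H^{\pm}$.
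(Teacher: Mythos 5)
Your plan takes a genuinely different route from the paper, and in its present form it has a gap that the paper's route is specifically designed to avoid.

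The paper's proof is short precisely because it has already built the needed machinery. After deducing $G_H^\pm=G_F^\pm$ from Proposition~\ref{pluricomp}, it invokes the B\"ottcher coordinates $\phi_H^\pm,\phi_F^\pm$ of Proposition~\ref{Bot}, which live on $V_R^\pm$, conjugate $H$ (resp.\ $F$) to a pure power map, and satisfy $\phi^\pm_H(\lambda,x,y)\sim y$ or $\sim x$ at infinity. The identities (\ref{G+phi})--(\ref{G-phi}) then turn the equality of Green functions into the equality of B\"ottcher coordinates up to a known constant of modulus one, and the asymptotic--comparison trick of Step~5 of Theorem~\ref{rigidity of shifts} (two polynomials whose difference tends to $0$ along an escaping line must agree identically) yields $(F\circ H)_i=\delta^\pm\,(H\circ F)_i$ in $\mathbb{C}^3$ outright, with no classification of automorphisms and no compactness input. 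You do not use B\"ottcher coordinates at all; instead you set $\gamma:=F\circ H\circ F^{-1}\circ H^{-1}$, observe it acts fibrewise, and try to identify each $\gamma_\lambda\in\operatorname{Aut}(\mathbb{C}^2)$ by running it against the fibrewise invariants. That is a legitimate strategy in principle, but it is not the paper's, and it leaves you holding exactly the hard part.

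Three specific concerns. First, the exclusion of H\'enon-type or elementary $\gamma_\lambda$ via Friedland--Milnor/Jung is asserted, not proved: you need that the escaping set of $\gamma_\lambda$ meets the locus $\{G_\lambda^+>0\}$, and that $G_\lambda^+$ has the growth you claim along $\gamma_\lambda$-orbits; nothing in the paper hands you this, because the paper never needs to analyse $\gamma$ as an abstract automorphism. Second, and more seriously, your removal of the translation rests on the claim that $K_H^+\cap K_H^-\cap(\{\lambda\}\times\mathbb{C}^2)$ is compact. For $\lvert c\rvert>1$ the set $V_R^-$ carries the extra condition $\lvert\lambda\rvert<1$, so for $\lvert\lambda\rvert\geq 1$ the first iterate gives no a priori bound, and the paper explicitly warns (just before Theorem~\ref{skew-ncompact}) that $K_H^\pm$ are \emph{not} the bounded-orbit sets and that points in $K_H^+$ with $\lambda\neq0$ have unbounded forward orbits. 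So the compactness you rely on is not established and is dubious for $\lvert\lambda\rvert$ large; you would at least have to argue it on $\{\lvert\lambda\rvert<1\}$ and propagate by analyticity of $\gamma$, which you do not do. Third, even granting that each $\gamma_\lambda$ is diagonal and unimodular, you must show the entries are independent of $\lambda$; the paper gets this for free because the B\"ottcher coordinates have $\lambda$-independent leading coefficients $c_H,c_H'$ (a standing hypothesis of the class), whereas in your setup it requires a separate argument via coefficient comparison that you only gesture at. In short, the route you chose shifts the burden to claims about $\operatorname{Aut}(\mathbb{C}^2)$ and fibrewise compactness that are neither obvious nor supplied, while the paper's B\"ottcher-coordinate route sidesteps all of them.
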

	
In the same spirit as above, we can give a {\it{rigidity}}  theorem  for skew-products of H\'{e}non maps fibered over a compact metric space. We conclude this note after discussing the proof of this theorem.

\section{Rigidity of Julia sets of shift--like maps}
\no 
In this section, we first prove a uniqueness result for locally bounded,  plurisubharmonic functions having at most logarithmic growth. It is essentially a modification to Theorem 1 from \cite{BT}. Let
\[
\mathcal{L}=\{u \in L^\infty_{loc}(\mathbb{C}^k)\cap {\cal PSH}(\mbb C^k): u(z)\leq \log(1+\lvert z \rvert)+M \}
\] 
\begin{lem} \label{diff}
	Let $u,v \in \mathcal{L}$ such that	  
	\begin{equation}\label{given}
	{(dd^c u)}^{k-p}={(dd^c v)}^{k-p}
	\end{equation}
	for some $1\leq p\leq (k-1)$, then $u$ and $v$ differ by a constant in $\mathbb{C}^k$.
\end{lem}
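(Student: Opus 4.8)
Put $w=u-v$. Since $u,v\in L^\infty_{loc}(\mathbb C^k)\cap\mathcal{PSH}(\mathbb C^k)$, $w$ is a locally bounded difference of plurisubharmonic functions, so every Bedford--Taylor mixed product below is well defined. The plan is to adapt the uniqueness argument of \cite{BT}; the two new features are that the Monge--Amp\`ere operator here has order $k-p<k$ and that the argument is run globally. The first step is the telescoping identity
\begin{equation*}
(dd^c u)^{k-p}-(dd^c v)^{k-p}=dd^c w\wedge T,\qquad
T:=\sum_{i=0}^{k-p-1}(dd^c u)^i\wedge(dd^c v)^{\,k-p-1-i}\ \ge\ 0,
\end{equation*}
valid since $dd^c u$ and $dd^c v$ are closed positive currents; here $T$ is closed of bidegree $(k-p-1,k-p-1)$, with the convention that $T\equiv1$ when $p=k-1$. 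Thus hypothesis \eqref{given} is equivalent to $dd^c w\wedge T=0$. If $p=k-1$ this already says $dd^c w=0$, so $w$ is pluriharmonic on $\mathbb C^k$ and one is reduced to the last step below; for general $p$ the task is to force $dd^c w=0$ despite $T$ being a possibly very degenerate positive current.

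For the energy step I would pass to $\mathbb P^k$. The growth condition defining $\mathcal L$ makes $\hat u:=u-\tfrac12\log(1+\lvert z\rvert^2)$ and $\hat v:=v-\tfrac12\log(1+\lvert z\rvert^2)$ bounded $\omega$-plurisubharmonic on $\mathbb C^k$ (with $\omega=\omega_{FS}$), so they extend across the hyperplane at infinity to bounded $\omega$-plurisubharmonic functions on $\mathbb P^k$; then $w=\hat u-\hat v$ is bounded on $\mathbb P^k$, and, since Monge--Amp\`ere currents of bounded potentials put no mass on the pluripolar hyperplane at infinity, the hypothesis becomes $\omega_{\hat u}^{\,k-p}=\omega_{\hat v}^{\,k-p}$ on all of $\mathbb P^k$, where $\omega_{\hat u}=\omega+dd^c\hat u$ and $\omega_{\hat v}=\omega+dd^c\hat v$. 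On the compact manifold $\mathbb P^k$ there is no boundary term, so Stokes' theorem gives
\begin{equation*}
0=\int_{\mathbb P^k}w\,\bigl(\omega_{\hat u}^{\,k-p}-\omega_{\hat v}^{\,k-p}\bigr)\wedge\omega^p
=\int_{\mathbb P^k}w\,dd^c w\wedge T\wedge\omega^p
=-\int_{\mathbb P^k}dw\wedge d^c w\wedge T\wedge\omega^p,
\end{equation*}
where now $T=\sum_{i=0}^{k-p-1}\omega_{\hat u}^{\,i}\wedge\omega_{\hat v}^{\,k-p-1-i}\ge0$ is closed of finite mass. The last integrand is a nonnegative measure, hence $0$; by positivity each summand $dw\wedge d^c w\wedge\omega_{\hat u}^{\,i}\wedge\omega_{\hat v}^{\,k-p-1-i}\wedge\omega^p$ vanishes, and a further Cauchy--Schwarz followed by an application of $\bar\partial$ yields $dd^c w\wedge\omega_{\hat u}^{\,i}\wedge\omega_{\hat v}^{\,k-p-1-i}\wedge\omega^p=0$ for every $i$.

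The heart of the argument --- and the step I expect to be the main obstacle --- is to descend from these mixed identities to $dd^c w\wedge\omega^{k-1}=0$. The idea is an induction downward on the total degree: starting from $dd^c w\wedge\omega_{\hat u}^{\,k-p-1}\wedge\omega^p=0$ and its mixed analogues, one re-runs the energy scheme with one of the factors $\omega_{\hat u}$ or $\omega_{\hat v}$ replaced by $\omega$, peeling off one positive factor at each stage, until no Monge--Amp\`ere factor remains. The delicate point is that the currents being wedged against are not controlled pointwise and may be highly degenerate, so to see that no information is lost at each step one must regularize and invoke the continuity and structure properties of Bedford--Taylor products together with the methods of \cite{DS}. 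Granting this, $dd^c w\wedge\omega^{k-1}=0$ means that the trace of $dd^c w$ vanishes, i.e.\ $w$ is harmonic on the compact manifold $\mathbb P^k$ (being bounded with finite Dirichlet energy), hence constant. Therefore $u$ and $v$ differ by a constant on $\mathbb C^k$.
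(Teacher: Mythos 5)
Your overall plan is structurally the same as the paper's: set $w=u-v$, use the telescoping factorization
$(dd^c u)^{k-p}-(dd^c v)^{k-p}=dd^c w\wedge T$
with $T=\sum_{i+j=k-p-1}(dd^c u)^i\wedge(dd^c v)^j$, integrate by parts against a strongly plurisubharmonic weight, and then descend inductively (trading the $dd^c u$ and $dd^c v$ factors for powers of the weight) until you reach $dw\wedge d^c w\wedge\omega^{k-1}=0$. The paper does exactly this, but entirely on $\mathbb C^k$: at each step it forms a current whose exterior derivative is an exact positive $(k,k)$-current of finite mass (finiteness coming from the logarithmic growth bound, as in \cite{BT}), and concludes it vanishes. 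Your substitute for that step --- compactifying to $\mathbb P^k$ and invoking Stokes with no boundary term --- is a reasonable idea, but it is where your argument has a real gap.

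The gap is the claim that $\hat u=u-\tfrac12\log(1+|z|^2)$ and $\hat v$ are \emph{bounded} $\omega$-plurisubharmonic. The class $\mathcal L$ imposes only an upper growth bound $u(z)\le\log(1+|z|)+M$ and local boundedness on $\mathbb C^k$; there is no lower bound at infinity. For example $u\equiv 0$ lies in $\mathcal L$, and then $\hat u=-\tfrac12\log(1+|z|^2)\to-\infty$ as $|z|\to\infty$; more to the point, in the intended application $u=G_S^+$ vanishes on the unbounded set $K_S^+$, so $\hat u\to-\infty$ along it. Consequently $\hat u$, $\hat v$ are only bounded \emph{above} near the hyperplane at infinity, and your crucial assertion "Monge--Amp\`ere currents of bounded potentials put no mass on the pluripolar hyperplane at infinity" does not apply. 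Without it, the identity $\omega_{\hat u}^{k-p}=\omega_{\hat v}^{k-p}$ on $\mathbb P^k$ (i.e., with no charge on $\{z_0=0\}$) and the Stokes computation with $w$ unbounded both require justification --- essentially you would need $\hat u,\hat v$ to lie in a Guedj--Zeriahi--type class $\mathcal E$ and $w$ to be integrable against the mixed measures, none of which is established. Separately, you explicitly leave the descent (replacing $\omega_{\hat u},\omega_{\hat v}$ by $\omega$ one factor at a time, via Cauchy--Schwarz) as a "granting this" step, but that descent is the core of the argument and the paper does carry it out. So the skeleton matches the paper, but two load-bearing steps --- boundedness on $\mathbb P^k$ and the inductive descent --- are missing or wrong.
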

\begin{proof}
	Let
	\[
	\rho=u-v
	\]
	and let $L$ be any strongly plurisubharmonic function in $\mathbb{C}^k$. We will show that 
	\[
	d\rho \wedge d^c \rho \wedge {(dd^c L)}^{k-1}=0
	\]
	which in turn gives that $d\rho =0$ almost everywhere and hence $\rho$ is a constant. 
	
	\medskip\no 
	We prove by induction that for $q=0,\ldots, (k-p-1)$
	\begin{equation}\label{inductionL}
	d\rho \wedge d^c \rho \wedge  {(dd^c L)}^{q+p} \wedge {(dd^c u)}^i \wedge {(dd^c v)}^j=0
	\end{equation} 
	on $\mathbb{C}^k$ where $i+j=k-q-p-1$.
	
	\medskip\no We prove (\ref{inductionL}) for $q=0$ first i.e., we prove that 
	\begin{equation} \label{ind fst}
	d\rho \wedge d^c \rho \wedge {(dd^c L)}^p \wedge {(dd^c u)}^i  \wedge {(dd^c v)}^j=0
	\end{equation}
	for $i+j=k-p-1$. Let 
	\[
	\Theta={(dd^c u)}^{k-p-1}+ {(dd^c u)}^{k-p-2} \wedge dd^c v + \cdots +{(dd^c v)}^{k-p-1}.
	\]
	Using (\ref{given}), we have that 
	\begin{equation}
	dd^c \rho \wedge \Theta = 0.
	\end{equation}
	Let 
	\[
	T=\rho d^c \rho \wedge \Theta \wedge {(dd^c L)}^p,
	\]
	then 
	\[
	dT= d\rho \wedge d^c \rho \wedge \Theta \wedge  {(dd^c L)}^p.
	\]
	is an exact positive $(k,k)-$current and thus $dT=0$ which in turn gives (\ref{ind fst}).
	
	\medskip 
	\no 
	Suppose inductively, we have
	\begin{equation} \label{ind}
	d\rho \wedge d^c \rho \wedge {(dd^c L)}^{p+q} \wedge {(dd^c u)}^i  \wedge {(dd^c v)}^j=0
	\end{equation}
	with $i+j=k-p-q-1$. We will be done if we prove that 
	\begin{equation}\label{ind step} 
	d\rho \wedge d^c \rho \wedge {(dd^c L)}^{p+q+1} \wedge {(dd^c u)}^i  \wedge {(dd^c v)}^j=0
	\end{equation}
	with $i+j=k-p-q-2$. Using Schwarz inequality (\ref{ind}) gives that
	\begin{equation} \label{Sch}
	d\rho \wedge {(dd^c L)}^{p+q} \wedge {(dd^c u)}^i  \wedge {(dd^c v)}^j=0
	\end{equation}
	and 
	\begin{equation*} 
	d^c \rho \wedge {(dd^c L)}^{p+q} \wedge {(dd^c u)}^i  \wedge {(dd^c v)}^j=0.
	\end{equation*}
	Let
	\begin{equation*} 
	T=d\rho \wedge d^c \rho \wedge {(dd^c L)}^{p+q} \wedge {(dd^c u)}^i  \wedge {(dd^c v)}^j
	\end{equation*}
	with $i+j=k-p-q-2$. Let 
	\[
	U=d^c L \wedge T.
	\]
	Note that by using (\ref{Sch}), it follows that
	\[
	dT= d\rho \wedge dd^c \rho \wedge {(dd^c L)}^{p+q} \wedge {(dd^c u)}^i  \wedge {(dd^c v)}^j=0
	\]
	Hence,
	\[
	dU=dd^c L \wedge T.
	\]
	As before, $dU$ is an exact, positive, $(k,k)$-current and thus 
	\[
	dU=0.
	\]  
	This proves (\ref{ind step}). Thus we get that $u$ and $v$ differ by a constant in $\mathbb{C}^k$. 
\end{proof}

\no 

\subsection{Proof of Theorem \ref{rigidity of shifts}} 
Recall that a shift--like map $S$ of type $\nu$ ( $1\le \nu \le k-1$) is a map of the form:
\begin{align}\label{Shift S}
 S(z_1,\hdots,z_k)=(z_2,\hdots,z_k,az_1+p(z_{k-\nu+1})
 \end{align} where $a \in \mbb C^*$ and $p$ is a polynomial of degree $d \ge 2.$ From Proposition 2.4 in \cite{mypaper} the $\nu(k-\nu)-$th iterate of a shift--like map $S$ is regular. The positive and negative Green functions of $S$ are defined as:
\[ G_S^+(z)=\lim_{n \to \infty}\frac{\log^+\|S^{\nu n}(z)\|}{d^n} \text{ and }G_S^-(z)=\lim_{n \to \infty}\frac{\log^+\|S^{-(k-\nu)n}(z)\|}{d^n}.\]
Further from Theorem 8.7 in \cite{DS},
\[\mu_S^+=\Big(\frac{1}{2\pi}dd^c G_S^+\big)^{\nu} \text{ and }\mu_S^-=\Big(\frac{1}{2\pi}dd^c G_S^-\Big)^{k-\nu}\] 
are unique currents of mass $1$ supported in $K_S^+$ and $K_S^-$ respectively.

\medskip\no \textit{Step 1: } $T$ is a $\nu-$shift. Also $G_S^\pm=G_T^\pm$ and $K_S^\pm=K_T^\pm.$

\medskip\no Let $T$ be a $\nu_1-$shift such that $1 \le \nu_1 \neq \nu \le k-1.$ Recall from \cite{BP}, the filtration properties of the maps $S$ and $T$. For $R>0$ we define the sets
\[ V_{R,S}^+=\bigcup_{i=k-\nu+1}^k V^i_R \text{ and } V_{R,S}^{-}=\bigcup_{i=1}^{k-\nu} V^i_R\] and 
\[ V_{R,T}^+=\bigcup_{i=k-\nu_1+1}^k V^i_R \text{ and } V_{R,T}^{-}=\bigcup_{i=1}^{k-\nu_1} V^i_R.\]
Also, $$K_S^\pm \subset V_R \cup V_{R,S}^\mp , \text{ and } K_T^\pm\subset V_R \cup V_{R,T}^\mp $$ for a sufficiently large $R > 0.$ 

\medskip\no Let $\nu_1 < \nu.$ Then $V_{R,T}^+ \subset V_{R,S}^+$ and $V_{R,S}^- \subset V_{R,T}^-.$ Pick a $z \in K_S^+.$ By assumption $$T^{n}(z) \in K_S^+ \subset V_R \cup V_{R,S}^-\subset V_R \cup V_{R,T}^-,$$ i.e., $z \in K_T^+.$ Thus $K_S^+ \subset K_T^+.$

\medskip\no Let $\ov{K_T^\pm}=K_T^\pm \cup I^\pm_T \subset \mbb P^k$ and $\ov{K_S^\pm}=K_S^\pm \cup I^\pm_S \subset \mbb P^k$ where $I^\pm_S$ and $I^\pm_T$ are indeterminacy sets of $S$ and $T$ respectively. Since $\nu_1 < \nu$, by Proposition 2.1 in \cite{mypaper} it follows that $I_S^+ \subset I_T^+$ and $I_T^- \subset I_S^-.$  Now from Theorem 8.7 in \cite{DS} $\mu_T^+$ is the unique $(\nu_1,\nu_1)-$closed current supported in $\ov{K_T^+}$ and $\mu_S^+$ is the unique $(\nu,\nu)-$closed current of mass 1 supported in $\ov{K_S^+}.$ Further from Corollary 8.5 in \cite{DS}
$\tilde{\mu}_S=(\frac{1}{2\pi}dd^c G_S^+)^{\nu_1}$ is a $(\nu_1,\nu_1)-$closed current of mass $1$ supported in $\ov{K_S^+}$, hence in $\ov{K_T^+}.$ Since $(\nu_1,\nu_1)-$closed current of mass 1 supported in $\ov{K_T^+}$ is unique, 
\[ \tilde{\mu}_S=\mu_T^+, \text{ i.e., } \Big(\frac{1}{2 \pi}dd^c G_S^+\Big)^{\nu_1}=\Big(\frac{1}{2 \pi}dd^c G_T^+\Big)^{\nu_1}.\]
Applying Lemma \ref{diff}, we have that $G_S^+=G_T^+$ which in turn gives $K_S^+=K_T^+.$ As $K_T^+ \subset V_R \cup V_{R,S}^-$ it follows that $K_T^+ \cap V_{R}^{k-\nu_1}=\emptyset.$ But from Proposition 2.1 in \cite{mypaper} and Proposition 8.3 in \cite{DS}, it follows that $$P_0=[\underbrace{0:\cdots:0:1:0\cdots:0}_{k-\nu_1-\text{th position}}] \in I_T^+ \subset \ov{K_T^+}, \text{ i.e., } K_T^+ \cap V_{R}^{k-\nu_1} \neq \emptyset.$$ Hence $\nu_1 \not \le \nu.$ A similar argument applied for $T^{-1}$ on $K_S^-$ and $K_T^-$ gives $\nu \not \le \nu_1.$ Thus $\nu_1=\nu.$

\medskip\no Assuming $\nu_1=\nu$, exactly the same arguments as above gives $K_S^+ \subset K_T^+$ and $K_S^- \subset K_T^-.$ Since $\mu_T^\pm$ and $\mu_S^\pm$ are unique currents of mass 1 supported in $K_T^\pm$ and $K_S^\pm$ respectively,
\[ \mu_S^+=\mu_T^+ \text{ and } \mu_S^-=\mu_T^-,\]
i.e., \[ (dd^c G_S^+)^\nu=(dd^c G_T^+)^\nu \text{ and } (dd^c G_S^-)^{k-\nu}=(dd^c G_T^-)^{k-\nu}.\]
Hence by Lemma \ref{diff}, $G_S^\pm=G_T^\pm$ and $K_S^\pm=K_T^\pm.$ 

\medskip\no Similar to the form of $S$ in (\ref{Shift S}) let the form of $T$ be 
\[ T(z_1,\hdots,z_k)=(z_2,\hdots,z_k,bz_1+q(z_{k-\nu+1}))\] where $b \in \mbb C^*$ and $q$ a polynomial of degree $d_q  \ge 2.$ Let 
\[ p(z)=\sum_{i=0}^{d_p} c_i z^i \text{ and } q(z)=\sum_{i=0}^{d_q} c'_i z^i.\]
For $\ep>0$ we define the modified sectors $V_i^R(\ep)$, $1 \le i \le k$ as:
\[ V_i^R(\ep)=\Big\{ z \in \mbb C^k \setminus V_R: |z_i|>\max\{|z_j|+\ep, R: 1 \le j \neq i \le k\} \Big\}.\]
\begin{figure}[h!]
	\includegraphics[scale=0.4]{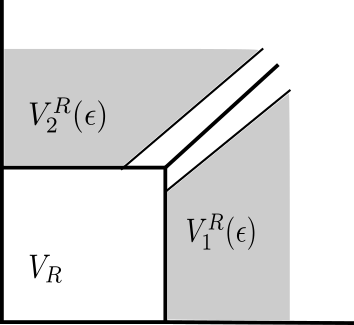}
	\caption{The sets $V_1^R(\ep)$ and $V_2^R(\ep)$ in $\mbb C^2$}
\end{figure}
\textit{Step 2: }There exist modified sectors such that they are invariant under appropriate iterates of $S$ and $T$ or $S^{-1}$ and $T^{-1}.$ 
\begin{lem}\label{Step 3} 
	Corresponding to the shift--like maps $S$ and $T$ (of type $\nu$) there exist $R_0>0$ and $\ep_0>0$ ($\ep_0 \ll R_0$) such that 
	\begin{itemize}
		\item [(i)] For every $k-\nu+1 \le i \le k$,
		$$S^\nu\big (V_i^{R}(\ep_0) \big )\subset V_i^{R}(\ep_0)\text{ and }T^\nu \big (V_i^{R}(\ep_0)\big ) \subset V_i^{R}(\ep_0),$$ whenever $R > R_0.$
		\item [(ii)] For every $1 \le j \le k-\nu$,
		$$S^{-(k-\nu)}\big(V_j^{R}(\ep_0)\big) \subset V_j^{R}(\ep_0)\text{ and }T^{-(k-\nu)}\big(V_j^{R}(\ep_0)\big) \subset V_j^{R}(\ep_0),$$ whenever $R > R_0.$
	\end{itemize}	
\end{lem}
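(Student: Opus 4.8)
The plan is to reduce the statement to two ingredients: an explicit formula for the $\nu$-th iterate of a $\nu$-shift, and a single ``gap estimate'' for the leading term of the defining polynomials. Part (ii) will then follow from part (i) by conjugating with the coordinate-reversing involution $\sigma(z_1,\hdots,z_k)=(z_k,\hdots,z_1)$, which carries $S^{-1}$ to a shift.

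\emph{The iterate and the gap estimate.} Iterating the defining relation of a $\nu$-shift exactly $\nu$ times gives
\[
S^\nu(z_1,\hdots,z_k)=\big(z_{\nu+1},\hdots,z_k,\ az_1+p(z_{k-\nu+1}),\ \hdots,\ az_\nu+p(z_k)\big);
\]
equivalently, the $l$-th coordinate of $S^\nu(z)$ is $z_{l+\nu}$ when $1\le l\le k-\nu$ and is $az_{\,l-(k-\nu)}+p(z_l)$ when $k-\nu+1\le l\le k$, where $l-(k-\nu)\neq l$ since $\nu\neq k$; the same holds for $T^\nu$ with $(a,p)$ replaced by $(b,q)$. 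Writing $C_p=\sum_{i<d_p}|c_i|$ (and $C_q$ analogously), so that $\big||p(z)|-|c_{d_p}||z|^{d_p}\big|\le C_p|z|^{d_p-1}$ for $|z|\ge1$, the estimate I would establish is: there is $\ep^\ast>0$, depending only on $a,b$ and the coefficients of $p,q$, such that for every $\ep\ge\ep^\ast$ and every $R$ large relative to $\ep$,
\[
|p(u)|-|p(v)|-2|a|\,|u|\ \ge\ \ep\qquad\text{whenever }|u|\ge R\ \text{ and }\ |u|\ge|v|+\ep,
\]
and likewise for $(q,b)$. Indeed $|u|^{d_p}-|v|^{d_p}\ge d_p\ep(|u|-\ep)^{d_p-1}\ge d_p 2^{1-d_p}\ep\,|u|^{d_p-1}$ once $|u|\ge2\ep$, so the left side above is at least $\big(|c_{d_p}|d_p2^{1-d_p}\ep-2C_p-2|a|\big)|u|^{d_p-1}$ (using $|v|\le|u|$ and $|u|^{d_p-1}\ge|u|\ge1$), which exceeds $|u|^{d_p-1}\ge R^{d_p-1}\ge\ep$ once $\ep\ge\ep^\ast$ makes the bracket $\ge1$ and $R\gg\ep$. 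I expect this calibration — that $\ep_0$ must be taken \emph{comparable to the subleading data of $p,q$} rather than infinitesimal, with $R_0\gg\ep_0$ chosen only afterwards — to be the one genuine subtlety; it reflects that for $\nu\ge2$ the iterate $S^\nu$ has several mutually competing ``polynomial'' coordinates, unlike a single H\'enon map.

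\emph{Proof of (i).} Fix $i\in\{k-\nu+1,\hdots,k\}$ and $z\in V_i^R(\ep_0)$, so $|z_i|>R$ and $|z_i|\ge|z_j|+\ep_0$ for all $j\neq i$; put $w=S^\nu(z)$. Its $i$-th coordinate is $az_{i'}+p(z_i)$ with $i'=i-(k-\nu)\neq i$, hence $|z_{i'}|<|z_i|$ and, for $R>R_0$, $|w_i|\ge|p(z_i)|-|a||z_i|\ge\frac12|c_{d_p}||z_i|^{d_p}>R$. For $l\neq i$: if $l\le k-\nu$ then $w_l=z_{l+\nu}$, $|w_l|\le|z_i|$, so $|w_i|-|w_l|\ge\frac12|c_{d_p}||z_i|^{d_p}-|z_i|>\ep_0$; if $l\ge k-\nu+1$ then $w_l=az_{l'}+p(z_l)$ with $l\neq i$, so $|z_l|\le|z_i|-\ep_0$, $|w_l|\le|a||z_i|+|p(z_l)|$, and the gap estimate (with $u=z_i$, $v=z_l$) gives $|w_i|-|w_l|\ge|p(z_i)|-|p(z_l)|-2|a||z_i|\ge\ep_0$. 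Hence $w\in V_i^R(\ep_0)$, i.e. $S^\nu(V_i^R(\ep_0))\subset V_i^R(\ep_0)$; the same argument with $(b,q,c'_{d_q})$ handles $T^\nu$.

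\emph{Proof of (ii), and conclusion.} A direct computation gives $S^{-1}(w_1,\hdots,w_k)=\big(a^{-1}(w_k-p(w_{k-\nu})),w_1,\hdots,w_{k-1}\big)$, so $\sigma\circ S^{-1}\circ\sigma$ is a shift--like map of type $k-\nu$ with linear coefficient $a^{-1}$ and polynomial $-a^{-1}p$ (of the same degree), and likewise $\sigma\circ T^{-1}\circ\sigma$. Applying part (i) to this pair — with indices now in $\{\nu+1,\hdots,k\}$ — yields $R_0',\ep_0'$ with $(\sigma S^{-1}\sigma)^{k-\nu}(V_i^R(\ep_0'))\subset V_i^R(\ep_0')$ for $R>R_0'$; since $(\sigma S^{-1}\sigma)^{k-\nu}=\sigma\circ S^{-(k-\nu)}\circ\sigma$ and $\sigma(V_i^R(\ep))=V_{k-i+1}^R(\ep)$, conjugating back and setting $j=k-i+1\in\{1,\hdots,k-\nu\}$ gives $S^{-(k-\nu)}(V_j^R(\ep_0'))\subset V_j^R(\ep_0')$, and likewise for $T$. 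Finally, only the four maps $S^\nu$, $T^\nu$ and the two flipped inverses occur, and the gap estimate only improves with larger $\ep$, so taking $\ep_0$ to be the largest of the resulting thresholds and $R_0$ correspondingly large makes (i) and (ii) hold simultaneously for $S$ and $T$.
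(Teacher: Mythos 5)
Your argument is correct, and its core is the same as the paper's: reduce the lemma to a single ``gap estimate'' for the defining polynomial, then feed it into the explicit coordinate formulas for the $\nu$-th iterate of a $\nu$-shift. Your formulation of the gap estimate — absorbing the linear term $2|a|\,|u|$ into the statement and tracking $R$ and $\ep$ cleanly against one another — is tighter than the paper's (which uses the bound $(|c_{d_p}|-\ep)|z|^{d_p}\le|p(z)|\le(|c_{d_p}|+\ep)|z|^{d_p}$ and a separate constant $M=\max\{|a|,|b|,1\}$), and you are right to flag the calibration $\ep_0\sim$ (subleading data of $p,q$), $R_0\gg\ep_0$: the paper also chooses $\ep_0$ ``sufficiently large'' rather than small, and this is the genuine subtlety. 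Where you diverge is part (ii): the paper redoes the computation from scratch for $S^{-(k-\nu)}$ and $T^{-(k-\nu)}$, while you conjugate by the coordinate-reversing involution $\sigma$ to turn $S^{-1}$ into a shift of type $k-\nu$ and simply re-apply part (i). This is a real shortcut — it halves the casework and makes transparent that (i) and (ii) are the same statement seen through $\sigma$ — at the modest price of having to verify that $\sigma\circ S^{-1}\circ\sigma$ is again a shift-like map (with data $a^{-1}$, $-a^{-1}p$) and that $\sigma(V_i^R(\ep))=V_{k-i+1}^R(\ep)$, both of which you do. One cosmetic point: the invariance of $V_i^R(\ep_0)$ requires the \emph{strict} inequality $|w_i|>|w_l|+\ep_0$, whereas your gap estimate is stated with $\ge\ep_0$; since your derivation actually produces $\ge R^{d_p-1}$, which strictly exceeds $\ep_0$ once $R$ is chosen large enough, this costs nothing, but the conclusion should be recorded as strict.
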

\begin{proof}
	Note that
	\[ S^\nu(z_1,\hdots,z_k)=(z_{\nu+1},\hdots,z_k,az_1+p(z_{k-\nu+1}),\hdots,az_\nu+p(z_k)),\]
	\[S^{-(k-\nu)}\cord= \big(a^{-1}\{z_{\nu+1}-p(z_1)\},\hdots, a^{-1}\{z_k-p(z_{k-\nu})\}, z_1,\hdots,z_{\nu}\big),\]
	and
	\[ T^\nu(z_1,\hdots,z_k)=(z_{\nu+1},\hdots,z_k,bz_1+q(z_{k-\nu+1}),\hdots,bz_\nu+q(z_k)),\]
	\[T^{-(k-\nu)}\cord = \big(b^{-1}\{z_{\nu+1}-q(z_1)\},\hdots, b^{-1}\{z_k-q(z_{k-\nu})\}, z_1,\hdots,z_{\nu}\big).\]
	\textit{Claim: } There exists $R_0 \gg 1$ and $\ep_0>0$ (sufficiently smaller than $R_0$) such that if $z \in D(0;R)$ and $w \in D(0;R+\ep_0)^c$ for $R \ge R_0$ then
	\[ |p(w)|-|p(z)| > 2 R+\ep_0  \text{ and } |q(w)|-|q(z)| > 2 R+\ep_0. \]
	Recall that $p(z)=c_{d_p}z^{d_p}+\cdots+c_0$ and $q(z)=c'_{d_q}z^{d_q}+\cdots+c'_0.$ For a given $\ep>0$, there exists $R(\ep) \gg 1$ such that
	$$(|c_{d_p}|-\ep)|z|^{d_p} \le |p(z)| \le (|c_{d_p}|+\ep)|z|^{d_p},$$
	and
	$$(|c'_{d_q}|-\ep)|z|^{d_q} \le |q(z)| \le (|c'_{d_q}|+\ep)|z|^{d_q},$$
	whenever $|z| \ge R(\ep).$
	Further, we modify $R_0>R(\ep)$ and choose $\ep_0$ (sufficiently large) such that if $z \in D(0;R)$ and $w \in D(0;R+\ep_0)^c$ where $R \ge R_0$, then 
	\[ |w|^{d}-|z|^{d} \ge (R+\ep_0)^d-R^d> 2RM\max\big\{(|c_{d_p}|-\ep)^{-1},(|c'_{d_q}|-\ep)^{-1}\big\}\]
	where  $d=\min(d_p,d_q)$ and $M=\max\{|a|,|b|,1\}.$ Note that 
	\[ |w|^{d_1}-|z|^{d_1} \ge  |w|^{d}-|z|^{d} \text{ and } |w|^{d_2}-|z|^{d_2} \ge  |w|^{d}-|z|^{d} .\]
	Further we modify the choice of $R_0$, such that $ 2 \ep R_0^{d}> M \ep_0.$
	Then for $z \in D(0;R) \setminus D(0;R_0)$ and $w \in D(0;R+\ep_0)^c$, where $R>R_0$, it follows that
	\begin{align*}
	|p(w)|-|p(z)|&>(|c_{d_p}|-\ep)|w|^{d_p}-(|c_{d_p}|+\ep)|z|^{d_p}\\
	&=(|c_{d_p}|-\ep)(|w|^{d_p}-|z|^{d_p})+2 \ep|z|^{d_p}\\
	&>M(2R+\ep_0) \ge 2R+\ep_0.
	\end{align*}
	and
	\begin{align*}
	|q(w)|-|q(z)|&>(|c'_{d_q}|-\ep)|w|^{d_q}-(|c'_{d_q}|+\ep)|z|^{d_q}\\
	&=(|c'_{d_q}|-\ep)(|w|^{d_q}-|z|^{d_q})+2 \ep|z|^{d_q}\\
	&>M(2R+\ep_0) \ge 2R+\ep_0
	\end{align*}
	Now by maximum modulus principle, for each $z \in D(0;R)$ and for each $w \in D(0; R+\ep_0)^c$, where $R \ge R_0$, we have
	\begin{align}\label{p}
	|p(w)|-|p(z)|>M(2R+\ep_0) \text{ and }|q(w)|-|q(z)|>M(2R+\ep_0)
	\end{align}
	and hence the claim follows.
	
	\medskip\no 
	Now let $z \in V_{i_0}^{R}(\ep_0)$ where $k-\nu+1 \le i_0 \le k$ with $R >R_0.$ Let $|z_{i_0}|=\tilde{R}+\ep_0$, then clearly $|z_i| \le \tilde{R}$ for every $1 \le i \neq i_0\le k$ where $\tilde{R}>R_0.$ Observe that 
	\[ \pi_{i} S^\nu(z)=az_{i-k-\nu}+p(z_{i}) \text{ and }\pi_{i} T^\nu(z)=bz_{i-k-\nu}+q(z_{i})\] for $k-\nu+1 \le i \le k.$
	
	\medskip\no 
	\textit{Claim: }For every $z \in V_{i_0}^{R}(\ep_0)$, $|\pi_{i_0} S^\nu(z)|>|\pi_{i} S^\nu(z)|+\ep_0$, for every $1 \le i \le k$ and $ i \neq i_0$ where $R >R_0.$
	
	\medskip\no 
	By (\ref{p}) and by definition of $M$ it follows that if $k-\nu+1 \le i \le k$ and $i \ne i_0,$ then
	\begin{align*}
	&|p(z_{i_0})|-|p(z_i)|>2|a|\tilde{R}+M\ep_0\ge |a|(|z_{i-k+\nu}|+|z_{i_0-k+\nu}|)+\ep_0\\
	&|p(z_{i_0})|-|a||z_{i_0-k+\nu}|>|p(z_i)|+|a||z_{i-k+\nu}|+\ep_0.
	\end{align*}
	But \[|az_{i_0-k+\nu}+p(z_{i_0}|\ge |p(z_{i_0})|-|a||z_{i_0-k+\nu}| \text{ and }|p(z_i)|+|a||z_{i-k+\nu}|\ge |az_{i-k+\nu}+p(z_i)|.\]
	Hence
	\[|\pi_{i_0} S^\nu(z)|>|\pi_{i} S^\nu(z)|+\ep_0\] for $k-\nu+1 \le i \le k.$ Thus the claim follows.
	
	\medskip\no Now $R_0$ is so chosen, such that $S(V_{R}^+) \subset V_{R}^+$ for every $R>R_0$, i.e., $S^\nu\big (V_{i}^R(\ep_0)\big ) \subset V_{i}^R(\ep_0)$ for $k-\nu+1 \le i \le k.$ A similar argument will work for $T$ and this completes the proof of part (i).
	
	\medskip\no 
	Now let $z \in V_{i_0}^{R}(\ep_0)$ and $1 \le j_0 \le k-\nu$ where $R>R_0.$ Let $|z_{j_0}|=\tilde{R}+\ep_0$, then clearly $|z_i| \le \tilde{R}$ for every $1 \le i \neq j_0\le k$ and $\tilde{R}>R_0.$ Observe that 
	\[ \pi_{j} S^{-(k-\nu)}(z)=a^{-1}(z_{\nu+i}+p(z_{i})) \text{ and }\pi_{j} T^{-(k-\nu)}(z)=b^{-1}(z_{\nu+i}+q(z_{i}))\] for $1 \le j \le k-\nu.$
	
	\medskip\no 
	\textit{Claim: }$|\pi_{j_0} S^{-(k-\nu)}(z)|>|\pi_{j} S^{-(k-\nu)}(z)|+\ep_0$ for every $1 \le j \le k-\nu$ and $ j \neq j_0.$
	
	\medskip\no 
	By (\ref{p}) and definition of $M$ it follows that if $k-\nu+1 \le j \le k$ and $j \ne j_0$
	\begin{align*}
	&|p(z_{j_0})|-|p(z_j)|>2\tilde{R}+M\ep_0\ge (|z_{\nu+j}|+|z_{\nu+j_0}|)+|a|\ep_0\\
	&|p(z_{j_0})|-|z_{\nu+j_0}|>|p(z_j)|+|z_{\nu+j}|+|a|\ep_0.
	\end{align*}
	But \[|z_{\nu+j_0}-p(z_{j_0})|\ge |p(z_{j_0})|-|z_{\nu+j_0}| \text{ and }|p(z_j)|+|z_{j-k+\nu}|\ge |z_{j+\nu}-p(z_j)|.\]
	Hence
	\[|\pi_{j_0} S^{-(k-\nu)}(z)|>|\pi_{j} S^{-(k-\nu)}(z)|+\ep_0\] for $k-\nu+1 \le j \le k.$ This proves the claim.
	
	\medskip\no 
	Now $R_0$ is so chosen, such that $S^{-1}(V_{R}^-) \subset V_{R}^-$ for every $R>R_0$, i.e., $S^{-(k-\nu)}\big (V_{j}^R(\ep_0)\big) \subset V_{j}^R(\ep_0)$ for $1 \le j \le k-\nu.$ A similar argument will work for $T$ and this completes the proof of part (ii).
\end{proof}
\no \textit{Step 3: } Note that Lemma \ref{Step 3} assures that appropriate modified sectors are invariant under $S^\nu$ or $S^{-(k-\nu)}$. In this step we construct B\"{o}ttcher coordinates in each of this sectors.
\begin{prop}\label{Step 4}
	Let $S$ be a polynomial shift--like map of type $\nu.$ Then there exist $R_S>0$, $\ep_S>0$ ($\ep_S \ll R_S$) and holomorphic functions $\phi_{S,i}^+$ on $V_i^{R_S}(\ep_S)$, $k-\nu+1 \le i \le k$ and $\phi_{S,j}^-$ on $V_j^{R_S}(\ep_S)$, $1 \le j \le k-\nu$ such that
	\[ \phi_{S,i}^+\big (S^\nu(z)\big )=c_{d_p} \big (\phi_{S,i}^+(z)\big )^{d_p} \text{ and } \phi_{S,j}^-\big (S^{-(k-\nu)}(z)\big )=a^{-1}c_{d_p} \big (\phi_{S,j}^-(z)\big )^{d_p}.\]
	Also $\phi_{S,i}^+(z) \sim z_i$ and $\phi_{S,j}^-(z) \sim z_j$ for $z \to \infty.$
\end{prop}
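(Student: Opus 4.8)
The plan is to construct the Böttcher coordinates sector by sector, exploiting the invariance statements from Lemma \ref{Step 3}. Fix $i$ with $k-\nu+1 \le i \le k$ and work inside $V_i^{R}(\ep_0)$, which by Lemma \ref{Step 3}(i) is invariant under $G := S^\nu$. I would first write out the $i$-th coordinate of $G$ explicitly: on $V_i^R(\ep_0)$ the dominant term of $\pi_i(G(z))$ is $c_{d_p} z_i^{d_p}$, since on this sector $|z_i|$ dominates $|z_j|$ for $j \neq i$ and $|z_i|$ is large, so $\pi_i(G(z)) = c_{d_p} z_i^{d_p}(1 + o(1))$ with the error term uniformly small and holomorphic. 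This is the standard setup for a Böttcher-type functional equation.

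Next I would define the candidate coordinate by the usual infinite product/telescoping limit: set
\[
\phi_{S,i}^+(z) = \lim_{n \to \infty} \Big( c_{d_p}^{-(1 + d_p + \cdots + d_p^{n-1})}\, \pi_i\big(G^n(z)\big) \Big)^{1/d_p^n},
\]
choosing at each stage the branch of the $d_p^n$-th root that is asymptotic to $z_i$. The invariance from Lemma \ref{Step 3}(i) guarantees $G^n(z)$ stays in $V_i^R(\ep_0)$, so the growth estimate $\pi_i(G^{n}(z)) = c_{d_p}\pi_i(G^{n-1}(z))^{d_p}(1+o(1))$ applies at every step; taking logarithms turns the product into a telescoping sum $\sum_n d_p^{-n}\log(1+o(1))$ which converges uniformly on the sector (after shrinking $R$, i.e. passing to $R_S \ge R_0$ and $\ep_S$ accordingly), giving a holomorphic limit. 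The functional equation $\phi_{S,i}^+(S^\nu(z)) = c_{d_p}(\phi_{S,i}^+(z))^{d_p}$ is then immediate from the definition by reindexing the limit, and the asymptotic $\phi_{S,i}^+(z) \sim z_i$ as $z \to \infty$ follows since the $n=0$ term is $z_i$ and all later corrections are lower order.

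For the negative coordinates, I would run the same argument with $G^{-} := S^{-(k-\nu)}$ and a sector $V_j^R(\ep_0)$, $1 \le j \le k-\nu$, which is $G^-$-invariant by Lemma \ref{Step 3}(ii). Here the explicit formula for $S^{-(k-\nu)}$ given in the proof of Lemma \ref{Step 3} shows $\pi_j(S^{-(k-\nu)}(z)) = a^{-1}\big(z_{\nu+j} - p(z_j)\big)$, whose dominant term on $V_j^R(\ep_0)$ is $-a^{-1} c_{d_p} z_j^{d_p}$; so the relevant leading constant is $a^{-1} c_{d_p}$, matching the claimed functional equation $\phi_{S,j}^-(S^{-(k-\nu)}(z)) = a^{-1}c_{d_p}(\phi_{S,j}^-(z))^{d_p}$. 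Everything else is verbatim.

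The main obstacle is the uniform control of the error term across all iterates: I need to know that once $z$ lies in a slightly shrunken sector $V_i^{R_S}(\ep_S)$, the estimate $\pi_i(G(z)) = c_{d_p} z_i^{d_p}(1 + \eta(z))$ holds with $|\eta(z)|$ small \emph{and} $G(z)$ again lies in a sector where the same bound applies with comparably small error — and that the errors decay geometrically enough (they do: the $n$-th correction is weighted by $d_p^{-n}$ and $|\pi_i(G^n(z))|$ grows doubly exponentially, so $|\eta(G^n(z))|$ is in fact $O(|z_i|^{-(d_p-1)d_p^{n-1}})$). Making the choice of $R_S$ and $\ep_S$ uniform in all this, and checking the $1/d_p^n$-th root branches are consistently chosen so the product genuinely converges to a single-valued holomorphic function rather than just locally, is the one place requiring care; the rest is the classical Böttcher construction adapted to the shift-like filtration.
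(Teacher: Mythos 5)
Your construction matches the paper's proof essentially line for line: both define $\phi_{S,i}^+$ as the telescoping limit of $\big(\pi_i\circ S^{\nu n}(z)\big)^{1/d_p^n}$ normalized by $c_{d_p}^{-1/(d_p-1)}$, establish convergence by passing to logarithms on the invariant sector supplied by Lemma~\ref{Step 3} and using the doubly-exponential growth of $|z_{i,n}|$ to make the tail of the series geometric, and then read off the functional equation by reindexing the limit. One small point you should not gloss over (the paper is equally silent here): you correctly identify the dominant term of $\pi_j\big(S^{-(k-\nu)}(z)\big)=a^{-1}\big(z_{\nu+j}-p(z_j)\big)$ as $-a^{-1}c_{d_p}z_j^{d_p}$, but then assert the relevant constant is $a^{-1}c_{d_p}$; once you fix the normalization $\phi_{S,j}^-(z)\sim z_j$, the multiplicative constant in $\phi_{S,j}^-\circ S^{-(k-\nu)}=\text{const}\cdot(\phi_{S,j}^-)^{d_p}$ is forced to equal the leading coefficient of $\pi_j\circ S^{-(k-\nu)}$, so it should be $-a^{-1}c_{d_p}$ rather than $a^{-1}c_{d_p}$, and this sign cannot be absorbed by a branch choice without breaking the asymptotic $\phi_{S,j}^-\sim z_j$.
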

\begin{proof}
	From Lemma \ref{Step 3}, there exists $R_0$ and $\ep_S$ such that for $1 \le i \le k$ and $1 \le j \le k-\nu$
	\[ S^\nu\big (V_i^{R}(\ep_S) \big) \subset V_i^{R}(\ep_S) \text{ and } S^{-(k-\nu)}\big(V_j^{R}(\ep_S)\big) \subset V_j^{R}(\ep_S)\] for every $R>R_0.$ Let $z_{i,n}=\pi_i \circ S^{\nu n}(z)$ for every $k-\nu+1 \le i \le k$ and $d=d_p$. 
	
	\medskip\no \textit{Claim: }The telescoping product 
	\[ z_{i,0}.\frac{z_{i,1}^{\frac{1}{d}}}{z_{i,0}}. \cdots . \frac{z_{i,n+1}^{\frac{1}{d^{n+1}}}}{z_{i,n}^\frac{1}{d^n}}\]
	converges in $V_i^{R}(\ep_S)$ for $R$ sufficiently large whenever $k-\nu+1 \le i \le k$.
	
	\medskip\no Note that
	\begin{align}\label{eqn1}
	\frac{z_{i,n+1}^{\frac{1}{d^{n+1}}}}{z_{i,n}^\frac{1}{d^n}}=\frac{\{az_{i-k+\nu,n}+p(z_{i,n})\}^{\frac{1}{d^{n+1}}}}{z_{i,n}^\frac{1}{d^n}}.
	\end{align} 
	Since $p(z)=c_d z^d+\hdots+c_0$, (\ref{eqn1}) can be written as
	\begin{align}\label{eqn1.1}
	\frac{z_{i,n+1}^{\frac{1}{d^{n+1}}}}{z_{i,n}^\frac{1}{d^n}}=\bigg(\frac{c_d z_{i,n}^d+\tilde{p}(z_{i-k+\nu,n},z_{i,n})}{z_{i,n}^d}\bigg)^{\frac{1}{d^{n+1}}}=c_d^{\frac{1}{d^{n+1}}}\bigg(1 +\frac{\tilde{p}(z_{i-k+\nu,n},z_{i,n})}{c_dz_{i,n}^d}\bigg)^{\frac{1}{d^{n+1}}}.
	\end{align}
	Now if $R$ is chosen sufficiently large and $z \in V_i^R(\ep_S)$
	\[ \bigg|\frac{\tilde{p}(z_{i-k+\nu,n},z_{i,n})}{c_dz_{i,n}^d}\bigg|<1, \text{ i.e., }\bigg(1+\frac{\tilde{p}(z_{i-k+\nu,n},z_{i,n})}{c_dz_{i,n}^d}\bigg) \in \mbb D(1;1) \subset \mbb C. \]
	We take an appropriate branch of logarithm such that $\text{Log} \;c_d$, $\text{Log} \;a^{-1}c_d$  and $\text{Log} \;z$ are well defined for every $z \in \mbb D(1;1)$ (the ball of radius 1 at 1). Hence, the convergence of the telescoping product 
	\[ z_{i,0}.\frac{z_{i,1}^{\frac{1}{d}}}{z_{i,0}}. \cdots . \frac{z_{i,n+1}^{\frac{1}{d^{n+1}}}}{z_{i,n}^\frac{1}{d^n}}\]
	is equivalent to convergence of the series
	\begin{align}\label{eqn2}
	\frac{1}{d}\text{Log} \bigg(\frac{z_{i,1}}{z_{i,0}^d}\bigg)+\hdots +\frac{1}{d^{n+1}}\text{Log} \bigg(\frac{z_{i,n+1}}{z_{i,n}^d}\bigg).
	\end{align} 
	Note that (\ref{eqn2}) converges for every $z \in V_i^R(\ep_s)$, i.e., the function
	\[ \phi_{S,i}^+(z)=c_d^{-\frac{1}{d-1}} \lim_{n \to \infty} z_{i,n}^{\frac{1}{d^n}}=c_d^{-\frac{1}{d-1}}\lim_{n \to \infty}\bigg(z_{i,0}.\frac{z_{i,1}^{\frac{1}{d}}}{z_{i,0}}. \cdots . \frac{z_{i,n+1}^{\frac{1}{d^{n+1}}}}{z_{i,n}^\frac{1}{d^n}}\bigg)\] is well defined in $V_i^R(\ep_S).$ Note that there exists $M>0$ such that if $R>0$ is sufficiently large
	\begin{align}\label{eqn2.1}
	\bigg|\frac{\tilde{p}(z_{i-k+\nu,n},z_{i,n})}{c_dz_{i,n}^d}\bigg| \le \frac{M}{|z_{i,n}|} \le \frac{\widetilde{M}}{|z_{i,0}|^{d^n}}.
	\end{align}
	Hence from (\ref{eqn1.1}) and (\ref{eqn2.1}) it follows that
	\[ \phi^+_{S,i}(z) \sim z_i \text{ as } z \to \infty \text { in }V_i^R(\ep_S).\] 
	Also 
	{\small \begin{align*}
	\phi_{S,i}^+\big(S^\nu(z)\big)=c_d^{-\frac{1}{d-1}} \lim_{n \to \infty} z_{i,{n+1}}^{\frac{1}{d^n}}=c_d^{-\frac{1}{d-1}} \Big(\lim_{n \to \infty} z_{i,{n+1}}^{\frac{1}{d^{n+1}}}\Big)^d=c_d^{-\frac{1}{d-1}}\Big(c_d^\frac{1}{d-1} \phi_{S,i}^+(z)\Big)^d=c_d\big(\phi_{S,i}^+(z)\big)^d.
	\end{align*}
	}
\no Thus the proof follows.
	
	\medskip\no A similar argument gives that there exist holomorphic functions $\phi_{j,S}^-$ on $V_j^R(\ep_s)$ for $1 \le j \le k-\nu$ such that 
	\[\phi_{S,j}^-\big(S^{-(k-\nu)}(z)\big)=a^{-1}c_d \big(\phi_{S,j}^-(z)\big)^d \]
	and 
	\[\phi_{S,j}^-(z) \sim z_j \text{ as } z \to \infty \text { in }V_j^R(\ep_S).\]
\end{proof}
\no Hence by Lemma \ref{Step 3} and Proposition \ref{Step 4} it is possible to choose $R_1$ sufficiently large such that the following are true:
\begin{itemize}
	\item[(i)] There exist holomorphic functions $\phi_{S,i}^+$ on $V_i^{R_1}(\ep_0)$, $k-\nu+1 \le i \le k$ and $\phi_{S,j}^-$ on $V_j^{R_1}(\ep_0)$, $1 \le j \le k-\nu$ such that
	\[ \phi_{S,i}^+\big(S^\nu(z)\big)=c_{d_p} \big(\phi_{S,i}^+(z)\big)^{d_p} \text{ and } \phi_{S,j}^-\big(S^{-(k-\nu)}(z)\big)=a^{-1}c_{d_p} \big(\phi_{S,j}^-(z)\big)^{d_p}.\]
	Also $\phi_{S,i}^+(z) \sim z_i$ and $\phi_{S,j}^-(z) \sim z_j$ for $z \to \infty.$
	\item[(ii)]There exist holomorphic functions $\phi_{T,i}^+$ on $V_i^{R_1}(\ep_1)$, $k-\nu+1 \le i \le k$ and $\phi_{T,j}^-$ on $V_j^{R_1}(\ep_0)$, $1 \le j \le k-\nu$ such that
	\[ \phi_{T,i}^+\big(T^\nu(z)\big)=c'_{d_q} \big(\phi_{T,i}^+(z)\big)^{d_q} \text{ and } \phi_{T,j}^-\big(T^{-(k-\nu)}(z)\big)=b^{-1}c'_{d_q} \big(\phi_{T,j}^-(z)\big)^{d_q}.\]
	Also $\phi_{T,i}^+(z) \sim z_i$ and $\phi_{T,j}^-(z) \sim z_j$ for $z \to \infty.$
\end{itemize}
\medskip\no \textit{Step 4: }The B\"{o}ttcher coordinates of $S^\nu$, $S^{-(k-\nu)}$ and $T^\nu$, $T^{-(k-\nu)}$ are equal in appropriate modified sectors.  
\begin{lem}
For $k-\nu+1 \le i \le k$, on $V_i^{R_1}(\ep_0)$, $\phi_{S,i}^+=\phi_{T,i}^+$ and for $1 \le j \le k-\nu$, on $V_j^{R_1}(\ep_0)$, $\phi_{S,j}^-=\phi_{T,j}^-.$
\end{lem}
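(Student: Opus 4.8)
The plan is to exploit the two facts already established: first, that $G_S^\pm = G_T^\pm$ (Step 1), and second, that on the modified sectors $V_i^{R_1}(\ep_0)$ the Böttcher-type coordinates $\phi_{S,i}^+$ and $\phi_{T,i}^+$ are each asymptotic to $z_i$ at infinity and satisfy functional equations conjugating $S^\nu$ (resp. $T^\nu$) to a one-variable power map. Since $G_S^+ = \log|z| + O(1)$ on $V_{R,S}^+$ and, more precisely, one checks that $\log|\phi_{S,i}^+(z)| = G_S^+(z)$ on $V_i^{R_1}(\ep_0)$ (this follows by passing to the limit in the telescoping product defining $\phi_{S,i}^+$ and comparing with the limit defining $G_S^+$, using that $\log^+\|S^{\nu n}(z)\| = \log|z_{i,n}| + O(1)$ on the forward-invariant sector), the same identity holds for $T$. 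Hence $|\phi_{S,i}^+| = |\phi_{T,i}^+|$ on $V_i^{R_1}(\ep_0)$.

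Next I would upgrade this equality of moduli to equality of the holomorphic functions. Since $V_i^{R_1}(\ep_0)$ is connected and $\phi_{S,i}^+, \phi_{T,i}^+$ are nowhere zero there (being $\sim z_i \to \infty$), the ratio $\psi = \phi_{S,i}^+/\phi_{T,i}^+$ is a nonvanishing holomorphic function of constant modulus $1$, hence locally constant, hence a unimodular constant; and since both functions are asymptotic to $z_i$ as $z \to \infty$, that constant is $1$. This gives $\phi_{S,i}^+ = \phi_{T,i}^+$ on $V_i^{R_1}(\ep_0)$ for each $k-\nu+1 \le i \le k$. The identical argument applied to $S^{-1}$, $T^{-1}$ and the sectors $V_j^{R_1}(\ep_0)$ for $1 \le j \le k-\nu$, using $G_S^- = G_T^-$, gives $\phi_{S,j}^- = \phi_{T,j}^-$ there.

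The step I expect to be the main obstacle is the first one: justifying carefully that $\log|\phi_{S,i}^+(z)|$ coincides with $G_S^+(z)$ on the sector, rather than merely being comparable to it. The subtlety is that $G_S^+$ is defined via $\|S^{\nu n}(z)\|$ (the full norm), whereas $\phi_{S,i}^+$ is built from the single coordinate $z_{i,n} = \pi_i \circ S^{\nu n}(z)$; one must use the sector invariance from Lemma \ref{Step 3} to know that on $V_i^{R_1}(\ep_0)$ the $i$-th coordinate dominates, so that $\log\|S^{\nu n}(z)\| = \log|z_{i,n}| + O(1)$ uniformly in $n$, and then that the error terms in the telescoping product (controlled by \eqref{eqn2.1}) contribute only an $O(1)$, which disappears after dividing by $d^n$ and taking the limit. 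An alternative, cleaner route avoiding the precise identification: on $V_i^{R_1}(\ep_0)$ both $\log|\phi_{S,i}^+|$ and $\log|\phi_{T,i}^+|$ are pluriharmonic, nonnegative, and tend to $G_S^+ = G_T^+$ in the sense that their difference with $G_S^+$ is bounded and harmonic along each complex line foliating the sector; forcing the bounded pluriharmonic difference $\log|\phi_{S,i}^+| - \log|\phi_{T,i}^+|$ to vanish then follows from its boundedness together with the asymptotics $\phi_{S,i}^+ \sim z_i \sim \phi_{T,i}^+$. Either way, once equality of moduli is in hand the holomorphic rigidity argument of the previous paragraph closes the proof.
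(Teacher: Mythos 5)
Your strategy matches the paper's --- go through the Green functions, read them off inside the sectors, then use nonvanishing plus the common asymptotic $\phi^+_{S,i}, \phi^+_{T,i} \sim z_i$ to pin the ratio at $1$. The slip is in the claimed identity $\log|\phi_{S,i}^+| = G_S^+$. On $V_i^{R_1}(\ep_0)$ what the construction actually gives is
\[
G_S^+(z) = \log\big|\phi_{S,i}^+(z)\big| + \tfrac{1}{d_p-1}\log|c_p|, \qquad G_T^+(z) = \log\big|\phi_{T,i}^+(z)\big| + \tfrac{1}{d_q-1}\log|c'_q|,
\]
because the normalizing prefactor $c_p^{-1/(d_p-1)}$ in the definition of $\phi_{S,i}^+$ --- inserted precisely so that $\phi_{S,i}^+ \sim z_i$ --- shifts the log-modulus by the constant $\tfrac{1}{d_p-1}\log|c_p|$; the raw telescoping limit $\lim_n z_{i,n}^{1/d_p^n}$, whose log-modulus really does equal $G_S^+$ on the sector, is asymptotic to $c_p^{1/(d_p-1)}z_i$, not to $z_i$. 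So $G_S^+ = G_T^+$ does not immediately give $|\phi_{S,i}^+| = |\phi_{T,i}^+|$; it gives that $\log|\phi_{S,i}^+| - \log|\phi_{T,i}^+|$ equals the constant $\tfrac{1}{d_q-1}\log|c'_q| - \tfrac{1}{d_p-1}\log|c_p|$. Fortunately that is exactly what your closing step needs: $\psi = \phi_{S,i}^+/\phi_{T,i}^+$ is a nowhere-vanishing holomorphic function of constant modulus, hence constant, and the asymptotics then force $\psi\equiv 1$ --- which, as a by-product, shows the two constants are equal (the second half of (\ref{eqn3.1})). The fix is small, but as written you assert $|\phi_{S,i}^+| = |\phi_{T,i}^+|$ before it is earned.

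Your proposed ``cleaner alternative'' is weaker and, I believe, does not close the argument: knowing only that $\log|\phi_{S,i}^+| - \log|\phi_{T,i}^+|$ is a bounded pluriharmonic function on the unbounded sector $V_i^{R_1}(\ep_0)$ that decays at infinity does not force it to vanish, since the sector has non-compact boundary and boundedness alone does not give constancy. What makes the argument work is that the difference is \emph{exactly} constant, which falls out of the telescoping limit once the normalizing prefactor is carried through. The same remarks apply verbatim to $\phi^-_{S,j}, \phi^-_{T,j}$ on $V_j^{R_1}(\ep_0)$ with $c_p, c'_q$ replaced by $a^{-1}c_p, b^{-1}c'_q$.
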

\begin{proof}
Let $R_0$ and $\ep_0$ be as chosen in Lemma \ref{Step 3}. Then for every $R>R_0$:
\begin{itemize}
	\item [(i)] For $k-\nu+1 \le i \le k$ and $z \in V_i^{R}(\ep_0)$
	$$G_S^+(z)=\lim_{n \to \infty}\frac{\log|\pi_i \circ S^{\nu n}(z)|}{d_p^n}\text{ and }G_T^+(z)=\lim_{n \to \infty}\frac{\log|\pi_i \circ T^{\nu n}(z)|}{d_q^n}.$$  
	\item [(ii)] For $1 \le j \le k-\nu$ and $z \in V_j^{R}(\ep_0)$
	$$G_S^-(z)=\lim_{n \to \infty}\frac{\log|\pi_j \circ S^{-(k-\nu) n}(z)|}{d_p^n}\text{ and }G_T^-(z)=\lim_{n \to \infty}\frac{\log|\pi_j \circ T^{-(k-\nu)n}(z)|}{d_q^n}.$$ 
\end{itemize}
Now from the proof of Proposition \ref{Step 4}, for $k-\nu+1 \le i \le k$ and $z \in V_i^{R_1}(\ep_0)$
$$G_S^+(z)={\log\Big|c_{d_p}^{\frac{1}{d_p-1}} \phi_{S,i}^+(z)\Big |}\text{ and }G_T^+(z)={\log\Big|{c'}_{d_q}^{\frac{1}{d_q-1}} \phi_{T,i}^+(z)\Big|}$$
and for $1\le j \le k-\nu$ and $z \in V_j^{R_1}(\ep_0)$
$$G_S^-(z)={\log\Big |(a^{-1}c_{d_p})^{\frac{1}{d_p-1}} \phi_{S,i}^-(z)\Big |}\text{ and }G_T^-(z)={\log\Big|(b^{-1}{c'}_{d_q})^{\frac{1}{d_q-1}} \phi_{T,i}^-(z)\Big|}.$$ Hence
$$G_S^+(z)=\log |\phi_{S,i}^+(z)|+\frac{1}{d_p-1}\log|c_p| , \; \; G_T^+(z)=\log |\phi_{T,i}^+(z)|+\frac{1}{d_q-1}\log|c_q'|$$ and
$$G_S^-(z)=\log |\phi_{S,j}^-(z)|+\frac{1}{d_p-1}\log|a^{-1}c_p| , \;\; G_T^-(z)=\log |\phi_{T,i}^-(z)|+\frac{1}{d_q-1}\log|b^{-1}c_q'|.$$
Since $G_S^+=G_T^+$ (by \textit{Step 1}), for $k-\nu+1 \le i \le k$ and for $z \in V_i^{R_1}(\ep_0)$
\begin{align}\label{eqn3}
\log |\phi_{S,i}^+(z)|+\frac{1}{d_p-1}\log|c_p|=\log |\phi_{T,i}^+(z)|+\frac{1}{d_q-1}\log|c'_q|.
\end{align} 
Now by Lemma \ref{Step 4}, both $\phi_{S,i}^+(z),\phi_{T,i}^+(z) $ asymptotic to $z_i$ as $z \to \infty$ in $V_i^{R_1}(\ep_0)$, i.e.,
\begin{align}\label{eqn3.1}
\phi_{S,i}^+=\phi_{T,i}^+ \text{ and } \frac{1}{d_p-1}\log|c_p|=\frac{1}{d_q-1}\log|c'_q|.
\end{align}
A similar argument gives that on $V_j^{R_1}(\ep_1)$, $1 \le j \le k-\nu$
\begin{align}\label{eqn3.2}
\phi_{S,j}^-=\phi_{T,j}^- \text{ and } \frac{1}{d_p-1}\log|a^{-1}c_p|=\frac{1}{d_q-1}\log|b^{-1}c'_q|.
\end{align}
\end{proof}
\no From now onwards we will use $\phi_i^+$, to denote $\phi_{S,i}^+=\phi_{T,i}^+$ on $V_i^{R_1}(\ep_0)$, for $k-\nu+1 \le i \le k$ and $\phi_j^-$, to denote $\phi_{S,j}^-=\phi_{T,j}^-$ on $V_j^{R_1}(\ep_0)$, for $1 \le j \le k-\nu.$

\medskip\no 
\textit{Step 5: } The coordinates $\mathcal{S} \circ \mathcal{T}(z)$ is related to the coordinates of $\mathcal{T} \circ \mathcal{S}(z)$ where $\cal{S}=S^m$, $\cal{T}=T^m$ and $m=lcm(\nu,k-\nu).$

 \begin{lem}\label{step 5}\leavevmode\vspace{-.1\baselineskip}
	   \begin{enumerate}
	  \item[(i)] For every $z \in \mbb C^k$ and for $k-\nu+1 \le i \le k$ there exists $\delta_+$ with $|\delta_+|=1$ such that 
\begin{align}\label{relation1}
 \pi_i \circ \cal{T} \circ \cal{S}(z)=\delta_+\big(\pi_i \circ \cal{S} \circ \cal{T}(z)\big).
 \end{align}
\item[(ii)]For every $z \in \mbb C^k$ and for $1 \le j \le k-\nu$ there exist $\delta^j_-$'s with $|\delta^j_-|=1$ such that 
\begin{align}\label{relation2}
 \pi_j \circ \cal{T} \circ \cal{S}(z)=\delta^j_-\big(\pi_j \circ \cal{S} \circ \cal{T}(z)\big).
 \end{align}
\end{enumerate}
\end{lem}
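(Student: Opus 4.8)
The plan is to exploit the fact, established in the previous steps, that on each of the invariant modified sectors the Böttcher coordinates of $S$ and $T$ agree, and that $\cal S = S^m$, $\cal T = T^m$ (with $m = \mathrm{lcm}(\nu,k-\nu)$) are \emph{regular} maps whose indeterminacy behavior is well understood. First I would fix $k-\nu+1 \le i \le k$ and work in a sector $V_i^{R_1}(\ep_0)$ where the telescoping/Böttcher function $\phi_i^+ = \phi_{S,i}^+ = \phi_{T,i}^+$ is defined and satisfies $\phi_i^+\circ S^\nu = c_{d_p}(\phi_i^+)^{d_p}$ and $\phi_i^+\circ T^\nu = c'_{d_q}(\phi_i^+)^{d_q}$. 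Since $m$ is a multiple of $\nu$, iterating these functional equations gives, on the (forward-invariant) sector, that $\phi_i^+\circ\cal S$ and $\phi_i^+\circ\cal T$ are each of the form $(\text{constant})\cdot(\phi_i^+)^{d_p^{m/\nu}}$, respectively $(\text{constant})\cdot(\phi_i^+)^{d_q^{m/\nu}}$; here one also uses Step 1, which forces $d_p = d_q =: d$, so the exponents coincide. Composing, both $\phi_i^+\circ\cal T\circ\cal S$ and $\phi_i^+\circ\cal S\circ\cal T$ equal a constant times $(\phi_i^+)^{d^{2m/\nu}}$ on the sector, and the relation $\log|c_p|/(d_p-1) = \log|c_q'|/(d_q-1)$ from \eqref{eqn3.1} shows the two constants have equal modulus. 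Since $\phi_i^+(z)\sim z_i$ as $z\to\infty$, $\phi_i^+$ is (eventually) injective on the sector, and from $\phi_i^+(\cal T\circ\cal S(z)) = \delta_+\,\phi_i^+(\cal S\circ\cal T(z))$ with $|\delta_+|=1$ — after checking that $\cal S\circ\cal T$ and $\cal T\circ\cal S$ map the sector back into a region where $\phi_i^+$ is defined — one reads off $\pi_i\circ\cal T\circ\cal S = \delta_+\,(\pi_i\circ\cal S\circ\cal T)$ on that sector. The analogous argument with $\phi_j^-$ and the backward-invariant sectors, using \eqref{eqn3.2}, gives part (ii) with $|\delta_-^j|=1$ on the corresponding sectors.

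The remaining work is to promote these identities from the sectors to all of $\mathbb C^k$. The natural device is that $\pi_i\circ\cal T\circ\cal S - \delta_+\,\pi_i\circ\cal S\circ\cal T$ (and similarly for the minus coordinates) is a polynomial in $z$, and a polynomial that vanishes on a nonempty open set vanishes identically. One subtlety: $\delta_+$ is chosen using the asymptotics in a single sector, so I would first argue that the same unimodular constant works across all sectors $V_i^{R_1}(\ep_0)$ with $k-\nu+1\le i\le k$ simultaneously — this follows because the top-degree parts of $\cal S$, $\cal T$ are governed by the same leading coefficient $c_d = c_{d_p}$ and the regularity of $S^{\nu(k-\nu)}$ (Proposition 2.4 of \cite{mypaper}) controls how the forward sectors are permuted and mapped by $\cal S$, $\cal T$. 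A clean way to make $\delta_+$ canonical is to compare the homogeneous top-degree terms of $\cal T\circ\cal S$ and $\cal S\circ\cal T$ directly: both are $(\cal T\circ\cal S)^{(h)} = (\cal S\circ\cal T)^{(h)}$ up to the scalar, and the scalar in the $i$-th coordinate for $k-\nu+1\le i\le k$ is forced to be one and the same $\delta_+$ by the single constraint $\log|c_p|/(d-1) = \log|c_q'|/(d-1)$.

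I expect the main obstacle to be exactly this globalization/consistency step: verifying that $\cal S\circ\cal T(z)$ and $\cal T\circ\cal S(z)$ actually land in the domains of definition of the Böttcher functions (i.e. in the appropriate modified sectors for $R$ large) so that the functional identity $\phi_i^+(\cal T\cal S z) = \delta_+\phi_i^+(\cal S\cal T z)$ even makes sense, and that the constant $\delta_+$ extracted from the asymptotics is genuinely independent of $i$ (and likewise $\delta_-^j$ behaves coherently across $j$). Once the identity holds on one honest open set, extending it to $\mathbb C^k$ by the identity theorem for polynomials is immediate; the care is all in the sector bookkeeping and in invoking the filtration/indeterminacy results of \cite{BP}, \cite{mypaper}, \cite{DS} to justify that the forward dynamics of $\cal S$ and $\cal T$ on $V_R^+$ match closely enough to pin down a single $\delta_+$. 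The output of this lemma — a single $\delta_+$ and constants $\delta_-^j$ — will then feed into assembling the matrix $C$ in Theorem \ref{rigidity of shifts}, where one still needs to check the $\delta_-^j$ are all equal, but that is the business of the next step, not this one.
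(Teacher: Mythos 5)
Your proposal follows the same architecture as the paper (Böttcher coordinates, functional equations, asymptotics, promotion to a polynomial identity), but there are two genuine gaps.

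First, the claim that ``Step 1 forces $d_p = d_q$'' is unfounded and, fortunately, unnecessary. Step 1 gives $G_S^\pm = G_T^\pm$, and Step 4 gives the relation $\frac{1}{d_p-1}\log|c_p| = \frac{1}{d_q-1}\log|c'_q|$; neither forces $d_p = d_q$. The exponents of $\phi_i^+$ in $\phi_i^+\circ\cal S\circ\cal T$ and $\phi_i^+\circ\cal T\circ\cal S$ agree without that assumption, since both equal $d_p^{m_1}d_q^{m_1}$ (where $m=m_1\nu$). The unimodular $\delta_+$ comes out of the relation $c_p^{d_q}c'_q = {c'_q}^{d_p}c_p\delta_1$ (forced by the equal-modulus constraint from Step 4), with $\delta_+=\delta_1^{m_1}$; in particular $\delta_+$ is defined in terms of $c_p, c'_q, d_p, d_q, m_1$ alone and is therefore automatically the same for all $i$, which dissolves the ``consistency across sectors'' worry you flagged.

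Second, and more seriously, the step where you ``read off'' the \emph{exact} identity $\pi_i\circ\cal T\circ\cal S=\delta_+\,\pi_i\circ\cal S\circ\cal T$ on the sector, and then extend it by the identity theorem because it ``vanishes on a nonempty open set,'' does not go through. The identity $\phi_i^+(\cal T\cal S\, z)=\delta_+\phi_i^+(\cal S\cal T\, z)$ is exact on the sector, but $\phi_i^+(w)$ only \emph{asymptotically} equals $w_i$; so what you get is the asymptotic relation $\pi_i\circ\cal T\circ\cal S(z)-\delta_+\,\pi_i\circ\cal S\circ\cal T(z)\to 0$ as $z\to\infty$ in $V_i^{R_1}(\ep_0)$, not vanishing on an open set. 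The key device in the paper, which your sketch is missing, is to restrict this difference (an entire polynomial in $z$) to complex lines $L^i_{z_0}=\{(\xi,z_0):\xi\in\mathbb{C}\}$: for each fixed $z_0$ the restriction is a one-variable polynomial in $\xi$ whose value tends to zero as $\xi\to\infty$, hence is identically zero, and letting $z_0$ range over $\mathbb{C}^{k-1}$ gives the identity on all of $\mathbb{C}^k$. ``Eventual injectivity of $\phi_i^+$'' plays no role here and would not suffice, since $\phi_i^+$ is scalar-valued and depends on all $k$ coordinates. For part (ii) there is also a further subtlety you gloss over: $\phi_j^-$ satisfies a functional equation with $\cal S^{-1},\cal T^{-1}$, so the relevant open set is $\cal U_j=(\cal S\cal T)^{-1}(V_j^{R_1}(\ep_0))\cap(\cal T\cal S)^{-1}(V_j^{R_1}(\ep_0))$, nonempty because both preimages are neighborhoods in $\mathbb P^k$ of the indeterminacy point $[0:\cdots:1:\cdots:0]$; and extracting $\delta_-^j$ requires choosing a root, which is why it a priori depends on $j$. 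The paper does all of this explicitly, and these line-restriction and projective-neighborhood steps are precisely what your ``sector bookkeeping'' needs to become.
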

\begin{proof} Let $m=m_1 \nu=m_2(k-\nu).$ From Lemma \ref{Step 4}, on $V_i^{R_1}(\ep_0)$, $k-\nu+1 \le i \le k$, we have  
\begin{align}\label{eqn4.1}
\phi_i^+\big(\cal{S}(z)\big)=c_p^{m_1}\big(\phi_i^+(z)\big)^{d_p m_1} \text{ and }\phi_i^+\big(\cal{T}(z)\big)={c'}_q^{m_1}\big(\phi_i^+(z)\big)^{d_q m_1}.
\end{align}
Further from (\ref{eqn3.1}) there exists $\delta_1 \in \mbb C$ such that $|\delta_1|=1$ and
\begin{align}\label{eqn4.2}
c_p^{d_q}c'_q={c'}_q^{d_p}c_p\delta_1.
\end{align}
From (\ref{eqn4.1}) thus on $V_i^{R_1}(\ep_0)$, $k-\nu+1 \le i \le k$, we have the following:
\begin{align*}
&\phi_i^+\big(\cal{S} \circ \cal{T}(z)\big)=c_p^{m_1}{c'}_q^{m_1 d_p}\big(\phi_i^+(z)\big)^{m_1^2d_pd_q} \text{ and }\\ &\phi_i^+\big(\cal{T} \circ \cal{S}(z)\big)={c'}_q^{m_1}{c}_p^{m_1 d_q}\big(\phi_i^+(z)\big)^{m_1^2d_pd_q}.
\end{align*}
Hence from (\ref{eqn4.2}) on $V_i^{R_1}(\ep_0)$, $k-\nu+1 \le i \le k$ we have
\begin{align}\label{eqn4.3}
\phi_i^+\big(\cal{T} \circ \cal{S}(z)\big)=\delta_1^{m_1}\phi_i^+\big(\cal{S} \circ \cal{T}(z)\big)=\delta_+\phi_i^+\big(\cal{S} \circ \cal{T}(z)\big)
\end{align}
where $\delta_+=\delta_1^{m_1}.$ From Lemma \ref{Step 4}, if $z \to \infty$ on $V_i^{R_1}(\ep_0)$,
\[\phi_i^+\big(\cal{T} \circ \cal{S}(z)\big) \sim \pi_i \circ \cal{T} \circ \cal{S}(z)\]
and 
\[\phi_i^+\big(\cal{S} \circ \cal{T}(z)\big) \sim \pi_i \circ \cal{S} \circ \cal{T}(z).\] So from (\ref{eqn4.3}),
\begin{align}\label{eqn4.4}
\pi_i \circ \cal{T} \circ \cal{S}(z)-\delta_+\big(\pi_i \circ \cal{S} \circ \cal{T}(z) \big)\sim 0 \text{ as }z \to \infty.
\end{align}
Fix $z_0 \in \mbb C^{k-1}$ and consider the line $L_{z_0}^i=\{(\xi,z_0) \in \mbb C^k: \pi_i(\xi,z_0)=\xi\}.$ Then $(\xi,z_0) \in V_i^{R_1}(\ep_0)$ for $|\xi|$ sufficiently large. Also $(\xi,z_0) \to \infty$ if $\xi \to \infty.$ Thus from (\ref{eqn4.4}) it follows that
\begin{align}\label{eqn4.5}
\pi_i \circ \cal{T} \circ \cal{S}(\xi,z_0)-\delta_+\big(\pi_i \circ \cal{S} \circ \cal{T}(\xi,z_0)\big) \to 0 
\end{align}
as $\xi \to \infty.$ Since (\ref{eqn4.5}) is a polynomial in $\xi$, it has to be identically the zero polynomial, i.e.,
\begin{align}\label{eqn4.6}
\pi_i \circ \cal{T} \circ \cal{S}(\xi,z_0)=\delta_+(\pi_i \circ \cal{S} \circ \cal{T}(\xi,z_0)).
\end{align}
Now (\ref{eqn4.6}) is true for any $z_0 \in \mbb C^{k-1}$, hence (\ref{relation1}) is true. This completes the proof of (i).

\medskip\no 
From Proposition \ref{Step 4}, on $V_j^{R_1}(\ep_0)$, $1 \le j \le k-\nu$ we have 
\begin{align}\label{eqn5.1}
\phi_j^-\big(\cal{S}^{-1}(z)\big)=(a^{-1}c_p)^{m_2}\big(\phi_j^-(z)\big)^{d_p m_2} \text{ and }\phi_j^-\big(\cal{T}^{-1}(z)\big)=(b^{-1}{c'}_q)^{m_2}\big(\phi_j^-(z)\big)^{d_q m_2}.
\end{align}
Therefore for $z \in \cal{S}^{-1}\big(V_j^{R_1}(\ep_0)\big)$,  $1 \le j \le k-\nu$ we have
\[\phi_j^-(z)=(a^{-1}c_p)^{m_2}\big(\phi_j^-\circ \cal{S}(z)\big)^{d_p m_2}\]
and $z \in \cal{T}^{-1}\big(V_j^{R_1}(\ep_0)\big)$,  $1 \le i \le k-\nu$
\[\phi_j^-(z)=(b^{-1}c'_q)^{m_2}\big(\phi_j^-\circ \cal{T}(z)\big)^{d_q m_2}.\]
Let $U_j^1=(\cal{S} \circ \cal{T})^{-1}\big(V_j^{R_1}(\ep_0)\big)$, and $U_j^2=(\cal{T} \circ \cal{S})^{-1}\big(V_j^{R_1}(\ep_0)\big)$,  $1 \le i \le k-\nu.$  Note that $U_j^1 \cap U_j^2 \neq \emptyset$ as both $U_j^1$ and $U_j^2$ are neighbourhood of the point $$[\underbrace{0:\cdots:0:1:0\cdots:0}_{j-\text{th position}}] \in \mbb P^k.$$
Let $\cal{U}_j=U_j^1 \cap U_j^2$ for $1 \le j \le k-\nu.$ Thus for $z \in \cal{U}_j$
\begin{align}\label{eqn5.2}
\phi_j^-(z)={(b^{-1}{c'}_q)}^\nu(a^{-1}c_p)^{d_q\nu}\big(\phi_j^- \circ \cal{S} \circ \cal{T}(z)\big)^{d_qd_p\nu^2}
\end{align} 
and
\begin{align}\label{eqn5.3}
\phi_j^-(z)={(b^{-1}{c'}_q)}^{d_p m_2}(a^{-1}c_p)^{m_2}\big(\phi_j^-\circ \cal{T} \circ \cal{S}(z)\big)^{d_qd_p m_2^2}.
\end{align}
Further from (\ref{eqn3.2}) there exists $\delta_2 \in \mbb C$ such that $|\delta_2|=1$ and
\begin{align}\label{eqn5.4}
(a^{-1}c_p)^{d_q}(b^{-1}c'_q)=(b^{-1}c'_q)^{d_p}(a^{-1}c_p)\delta_2.
\end{align}
Combining (\ref{eqn5.2})--(\ref{eqn5.4}), for $z \in \cal{U}_j$, we have that
\begin{align}
\phi_j^-\big (\cal{T} \circ \cal{S}(z))^{d_qd_p m_2^2}=\delta_2^{m_2}\big (\phi_j^-\circ \cal{S} \circ \cal{T}(z)\big)^{d_qd_p m_2^2}.
\end{align}
For every $1 \le j \le k-\nu$ we take appropriate root $\delta_2^{m_2}$, say $\delta_-^j$ (note that $|\delta_-^j|=1$) on $\cal{U}_j$ we have that 
\begin{align}\label{eqn5.5}
\phi_j^-\big (\cal{T} \circ \cal{S}(z)\big )=\delta_-^j\phi_j^-\big(\cal{S} \circ \cal{T}(z)\big).
\end{align}
Fix a $j$, $1 \le j \le k-\nu$ and $c=(c_1,\hdots,c_{k-1}) \in \mbb C^{k-1}$ . Further for $\xi \in \mbb C$, consider the points in the projective space
\[ P_\xi=[\underbrace{c_1:\cdots:c_{j-1}:\xi:c_{j}\cdots:c_{k-1}:1}_{j-\text{th position}}].\]
Let $\xi_n \in \mbb C$ such that $|\xi_n| \to \infty.$ Then 
\[P_{\xi_n}=[c_1:\cdots:c_{j-1}:\xi:c_{j}\cdots:c_{k-1}:1] \to [\underbrace{0:\cdots:0:1:0\cdots:0}_{j-\text{th position}}] \in \mbb P^k,\] i.e.,
$x_n=(c_1,\hdots,c_{j-1},\xi_n,c_{j}\hdots,c_{k-1}) \in \cal{U}_j$ for $n$ sufficiently large. Note
\[ P_1(\xi)=\pi_j \circ \cal{S} \circ \cal{T}(c_1,\hdots,c_{j-1},\xi,c_{j}\hdots,c_{k-1})\]
and
\[ P_2(\xi)=\pi_j \circ \cal{T} \circ \cal{S}(c_1,\hdots,c_{j-1},\xi,c_{j}\hdots,c_{k-1})\]
are polynomials in $\mbb C.$ Thus $P_1(\xi_n) \to \infty$, $P_2(\xi_n) \to \infty$  as $n \to \infty$, i.e., 
\[ \cal{S} \circ \cal{T}(x_n) \in V_j^{R_1}(\ep_0) \text{ and } \pi_j \circ \cal{S} \circ \cal{T}(x_n) \to \infty\]and
\[ \cal{T} \circ \cal{S}(x_n) \in V_j^{R_1}(\ep_0) \text{ and } \pi_j \circ \cal{T} \circ \cal{S}(x_n) \to \infty\]
as $n \to \infty.$ Therefore from Lemma \ref{Step 4} we have that
\[\phi_j^- \circ \cal{S} \circ \cal{T}(x_n)\sim\pi_j \circ \cal{S} \circ \cal{T}(x_n)\]
and
\[\phi_j^- \circ \cal{T} \circ \cal{S}(x_n)\sim\pi_j \circ \cal{T} \circ \cal{S}(x_n).\] Now from (\ref{eqn5.5}), it follows that
\begin{align*}
\pi_j\big(\cal{T} \circ \cal{S}(x_n)\big)-\delta_-^j\pi_j\big(\cal{S} \circ \cal{T}(x_n)\big) \to 0, \text{ i.e., } P_2(\xi_n)-\delta_-^jP_1(\xi_n) \to 0
\end{align*}
as $n \to \infty.$ Since $P_2 -\delta_-^jP_1$ is polynomial in one variable, the above is not possible unless it is the zero polynomial. Hence for every $\xi \in \mbb C$
\[\pi_j \circ \cal{T} \circ \cal{S}(c_1,\hdots,c_{j-1},\xi,c_{j}\hdots,c_{k-1})=\delta_-^j \pi_j \circ \cal{S} \circ \cal{T}(c_1,\hdots,c_{j-1},\xi,c_{j}\hdots,c_{k-1}).\]
Now the above observation is true for any $c \in \mbb C^{k-1}$, thus (\ref{relation2}) holds. This completes the proof of (ii).
\end{proof}
\no Thus there exists a linear map $ C_1(z_1,\hdots,z_k)=(\tilde{\delta_1}z_1,\hdots,\tilde{\delta_k}z_k)$ such that
\begin{align}\label{eqn6.1}
\cal{T} \circ \cal{S}(z)=C_1 \circ \cal{S} \circ \cal{T}(z)
\end{align}
where $\tilde{\delta_j}=\delta_-^j$ for $1 \le j \le k-\nu$ and $\tilde{\delta_i}=\delta_+$ for $k-\nu+1 \le i \le k.$

\medskip\no \textit{Step 6: } The coordinates of $\cal{T}^{-1} \circ \cal{S}^{-1}$ and $\cal{S}^{-1} \circ \cal{T}^{-1}$ are also related in a similar way.
\begin{lem}\label{step 6} 
There exists a linear map
\[ C_2(z_1,\hdots,z_k)=(\delta_1'z_1,\hdots,\delta_k'z_k)\]
where $\delta'_j=\delta'_-$ for $1 \le j \le k-\nu$ and $\delta'_i={\delta'_+}^i$ for $k-\nu+1 \le i \le k$ such that 
\begin{align}\label{eqn6.2}
 \cal{T}^{-1} \circ \cal{S}^{-1}(z)=C_2 \circ \cal{S}^{-1} \circ \cal{T}^{-1}(z) \text{ i.e., }
\cal{T} \circ \cal{S}(z)= \cal{S} \circ \cal{T}\circ C_2 (z).
\end{align}
\end{lem}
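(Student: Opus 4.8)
The plan is to rerun the proof of Lemma~\ref{step 5} with $S,T,\mathcal{S},\mathcal{T}$ replaced by $S^{-1},T^{-1},\mathcal{S}^{-1},\mathcal{T}^{-1}$; concretely this interchanges the roles of the positive sectors $V_i^{R_1}(\ep_0)$, $k-\nu+1\le i\le k$, and the negative sectors $V_j^{R_1}(\ep_0)$, $1\le j\le k-\nu$. No new B\"ottcher coordinates are required: one keeps $\phi_i^+$ and $\phi_j^-$, but now $\phi_j^-$ is read as the coordinate along which the \emph{forward} iterate $\mathcal{S}^{-1}=(S^{-(k-\nu)})^{m_2}$ of $S^{-1}$ (and likewise $\mathcal{T}^{-1}$) is conjugate to a monomial map, while $\phi_i^+$ now plays the auxiliary role that $\phi_j^-$ played in Lemma~\ref{step 5}(ii). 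The only inputs are Proposition~\ref{Step 4}, the relations (\ref{eqn3.1})--(\ref{eqn3.2}), the invariance of sectors from Lemma~\ref{Step 3}, and the indeterminacy-set/filtration description of $S^{-1},T^{-1}$ from \cite{mypaper} and \cite{DS}.

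For the coordinates $1\le j\le k-\nu$ I would use $\phi_j^-$. Because $\phi_j^-$ conjugates the forward maps $\mathcal{S}^{-1}$ and $\mathcal{T}^{-1}$ directly (with constants built from $a^{-1}c_p$ and $b^{-1}c_q'$, independent of $j$) and these maps preserve $V_j^{R_1}(\ep_0)$, both $\phi_j^-\big(\mathcal{T}^{-1}\circ\mathcal{S}^{-1}(z)\big)$ and $\phi_j^-\big(\mathcal{S}^{-1}\circ\mathcal{T}^{-1}(z)\big)$ equal the same power of $\phi_j^-(z)$ up to a constant. Hence their quotient is a single constant $\delta'_-$, the same for every $j$, with $|\delta'_-|=1$ thanks to (\ref{eqn3.2}). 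Letting $z\to\infty$ in the sector, using $\phi_j^-(z)\sim z_j$, and then restricting to complex lines through $\infty$ exactly as in Lemma~\ref{step 5} forces the polynomial identity $\pi_j\circ\mathcal{T}^{-1}\circ\mathcal{S}^{-1}=\delta'_-\,\pi_j\circ\mathcal{S}^{-1}\circ\mathcal{T}^{-1}$ for all $1\le j\le k-\nu$. This is the analogue of the ``clean'' case Lemma~\ref{step 5}(i).

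For the coordinates $k-\nu+1\le i\le k$ I would use $\phi_i^+$, which conjugates $\mathcal{S}$ (not $\mathcal{S}^{-1}$), so that $\phi_i^+(z)$ is a constant times a power of $\phi_i^+\big(\mathcal{S}^{-1}(z)\big)$, and similarly for $\mathcal{T}$. Iterating these on the open set $\mathcal{U}_i'=(\mathcal{S}^{-1}\circ\mathcal{T}^{-1})^{-1}\big(V_i^{R_1}(\ep_0)\big)\cap(\mathcal{T}^{-1}\circ\mathcal{S}^{-1})^{-1}\big(V_i^{R_1}(\ep_0)\big)$ — which is nonempty, being in $\mbb P^k$ a neighbourhood of the relevant coordinate point at infinity (as for the sets $\mathcal{U}_j$ of Lemma~\ref{step 5}(ii), by the indeterminacy analysis of \cite{mypaper} and \cite{DS}) — yields an identity $\big(\phi_i^+(\mathcal{T}^{-1}\circ\mathcal{S}^{-1}(z))\big)^{D}=\mu\,\big(\phi_i^+(\mathcal{S}^{-1}\circ\mathcal{T}^{-1}(z))\big)^{D}$ on $\mathcal{U}_i'$, where $D$ is the common iterate degree and $|\mu|=1$ by (\ref{eqn3.1}). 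One then picks the branch of the $D$-th root on $\mathcal{U}_i'$ — the branch, hence the unimodular constant ${\delta'_+}^i$, possibly depending on $i$, just as in Lemma~\ref{step 5}(ii) — and passes to one-variable restrictions to get $\pi_i\circ\mathcal{T}^{-1}\circ\mathcal{S}^{-1}={\delta'_+}^i\,\pi_i\circ\mathcal{S}^{-1}\circ\mathcal{T}^{-1}$.

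Collecting the $k$ coordinate identities produces the diagonal linear map $C_2=\mathrm{diag}\big(\delta'_-,\dots,\delta'_-,{\delta'_+}^{k-\nu+1},\dots,{\delta'_+}^k\big)$ with $\mathcal{T}^{-1}\circ\mathcal{S}^{-1}=C_2\circ\mathcal{S}^{-1}\circ\mathcal{T}^{-1}$; taking inverses of both sides and rearranging gives the equivalent identity $\mathcal{T}\circ\mathcal{S}=\mathcal{S}\circ\mathcal{T}\circ C_2$. The step I expect to be the main obstacle — exactly as in Lemma~\ref{step 5}(ii), and absent from (i) — is the third paragraph: one has to check that $\mathcal{U}_i'$ is nonempty and reaches $\infty$ along suitable one-dimensional slices (so that $\pi_i\circ\mathcal{T}^{-1}\circ\mathcal{S}^{-1}$ and $\pi_i\circ\mathcal{S}^{-1}\circ\mathcal{T}^{-1}$ can be compared as polynomials in one variable), and that the high-order root selected is genuinely a single-valued constant on each $\mathcal{U}_i'$. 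Both follow by transcribing the relevant portions of Lemma~\ref{step 5} with $S,T$ replaced by $S^{-1},T^{-1}$.
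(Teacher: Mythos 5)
Your proposal is correct and takes exactly the approach the paper indicates: its entire proof of Lemma~\ref{step 6} is the one-line remark that one interchanges the roles of $\mathcal{S},\mathcal{T}$ with $\mathcal{S}^{-1},\mathcal{T}^{-1}$ in Lemma~\ref{step 5}, and your write-up is a faithful unpacking of that remark. In particular you have correctly identified which side becomes the clean case (coordinates $1\le j\le k-\nu$ via $\phi_j^-$, giving a single $\delta'_-$) and which becomes the intersection-of-preimages case with its root ambiguity (coordinates $k-\nu+1\le i\le k$ via $\phi_i^+$, giving ${\delta'_+}^i$ possibly depending on $i$), which matches the stated form of $C_2$.
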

\begin{proof}
The proof is similar to Lemma \ref{step 5} if the role of $\cal{T}$ and $\cal{S}$ are interchanged with the role of $\cal{T}^{-1}$ and $\cal{S}^{-1}$ appropriately.
\end{proof}
\no\textit{Step 7: }Last step is to prove that the maps $C_1$ and $C_2$ are equal.

\medskip\no
From (\ref{eqn6.1}) and (\ref{eqn6.2}) it follows that 
\begin{align}\label{eqn6.3}
\cal{S} \circ \cal{T}= C_1^{-1} \circ \cal{S} \circ \cal{T} \circ C_2 (z).
\end{align}
Let $D_1$, $D_2$ be diagonal matrices defined as follows:
\[ D_1=\text{Diag}({\delta_-^1},\hdots, {\delta_-^{k-\nu}},\delta_+, \hdots, \delta_+) 
\text{ and }D_2=\text{Diag}({\delta'_-},\hdots, \delta'_-,{\delta'}_+^{k-\nu+1}, \hdots, {\delta'}_+^{k}).\]
Now by applying chain rule to (\ref{eqn6.3}) it follows that  
\begin{align}\label{eqn6.4}
D_1 A=AD_2
\end{align}
where $A=D(\cal{S} \circ \cal{T})(0)$. Note that $A$ is invertible and $A$, $D_1$, $D_2$ has the following forms:
{\small
	\begin{align*}
	A=\left[
	\begin{array}{c|c}
	\cal{A}_{k-\nu , k-\nu}  &\cal{B}_{k-\nu ,\nu} \\ \hline
	\cal{C}_{\nu,  k-\nu} &\cal{D}_{\nu , \nu}
	\end{array}\right], \;
	D_1=\left[
	\begin{array}{c|c}
	\cal{D}_1  &0 \\ \hline
	0 &\text{Id}_\nu\delta_+^{-1}
	\end{array}\right], \text{ and }
	D_2=\left[
	\begin{array}{c|c}
	\text{Id}_{k-\nu}\delta'_-  &0 \\ \hline
	0 &\cal{D}_2
	\end{array}\right]
	\end{align*} 
}
where $\cal{D}_1=\text{Diag}({\delta_-^1},\hdots, {\delta_-^{k-\nu}})$ and $\cal{D}_2=\text{Diag}({\delta'}_+^{k-\nu+1}, \hdots, {\delta'}_+^{k}).$ Thus (\ref{eqn6.4}) simplifies as:

\begin{align}\label{eqn6.5}
\left[
\begin{array}{c|c}
\cal{D}_1\cal{A}_{k-\nu , k-\nu}  &\cal{D}_1\cal{B}_{k-\nu ,\nu} \\ \hline
\delta_+\cal{C}_{\nu,  k-\nu} &\delta_+\cal{D}_{\nu , \nu}
\end{array}\right]=\left[
\begin{array}{c|c}
\delta'_-\cal{A}_{k-\nu , k-\nu}  &\cal{B}_{k-\nu ,\nu}\cal{D}_2 \\ \hline
\delta'_-\cal{C}_{\nu,  k-\nu} &\cal{D}_{\nu , \nu}\cal{D}_2
\end{array}\right].
\end{align}
\textit{Case 1:} Suppose $\nu \neq k-\nu$, i.e., either $\nu< k-\nu$ or $k-\nu< \nu.$

\medskip\no Assume $k-\nu<\nu.$ Since $A$ is invertible, the possibilities are
\begin{itemize}
	\item[(i)] $\cal{A}$ and $\cal{D}$ should be invertible.
	\item[(ii)] If $\cal{A}$ or $\cal{D}$ is not invertible, then $\text{Rank}(\cal{B})=k-\nu.$
\end{itemize}
\textit{Subcase (i): } If $\cal{A}$ and $\cal{D}$ are invertible, then $\cal{D}_1=\delta'_-\text{Id}$ and $\cal{D}_2=\delta_+\text{Id}$, i.e., $\delta_-^i=\delta'_-$ for $1 \le i \le k-\nu$ and ${\delta'}_+^i=\delta_+$ for $k-\nu+1 \le i \le k.$ This proves that for $|\delta_\pm|=1$,
$$C_1(z_1,z_2, \hdots,z_k)= C_2(z_1,z_2, \hdots,z_k)=(\delta_- z_1,\hdots,\delta_- z_{k-\nu},\delta_+z_{k-\nu+1}\hdots,\delta_+ z_k).$$ 
\no\textit{Subcase (ii): }Otherwise if $\text{Rank}(\cal{B})=k-\nu$, then any $(k-\nu)-$columns of $\cal{B}$ are linearly independent. From(\ref{eqn6.5}) it follows that,
{\small
	\[
	\begin{pmatrix}
	\delta_-^1 b_{11} & \delta_-^1 b_{12}  &\hdots &\delta_-^1 b_{1\nu} \\
	\delta_-^2 b_{21} & \delta_-^2 b_{22}  &\hdots &\delta_-^2 b_{2\nu} \\
	\vdots &\vdots &\hdots &\vdots\\
	\delta_-^{k-\nu} b_{(k-\nu)1} & \delta_-^{k-\nu} b_{(k-\nu)2}  &\hdots &\delta_-^{k-\nu} b_{(k-\nu)\nu} 
	\end{pmatrix}
	=\begin{pmatrix}
	{\delta'}_+^1 b_{11} & {\delta'}_+^2 b_{12}  &\hdots &{\delta'}_+^\nu b_{1\nu} \\
	{\delta'}_+^1 b_{21} & {\delta'}_+^2 b_{22}  &\hdots &{\delta'}_+^\nu b_{2\nu} \\
	\vdots &\vdots &\hdots &\vdots\\
	{\delta'}_+^1 b_{(k-\nu)1} & {\delta'}_+^2 b_{(k-\nu)2}  &\hdots &{\delta'}_+^{k-\nu} b_{(k-\nu)\nu} 
	\end{pmatrix}.
	\]
}
\no Construct $\widetilde{\cal{B}}$ by choosing any $(k-\nu)-$columns of $\cal{B}.$ Further, choose the corresponding $(k-\nu)$ eigenvalues of $\cal{D}_2$ and denote it by $\widetilde{\cal{D}_2}.$ Then
\[ \cal{D}_1 \widetilde{B}=\widetilde{B} \widetilde{\cal{D}_2}, \text{ i.e., } {\text{eigenvalues of }}\cal{D}_1={\text{ eigenvalues of }}\widetilde{\cal{D}_2}.\]
Now since any $(k-\nu)-$eigenvalues of $\cal{D}_2$ is equal to the eigenvalues of $\cal{D}_1$, we get that all the eigenvalues of $\cal{D}_1$ and $\cal{D}_2$ should be equal. This proves that $\cal{D}_1=\rho \text{Id}_{k-\nu}$ and $\cal{D}_2=\rho \text{Id}_\nu.$
Note that since $\cal{A}$ and $\cal{D}$ are not invertible, $\cal{C}$ should have non-zero elements, i.e., $\delta'_-=\delta_+=\eta.$ Further, $\cal{A}$ and $\cal{D}$ cannot be identically zero matrices, since that would mean $\text{Rank}(A)< k.$ Hence $\eta=\rho.$
This proves that for $|\eta|=1$,
$$C_1(z_1,z_2, \hdots,z_k)= C_2(z_1,z_2, \hdots,z_k)=(\eta z_1,\hdots,\eta z_k)$$  
The similar argument will work if $\nu<k-\nu$ by interchanging the role of rows and columns of $\cal{B}.$

\medskip\no \textit{Case 2: } For $\nu=k-\nu$, i.e., $\text{lcm}(\nu,k-\nu)=\nu$ and $\cal{S}=S^\nu$, $\cal{T}=T^\nu.$

\medskip\no So by chain rule $$A=DG(0)=DS^\nu(T^\nu(0))DT^\nu(0).$$ Also 
{\small
	$$T^\nu(0,\hdots,0)=(0,\hdots,0,q(0),\hdots,q(0)) \text{ and }
	S^\nu \circ T^\nu(0,\hdots,0)=\big(q(0),\hdots,q(0), p(q(0)),\hdots,p(q(0))\big).$$
}
Now 
\[DT^\nu(0)=
\left[
\begin{array}{c|c}
0  & \text{Id}_\nu \\ \hline
b\text{Id}_\nu &q'(0)\text{Id}_\nu
\end{array}\right]
\text{ and }
DS^\nu(T^\nu(0))=
\left[
\begin{array}{c|c}
0  & \text{Id}_\nu \\ \hline
a\text{Id}_\nu &p'(q(0))\text{Id}_\nu
\end{array}\right].\]
Thus 
\[ A=
\left[
\begin{array}{c|c}
b\text{Id}_\nu  & q'(0)\text{Id}_\nu \\ \hline
bp'(q(0))\text{Id}_\nu &(a+q'(0)p'(q(0)))\text{Id}_\nu
\end{array}\right]
.\]
As before from (\ref{eqn6.4}) and (\ref{eqn6.5}), by analyzing all possible cases we again have
$$C_1(z_1,z_2, \hdots,z_k)= C_2(z_1,z_2, \hdots,z_k)=(\delta_- z_1,\hdots,\delta_- z_{\nu},\delta_+z_{\nu+1}\hdots,\delta_+ z_k)$$ where $|\delta_\pm|=1.$ This completes the proof of Theorem \ref{rigidity of shifts}.

\section{Dynamics of skew products of H\'{e}non maps}
In this section, we describe dynamics of the maps of the form (\ref{Dskew-henon}). We consider the following three cases:
\begin{center}
 {\it{Case 1}}: $\lvert c \rvert>1$
\end{center}
Fix a $\delta\in \left(0,1\right)$. For $R>1$, recall that
\begin{equation*}
V_R^+=\left\{(\lambda,x,y)\in \mathbb{C}^3: \lvert y\rvert > \max \{R, \lvert x \rvert, {\lvert \lambda \rvert}^{\tilde{d}+1}\right\}.
\end{equation*}
\begin{lem}\label{est y}
	For any $\delta\in \left(0, h\right)$ where $h=\min\{\lvert c_H\rvert, 1\}$, there exists $R_0=R_0(\delta)>1$ such that
	\begin{equation*}
	H(V_R^+)\subseteq V_R^+
	\end{equation*}
	for all $R\geq R_0$. Further,
	\begin{equation}\label{estimate y}
	{\left(\lvert c_H\rvert-\delta \right)}^{\frac{d^n-1}{d-1}}{\lvert y \rvert}^{d^n}<\lvert y_n^\lambda \rvert< {\left(\lvert c_H\rvert+\delta\right)}^{\frac{d^n-1}{d-1}}{\lvert y \rvert}^{d^n}
	\end{equation}
	for all $(\lambda,x,y)\in V_R^+$.
\end{lem}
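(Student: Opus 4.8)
The plan is to first analyze a single application of $H$ and then bootstrap to iterates by induction. Write $H(\lambda,x,y)=(c\lambda,(H_\lambda)_1(x,y),(H_\lambda)_2(x,y))$. The inputs I would use are the decomposition $(H_\lambda)_2(x,y)=c_Hy^d+q_\lambda(x,y)$ with $\deg q_\lambda<d$ recorded above, the fact that $\deg(H_\lambda)_1=d/d_m<d$, and the growth bounds $|A_{jk}(\lambda)|,|B_{jk}(\lambda)|\le C(1+|\lambda|^{\tilde d-(j+k)})$ on $\mathbb{C}$; the latter follow from $\deg B_{jk}=l_{jk}$ together with $\tilde d=\max\{(j+k)+l_{jk}\}$, which gives $l_{jk}\le\tilde d-(j+k)$, and similarly for the $A_{jk}$.

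The crux is an estimate on $V_R^+$, where $|x|\le|y|$ and, crucially, $|\lambda|^{\tilde d+1}<|y|$. For a monomial of $q_\lambda$ this yields
\[
|B_{jk}(\lambda)x^jy^k|\le C|y|^{j+k}+C|\lambda|^{\tilde d-(j+k)}|y|^{j+k}\le C|y|^{d-1}+C|y|^{(j+k)+\frac{\tilde d-(j+k)}{\tilde d+1}},
\]
and since for $j+k\le d-1$ one computes $(j+k)+\frac{\tilde d-(j+k)}{\tilde d+1}=\frac{\tilde d(j+k+1)}{\tilde d+1}\le d-\frac{d}{\tilde d+1}<d$, summing the finitely many monomials gives $|q_\lambda(x,y)|\le C'|y|^{d-\eta}$ on $V_R^+$ for some $\eta>0$; the same computation with $j+k\le d/d_m$ in place of $j+k\le d-1$ (using $d_m\ge2$ and $d\ge2$) gives $|(H_\lambda)_1(x,y)|\le C'|y|^{d-\eta'}$. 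I expect this exponent bookkeeping — in particular recognizing that the power $\tilde d+1$ built into the definition of $V_R^+$ is exactly what forces every non-leading term to be negligible against $y^d$ — to be the only genuinely delicate point; everything else is routine.

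From $|q_\lambda(x,y)|\le C'|y|^{d-\eta}$ and $|y|>R$ we get $(|c_H|-C'R^{-\eta})|y|^d<|(H_\lambda)_2(x,y)|<(|c_H|+C'R^{-\eta})|y|^d$, so choosing $R_0>1$ with $C'R_0^{-\eta}<\delta$ (possible since $|c_H|-\delta>0$ by $\delta<h\le|c_H|$) and taking $R\ge R_0$ yields \eqref{estimate y} for $n=1$. Enlarging $R_0$ once more, I then verify the three defining inequalities of $V_R^+$ for the image point: $|(H_\lambda)_2(x,y)|>(|c_H|-\delta)|y|^d$ exceeds $R$ (since $(|c_H|-\delta)R^{d-1}>1$), exceeds $|(H_\lambda)_1(x,y)|\le C'|y|^{d-\eta'}$, and exceeds $|c\lambda|^{\tilde d+1}<|c|^{\tilde d+1}|y|$ (since $(|c_H|-\delta)|y|^{d-1}>|c|^{\tilde d+1}$), each holding for $|y|>R\ge R_0$ because $d-1\ge1$ lets a power of $|y|\ge R_0$ absorb the fixed constants. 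This proves $H(V_R^+)\subseteq V_R^+$.

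Finally \eqref{estimate y} for general $n$ follows by induction on $n$: since $V_R^+$ is now forward invariant, $H^n(\lambda,x,y)=(c^n\lambda,x_n^\lambda,y_n^\lambda)\in V_R^+$, so the one-step estimate applied at that point gives $(|c_H|-\delta)|y_n^\lambda|^d<|y_{n+1}^\lambda|<(|c_H|+\delta)|y_n^\lambda|^d$; substituting the inductive bounds on $|y_n^\lambda|$ and using $1+d\cdot\frac{d^n-1}{d-1}=\frac{d^{n+1}-1}{d-1}$ closes the induction.
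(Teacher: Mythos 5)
Your proof is correct and follows essentially the same route as the paper's: extract the leading term $c_H y^d$, show the remainder $q_\lambda$ is negligible against $|y|^d$ on $V_R^+$ by combining $|x|\le|y|$ with the defining constraint $|\lambda|^{\tilde d+1}<|y|$, derive the one-step estimate and the three invariance inequalities, and close by induction. The only cosmetic difference is that you use the sharper coefficient bound $|B_{jk}(\lambda)|\lesssim 1+|\lambda|^{\tilde d-(j+k)}$ where the paper uses the coarser uniform bound $|c_{jk}(\lambda)|<M|\lambda|_+^{\tilde d}$; both yield $|q_\lambda(x,y)|<\delta|y|^d$ on $V_R^+$ for $R$ large, so the arguments coincide at every essential point.
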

\begin{proof}
	We prove (\ref{estimate y}) by induction. Note that
	\begin{equation}\label{y1}
	\left\lvert y_1^\lambda - c_H y^d\right\rvert= \left\lvert q_\lambda(x,y)\right\rvert= \left\lvert \sum_{j+k=0}^{d-1} c_{jk}(\lambda)x^j y^k\right\rvert
	\end{equation}
	where $c_{jk}$'s are polynomials in $\mathbb{C}$ with degree at most $\tilde{d}$. Therefore, there exists $M>1$ such that 
	\begin{equation}\label{estimate pol}
	\left\lvert c_{jk}(\lambda)\right\rvert < M{\lvert \lambda\rvert}_+^{\tilde{d}}
	\end{equation}
	for all $j,k$'s with $0\leq (j+k) \leq (d-1)$ where ${\lvert\lambda\rvert}_+=\max\{\lvert \lambda\rvert,1\}$.  For $(\lambda,x,y)\in V_R^+$, combining (\ref{y1}) and (\ref{estimate pol}), we get  that
	\begin{eqnarray}\label{est q}
	\left\lvert q_\lambda(x,y)\right\rvert &\leq& M{\lvert \lambda\rvert}_+^{\tilde{d}}{\lvert y \rvert}^{d-1}+ M{\lvert \lambda\rvert}_+^{\tilde{d}}{\lvert y \rvert}^{d-2}+\cdots+ M{\lvert \lambda\rvert}_+^{\tilde{d}}\lvert y\rvert+ M{\lvert \lambda\rvert}_+^{\tilde{d}}\nonumber\\
	&\leq& M{\lvert y \rvert}^d \left( {\lvert y\rvert}^{-\frac{1}{\tilde{d}+1}}+ {\lvert y \rvert}^{-1}+\cdots+ {\lvert y\rvert}^{-\frac{\tilde{d}(d-1)+d}{\tilde{d}+1}} \right).
	\end{eqnarray}
	The last inequality follows since $\lvert y\rvert> {\lvert\lambda\rvert}_+^{\tilde{d}+1}$ which in turn gives  ${\lvert \lambda\rvert}_+^{\tilde{d}}< {\lvert y \rvert}^{\frac{\tilde{d}}{\tilde{d}+1}}$.
	Thus it follows from (\ref{est q}) that for any given $\delta\in \left(0, h\right)$ with $h=\min\{\lvert c_H\rvert,1\}$, one can choose $R$ sufficiently large such that 
	\begin{equation*}
	\left\lvert {(H_\lambda)}_2(x,y) - c_H y^d\right\rvert < \delta {\lvert y \rvert}^d
	\end{equation*}
	for all $(\lambda,x,y)\in V_R^+$, which in turn gives 
	\begin{equation}\label{est leq}
	\left( \lvert c_H\rvert - \delta \right) {\lvert y\rvert}^d < \left \lvert y_1^\lambda \right\rvert < \left( \lvert c_H\rvert + \delta \right) {\lvert y\rvert}^d
	\end{equation}
	in $V_R^+$.  Now (\ref{est leq}) gives that 
	\begin{equation}\label{y}
	\left\lvert y_1^\lambda \right\rvert>R 
	\end{equation}
	if $\left\lvert y \right\rvert>R$ with $R$  sufficiently large. Further, a similar calculation as in (\ref{est q}) gives that 
	\begin{equation}\label{x1}
	\left\lvert x_1^\lambda\right\rvert=\left\lvert {(H_\lambda)}_1(x,y)\right\rvert < \delta {\lvert y \rvert}^{d-1} 
	\end{equation}
	for $\lvert y \rvert$ sufficiently large. 
	From (\ref{est leq}) and (\ref{x1}), it follows that 
	\begin{equation}\label{xy}
	\lvert y_1^\lambda\rvert > \lvert x_1^\lambda\rvert
	\end{equation}
	for $(\lambda,x,y)\in V_R^+$ with $R$ sufficiently large. Further,  using (\ref{est leq}), we get that
	\begin{eqnarray*}
		\lvert y_1^\lambda\rvert &>& \left( \lvert c_H\rvert - \delta \right) {\lvert y\rvert}^d 
		> \left( \lvert c_H\rvert - \delta \right){\lvert y \rvert}^{d-1}{\lvert \lambda \rvert}^{\tilde{d}+1}
	\end{eqnarray*}
	for $(\lambda,x,y)\in V_R^+$. Thus if we take $\lvert y \rvert$ sufficiently large, then we get
	\begin{equation}\label{lambda}
	\left\lvert y_1^\lambda \right\rvert> {\left\lvert c\lambda\right\rvert}^{\tilde{d}+1}.
	\end{equation}
	Combining (\ref{y}), (\ref{xy}) and (\ref{lambda}), we get that
	\begin{equation}\label{subset}
	H(V_R^+)\subseteq V_R^+
	\end{equation}
	for $R$ sufficiently large.
	
	\medskip 
	\no 
	Let (\ref{estimate y}) holds for $1\leq n \leq k$, i.e., 
	\begin{equation}\label{induction}
	{\left(\lvert c_H\rvert-\delta\right)}^{\frac{d^n-1}{d-1}}{ \lvert y \rvert}^{d^n}<\left\lvert y_n^\lambda\right\rvert < {\left(\lvert c_H\rvert+\delta\right)}^{\frac{d^n-1}{d-1}}{ \lvert y \rvert}^{d^n}
	\end{equation}
	for  $1\leq n \leq k$.
	Using (\ref{subset}), it follows that  
	$$ 
	H^n(\lambda,x,y)=(c^n \lambda, x_n, y_n)=(c^n \lambda, {(H_\lambda^n)}_1(x,y), {(H_\lambda^n)}_2(x,y))\in V_R^+
	$$
	for $R>0$ sufficiently large and for all $n\geq 1$. Therefore, using (\ref{est leq}), we get 
	\begin{equation*}
	\left( \lvert c_H\rvert - \delta \right) {\lvert y_k^\lambda\rvert}^d < \left \lvert y_{k+1}^\lambda \right\rvert < \left( \lvert c_H\rvert + \delta \right) {\lvert y_k^\lambda\rvert}^d 
	\end{equation*}
	which, using induction hypothesis, gives 
	\begin{equation*}
	{\left(\lvert c_H\rvert-\delta\right)}^{\frac{d^{k+1}-1}{d-1}}{ \lvert y \rvert}^{d^{k+1}}<\left\lvert y_{k+1}^\lambda\right\rvert < {\left(\lvert c_H\rvert+\delta\right)}^{\frac{d^{k+1}-1}{d-1}}{ \lvert y \rvert}^{d^{k+1}}.
	\end{equation*}
	This finishes the proof.
\end{proof}

\begin{lem}\label{estK+}
	For  $(\lambda,x,y)\in K^+$, we have 
	\begin{equation*}
	\max \{x_n^\lambda, y_n^\lambda\}\leq {(LM)}^{(n+\frac{d}{d_m})}{\left[ \max \left\{R, \lvert x\rvert, {\lvert \lambda\rvert}^{\tilde{d}+1}\right\}\right]}^{{\left(1+\frac{d}{d_m}\right)}^{n}}.
	\end{equation*}
\end{lem}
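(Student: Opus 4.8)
The plan is to track the growth of the pair $(x_n^\lambda, y_n^\lambda)$ under one application of $H$, then iterate. The starting point is the structure of the skew-H\'enon map: from (\ref{ABjk}), $y_1^\lambda = {(H_\lambda)}_2(x,y)$ is a polynomial of degree $d$ in $(x,y)$ whose coefficients are polynomials in $\lambda$ of degree at most $\tilde{d}$, and $x_1^\lambda = {(H_\lambda)}_1(x,y)$ is a polynomial of degree $d/d_m$ in $(x,y)$ with coefficients again polynomial in $\lambda$ of degree at most $\tilde{d}$. So there is a constant $M>1$ (absorbing the number of monomials and the bound on the coefficient polynomials, in the spirit of (\ref{estimate pol})) such that
\begin{equation*}
\lvert y_1^\lambda\rvert \le M\, {\lvert\lambda\rvert}_+^{\tilde{d}}\, {\bigl(\max\{\lvert x\rvert,\lvert y\rvert,1\}\bigr)}^{d}
\quad\text{and}\quad
\lvert x_1^\lambda\rvert \le M\, {\lvert\lambda\rvert}_+^{\tilde{d}}\, {\bigl(\max\{\lvert x\rvert,\lvert y\rvert,1\}\bigr)}^{d/d_m}.
\end{equation*}
Since $d/d_m < d$, both are dominated by a single expression; setting $w_n = \max\{R,\lvert x_n^\lambda\rvert,\lvert y_n^\lambda\rvert, {\lvert c^n\lambda\rvert}^{\tilde d+1}\}$ and noting that the $\lambda$-coordinate only shrinks or stays comparable (one should pick $L$ to also dominate the factor coming from ${\lvert\lambda\rvert}_+^{\tilde d}$ against ${\lvert\lambda\rvert}^{\tilde d+1}$, which is where the $\max\{R,\lvert x\rvert,{\lvert\lambda\rvert}^{\tilde d+1}\}$ appears), one gets a one-step recursion of the shape $w_{n+1} \le LM\, w_n^{\,1 + d/d_m}$.

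The next step is to solve this recursion. Writing $\alpha = 1 + d/d_m$ and $c_0 = LM$, the inequality $w_{n+1}\le c_0 w_n^{\alpha}$ iterates to
\begin{equation*}
w_n \le c_0^{\,1+\alpha+\cdots+\alpha^{n-1}}\, w_0^{\,\alpha^n} \le c_0^{\,n\alpha^{n}}\, w_0^{\,\alpha^n},
\end{equation*}
and since $\alpha^n \le (1+d/d_m)^n$ while the geometric-type exponent $1+\alpha+\cdots+\alpha^{n-1}$ is at most $n\alpha^n$, one absorbs constants to land exactly at the stated bound $w_n \le (LM)^{n + d/d_m}\,w_0^{(1+d/d_m)^n}$, with $w_0 = \max\{R,\lvert x\rvert,{\lvert\lambda\rvert}^{\tilde d+1}\}$ since $(\lambda,x,y)\in K^+$ forces control in terms of the initial data only (we are not in $V_R^+$, where escape happens). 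The extra $d/d_m$ in the exponent of $LM$ is precisely the slack needed to cover the first step cleanly.

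The one genuinely delicate point — and the step I expect to be the main obstacle — is justifying that for $(\lambda,x,y) \in K_H^+$ the relevant coordinates stay bounded by a function of the \emph{initial} data, i.e. that $w_n$ is the right quantity to run the recursion on. For a point in $K_H^+$ the orbit need not be bounded (as the paper emphasizes in the discussion preceding this lemma, when $\lvert c\rvert>1$ and $\lambda\neq 0$ one has $\lvert H^n(\lambda,x,y)\rvert\to\infty$), so one cannot simply say "bounded orbit." What one uses instead is that $K_H^+ = \mathbb{C}^3\setminus U_H^+$ and $U_H^+=\bigcup_n H^{-n}(V_R^+)$, so no forward iterate of a point in $K_H^+$ ever enters $V_R^+$; hence at every step at least one of the defining inequalities of $V_R^+$ fails, which is what lets the crude polynomial bound above (rather than escape) control $w_{n+1}$ in terms of $w_n$, and inductively in terms of $w_0$. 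Once this invariance is in hand the rest is the bookkeeping of constants and exponents sketched above; I would not grind through the exact value of $M$ or the precise comparison of ${\lvert\lambda\rvert}_+^{\tilde d}$ with ${\lvert\lambda\rvert}^{\tilde d+1}$, beyond noting that choosing $R$ and $L$ large makes all such comparisons go through.
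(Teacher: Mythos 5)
Your proposal follows essentially the same route as the paper's proof: from $K_H^+ = \mathbb{C}^3 \setminus U_H^+$ one gets that no forward iterate enters $V_R^+$, hence $\lvert y_n^\lambda\rvert \le M_n^\lambda := \max\{R, \lvert x_n^\lambda\rvert, \lvert c^n\lambda\rvert^{\tilde d+1}\}$; one then bounds $\lvert x_n^\lambda\rvert$ by the degree-$d/d_m$ polynomial estimate from (\ref{ABjk}), absorbs the $\lvert\lambda\rvert_+^{\tilde d}$ coefficient into one extra power of $M_{n-1}^\lambda$, checks (for $R$ large) that this term dominates $\lvert c^n\lambda\rvert^{\tilde d+1}$, and iterates $M_n^\lambda \le LM\,(M_{n-1}^\lambda)^{1+d/d_m}$. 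One stray remark: ``the $\lambda$-coordinate only shrinks or stays comparable'' is wrong in the $\lvert c\rvert>1$ case under discussion (it grows geometrically); what saves the recursion is precisely $\lvert c^n\lambda\rvert^{\tilde d+1} \le \lvert c\rvert^{\tilde d+1} M_{n-1}^\lambda$ with the bounded factor $\lvert c\rvert^{\tilde d+1}$ absorbed into the constant, which your next clause effectively does.
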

\begin{proof}
	Let $(\lambda,x,y)\in K^+$ which implies that $H^n(\lambda,x,y)=(c^n \lambda, x_n^\lambda, y_n^\lambda)\notin V_R^+$ for all $n\geq 0$, i.e.,
	\begin{equation}\label{Mn}
	\left\lvert y_n^\lambda\right\rvert \leq M_n^\lambda =\max \left \{R, \left\lvert x_n^\lambda\right\rvert, {\left\lvert c^n\lambda\right\rvert}^{\tilde{d}+1} \right \}.
	\end{equation}
	Now
	\begin{eqnarray*}
		\left\lvert x_n^\lambda \right\rvert&=&\left\lvert {(H_{c^{n-1}\lambda})}_1(x_{n-1}^\lambda, y_{n-1}^\lambda)\right\rvert \\ 
		&=&\left\lvert \sum_{j+k=0}^{{d}/{d_m}} A_{jk}(c^{n-1}\lambda)({x_{n-1}^\lambda)} ^j {(y_{n-1}^\lambda)}^k\right\rvert \leq LM {\lvert c^{n-1} \lambda \rvert}_+^{\tilde{d}} {( M_{n-1}^\lambda)}^{{d}/{d_m}} \leq LM {(M_{n-1}^\lambda)}^{(1+\frac{d}{d_m})}
	\end{eqnarray*}
	for some $L,M>1$. The last inequality follows from (\ref{Mn}). Further, note that 
	\begin{equation*}
	{\left \lvert c^n \lambda\right \rvert}^{\tilde{d}+1}\leq {\lvert c \rvert}^{\tilde{d}+1}M_{n-1}^\lambda.
	\end{equation*}
	Also if 
	\[
	LM {(M_{n-1}^\lambda)}^{(1+\frac{d}{d_m})}< {\lvert c \rvert}^{\tilde{d}+1} M_{n-1}^\lambda,
	\]
	then it follows that 
	\[
	{(M_{n-1}^\lambda)}^{\frac{d}{d_m}} < \frac{{\lvert c \rvert}^{\tilde{d}+1}}{LM}, \text{i.e., } R^{\frac{d}{d_m}}<\frac{{\lvert c \rvert}^{\tilde{d}+1}}{LM}
	\]
	which leads to a contradiction if $R$ is sufficiently large.  Therefore
	\begin{eqnarray*}
		M_n^\lambda \leq LM {(M_{n-1}^\lambda)}^{(1+\frac{d}{d_m})}
	\end{eqnarray*}
	and by induction we get 
	\begin{equation*}
	M_n^\lambda \leq {(LM)}^{(n+\frac{d}{d_m})}{\left[ \max \left\{R, \lvert x\rvert, {\lvert \lambda\rvert}^{\tilde{d}+1}\right\}\right]}^{{\left(1+\frac{d}{d_m}\right)}^{n}}.
	\end{equation*}
\end{proof}
\begin{lem} \label{global est}
	There exists a constant $K>1$ depending on the coefficients of $H$ such that for all $n\geq 1$ and for all $(\lambda,x,y)\in \mathbb{C}^3$, we have 
	\begin{equation*}
	\max \left\{\left\lvert x_n^\lambda\right\rvert, \left\lvert y_n^\lambda\right\rvert\right \} \leq {\left(K{\lvert\lambda\rvert}_+^{\tilde{d} } {\lvert c \rvert}_+^{\tilde{d} } {\lVert (x,y)\rVert}_+\right)}^{d^n}
	\end{equation*}
	where ${\lvert x \rvert}_+ \big/ {\lVert (x,y)\rVert}_+=\max \{{\lvert x \rvert}\big/ {\lVert (x,y)\rVert},1\}$, for $x, y\in \mathbb{C}$.
\end{lem}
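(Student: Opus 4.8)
The plan is to obtain a uniform (i.e. valid on all of $\mathbb C^3$, not just on $V_R^+$) upper bound for a single step of the iteration and then iterate it. The crucial structural fact we will use is the degree bound $\deg_{(x,y)} {(H_\lambda)}_1 \le d$ and $\deg_{(x,y)} {(H_\lambda)}_2 = d$, together with the fact that each coefficient $A_{jk}(\lambda)$, $B_{jk}(\lambda)$ is a polynomial in $\lambda$ of degree at most $\tilde d$, so that $\lvert A_{jk}(\lambda)\rvert, \lvert B_{jk}(\lambda)\rvert \le c_0 {\lvert\lambda\rvert}_+^{\tilde d}$ for a constant $c_0>1$ depending only on the (finitely many) coefficients of $H$.

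The first step is the one-step estimate. Writing $(x',y') = H_\lambda(x,y)$ with $x' = \sum_{j+k\le d/d_m} A_{jk}(\lambda) x^j y^k$ and $y' = \sum_{j+k\le d} B_{jk}(\lambda) x^j y^k$, I bound each monomial by $\lvert A_{jk}(\lambda)\rvert \,{\lVert(x,y)\rVert}_+^{j+k} \le c_0 {\lvert\lambda\rvert}_+^{\tilde d} {\lVert(x,y)\rVert}_+^{d}$, and similarly for $y'$; summing the (bounded number of) terms and absorbing the count into the constant gives
\begin{equation*}
\max\{\lvert x'\rvert, \lvert y'\rvert\} \le c_1 {\lvert\lambda\rvert}_+^{\tilde d} {\lVert(x,y)\rVert}_+^{d}
\end{equation*}
for a constant $c_1>1$ depending only on the coefficients of $H$. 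Here I use ${\lVert(x,y)\rVert}_+^{j+k}\le {\lVert(x,y)\rVert}_+^{d}$, which holds because ${\lVert(x,y)\rVert}_+\ge 1$; this is exactly why the $+$-subscript normalization is built into the statement.

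The second step is the induction on $n$. Set $P_n = \max\{\lvert x_n^\lambda\rvert, \lvert y_n^\lambda\rvert\}$, and recall that the $\lambda$-coordinate after $n$ steps is $c^n\lambda$, so ${\lvert c^n\lambda\rvert}_+ \le {\lvert c\rvert}_+^{n}{\lvert\lambda\rvert}_+ \le {\lvert c\rvert}_+^{n}{\lvert\lambda\rvert}_+$. Applying the one-step estimate with base point $(c^{n-1}\lambda, x_{n-1}^\lambda, y_{n-1}^\lambda)$ gives $P_n \le c_1 {\lvert c^{n-1}\lambda\rvert}_+^{\tilde d} (P_{n-1})_+^{d} \le c_1 {\lvert c\rvert}_+^{n\tilde d}{\lvert\lambda\rvert}_+^{\tilde d}(P_{n-1})_+^{d}$. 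Iterating, the exponent of ${\lVert(x,y)\rVert}_+$ multiplies by $d$ each time, giving $d^n$, while the accumulated constant and $\lambda$- and $c$-factors form a geometric-type product whose total exponent is $\sum_{i=0}^{n-1} d^i = \frac{d^n-1}{d-1} \le d^n$; hence each of $c_1$, ${\lvert\lambda\rvert}_+^{\tilde d}$, ${\lvert c\rvert}_+^{\tilde d}$ gets raised to a power at most $d^n$ (for the $c$-factor one also absorbs the extra linear-in-$n$ growth $c_1^{?}$ of $n\tilde d$ against $d^n$, possible after enlarging the base constant, since $n \le d^n$). Collecting everything into a single constant $K>1$ yields
\begin{equation*}
\max\{\lvert x_n^\lambda\rvert, \lvert y_n^\lambda\rvert\} \le \left(K\,{\lvert\lambda\rvert}_+^{\tilde d}\,{\lvert c\rvert}_+^{\tilde d}\,{\lVert(x,y)\rVert}_+\right)^{d^n},
\end{equation*}
as claimed.

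The only mildly delicate point — and the one I would be most careful about — is bookkeeping the exponents so that everything really fits under a single $d^n$-th power: the naive iteration produces a factor like $c_1^{1+d+\cdots+d^{n-1}}{\lvert c\rvert}_+^{\tilde d(1 + 2d + 3d^2 + \cdots)}$, and one must check that the polynomial-in-$n$ weights ($n$, $2$, $3$, \dots) are dominated by the geometric growth $d^i$ so that, after replacing $c_1$ and ${\lvert c\rvert}_+$ by a larger constant $K$, all these sums are $\le d^n$. This is routine but is the step where an off-by-one in the exponent would show up, so I would write out the recursion $P_n \le c_1{\lvert c\rvert}_+^{n\tilde d}{\lvert\lambda\rvert}_+^{\tilde d}(P_{n-1})_+^d$ explicitly, unfold it, and bound the resulting finite product term by term before collapsing it into $K$. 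Everything else is a direct consequence of the degree bounds on ${(H_\lambda)}_1$, ${(H_\lambda)}_2$ and the polynomial dependence of the coefficients on $\lambda$ already recorded in the setup.
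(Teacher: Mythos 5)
Your proposal is correct and follows essentially the same route as the paper's proof: a one-step bound of the form $\max\{\lvert x_1^\lambda\rvert,\lvert y_1^\lambda\rvert\}\le K\lvert\lambda\rvert_+^{\tilde d}\lVert(x,y)\rVert_+^d$ obtained from the degree bounds on $(H_\lambda)_1,(H_\lambda)_2$ and the polynomial $\lambda$-dependence of the coefficients, followed by iteration and a careful accounting of the $\lvert c\rvert_+$-exponent, which picks up an $n$-dependent weight because the fiber parameter is $c^{k-1}\lambda$ at step $k$. The "mildly delicate point" you flag is exactly the step the paper handles by bounding the weighted geometric sum $\sum_{j=1}^{n-1}(n-j)d^{j-1}\le d^n$, so your instinct to unfold the recursion and bound that sum term by term is precisely what is needed.
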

\begin{proof}
	Using (\ref{ABjk}), it follows that 
	\[
	\max \left\{\left\lvert x_1^\lambda\right\rvert,\left\lvert y_1^\lambda\right\rvert\right \} \leq K {\lvert \lambda \rvert}_+^{\tilde{d}} {\lVert (x,y)\rVert}_+^{d}
	\]
	for some $K>1$. Further, inductively we get that 
	\[
	\max \left\{\left\lvert x_n^\lambda\right\rvert, \left\lvert y_n^\lambda\right\rvert\right \} \leq 
	K^{(d^n-1)} {\lvert \lambda \rvert}_+^{\tilde{d}(d^n-1)} {\left({\lvert c \rvert}_+^{\tilde{d}}\right)}^{A_n}{\lVert (x,y)\rVert}_+^{d^n}
	\]
	where 
	$$
	A_n=\sum_{j=1}^{n-1} d^{j-1} \leq d^{n-2}\sum_{j=1}^{n-1} \frac{n-j}{d^{n-j-1}} \leq d^{n-2} \sum_{j=1}^\infty \frac{j}{d^{j-1}} \leq d^n.
	$$
	This finishes the proof.
\end{proof}
\begin{thm}\label{Green_Skew}
	The sequence of functions $G_{n,H}^\pm$ converges to $G_H^\pm$ uniformly on compact sets in $\mathbb{C}^3$. The function $G_H^\pm$ is plurisubharmonic in $\mathbb{C}^3$ and pluriharmonic in $\mathbb{C}^3\setminus K_H^\pm$ satisfying 
	\begin{equation*}
	G_H^\pm \circ H^{\pm 1}=d G_H^\pm
	\end{equation*}
	in $\mathbb{C}^3$. Further, $G_H^\pm>0$ in $U_H^\pm$ and it vanishes precisely in $K_H^\pm$.
\end{thm}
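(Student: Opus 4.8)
The plan is to build $G_H^\pm$ by the standard telescoping--product/Green--function argument, localised by the filtration $V_R^\pm$ of Section~3. By symmetry it suffices to construct $G_H^+$ (the claims for $G_H^-$ follow by running everything with $H^{-1}$, $V_R^-$, and the backward analogues of the lemmas above), and I will write out the case $\lvert c\rvert>1$; the cases $\lvert c\rvert<1$ and $\lvert c\rvert=1$ are the same after interchanging $V_R^+$ and $V_R^-$. Two observations are used throughout: each $G_{n,H}^+$ is plurisubharmonic and continuous, being the plurisubharmonic function $\log^+\lVert\cdot\rVert$ on $\mathbb{C}^3$ composed with the holomorphic map $H^n$ and divided by a positive constant; and, since $\deg H^n=\tilde d\,d^{n-1}$, one has the functorial identity $G_{n,H}^+\circ H=d\,G_{n+1,H}^+$ on $\mathbb{C}^3$.

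First I would establish convergence on $V_R^+$. Fix $R$ as in Lemma~\ref{est y}, so that $H(V_R^+)\subseteq V_R^+$ and (\ref{estimate y}) holds; on $V_R^+$ the dominant coordinate is $y_n^\lambda$, so $\lVert H^n\rVert=\lvert y_n^\lambda\rvert$ and $G_{n,H}^+=\tfrac{d}{\tilde d}\,d^{-n}\log\lvert y_n^\lambda\rvert$. Applying the one-step inequality (\ref{est leq}) at $H^n(\lambda,x,y)\in V_R^+$ gives $\bigl\lvert d^{-(n+1)}\log\lvert y_{n+1}^\lambda\rvert-d^{-n}\log\lvert y_n^\lambda\rvert\bigr\rvert\le C\,d^{-(n+1)}$, so the sequence converges uniformly on $V_R^+$; passing to the limit in (\ref{estimate y}) identifies the limit as $G_H^+=\tfrac{d}{\tilde d}\bigl(\log\lvert y\rvert+\tfrac1{d-1}\log\lvert c_H\rvert\bigr)$, which for $R$ large is pluriharmonic and strictly positive on $V_R^+$. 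To pass to $U_H^+$, note that if $H^N(z)\in V_R^+$ then $G_{n,H}^+(z)=d^{-N}G_{n-N,H}^+(H^N(z))$ for $n>N$; hence $G_{n,H}^+$ converges, uniformly on compacts, to $d^{-N}G_H^+\circ H^N$ on $H^{-N}(V_R^+)$, and the functorial identity makes these limits agree on overlaps. This defines $G_H^+$ on $U_H^+=\mathbb{C}^3\setminus K_H^+$; it is pluriharmonic there (a positive multiple of a pluriharmonic function pulled back by the biholomorphism $H^N$) and strictly positive.

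Next, global convergence. By Lemma~\ref{global est}, $0\le G_{n,H}^+\le\tfrac{d}{\tilde d}\log^+\bigl(K{\lvert\lambda\rvert}_+^{\tilde d}{\lvert c\rvert}_+^{\tilde d}{\lVert(x,y)\rVert}_+\bigr)$, so $\{G_{n,H}^+\}$ is locally uniformly bounded. For $z\in K_H^+$ the whole forward orbit stays out of $V_R^+$, so by Lemma~\ref{estK+} (and, when $m=1$, by the sharper singly-exponential bound obtained by iterating $\lvert y_n^\lambda\rvert\le\max\{R,\lvert y_{n-1}^\lambda\rvert,\lvert c^n\lambda\rvert^{\tilde d+1}\}$) one has $\log^+\lVert H^n(z)\rVert=o(d^{\,n-1})$ locally uniformly on $K_H^+$; here one uses that $1+d/d_m<d$ whenever $m\ge2$, so even the doubly-exponential bound of Lemma~\ref{estK+} is negligible against the normalisation $\tilde d\,d^{n-1}$. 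Thus $G_{n,H}^+\to0$ uniformly on compact subsets of $K_H^+$. To get locally uniform convergence on all of $\mathbb{C}^3$ one patches: on the points whose orbit enters $V_R^+$ within a bounded number of steps the previous paragraph applies, and on the complement one combines the filtration bound of Lemma~\ref{estK+} along the orbit segment spent outside $V_R^+$ with the clean $V_R^+$ estimate afterwards. The resulting limit $G_H^+$ is plurisubharmonic and continuous as a locally uniform limit of such functions, pluriharmonic off $K_H^+$, strictly positive on $U_H^+$, identically zero on $K_H^+$, and satisfies $G_H^+\circ H=d\,G_H^+$ on letting $n\to\infty$ in the functorial identity; the statements for $G_H^-$ follow by symmetry.

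The \emph{main obstacle} is the patching in the last paragraph, i.e.\ getting uniform convergence across the Julia set $J_H^+=\partial K_H^+$, where the time to enter $V_R^+$ is unbounded so the second paragraph's argument does not apply directly: one must control $G_{n,H}^+$ on the (possibly long) orbit segment spent outside $V_R^+$ using Lemma~\ref{estK+} and only then switch to the $V_R^+$ estimate, and making the two match up uniformly is exactly where the degree inequality $1+d/d_m<d$ — and, in the remaining case $m=1$, the singly-exponential growth of the orbit on $K_H^+$ — is needed. This is the point at which the argument departs from the genuine-H\'enon case, where $K_H^\pm$ consists of bounded orbits and no such estimate is required.
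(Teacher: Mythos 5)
Your construction of $G_H^+$ on $V_R^+$ and its extension to $U_H^+$ by the functorial identity matches the paper, and your use of Lemma~\ref{estK+} on $K_H^+$ is the paper's first step. Where you genuinely diverge is in how plurisubharmonicity and locally uniform convergence are obtained. You propose to prove uniform convergence directly by patching the estimates across $J_H^+$ and then deduce plurisubharmonicity as a locally uniform limit of plurisubharmonic functions. The paper instead works with the upper semicontinuous regularization $G_{H*}^+$: since $G_H^+$ is pluriharmonic on $U_H^+\cup\mathrm{int}\,K_H^+$, the regularization agrees with $G_H^+$ there; the functorial identity $G_{H*}^+=d^{-n}G_{H*}^+\circ H^n$ together with the polynomial upper bound (\ref{est Gn}) and the filtration estimate forces $G_{H*}^+=0$ on $K_H^+$, so $G_{H*}^+=G_H^+$ is plurisubharmonic; and then Hartogs' lemma gives locally uniform convergence. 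The two routes rest on the same underlying filtration estimate; yours is more elementary and self-contained, while the paper's avoids the explicit patching at the cost of the regularization machinery.

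Your patching argument, though you only sketch it, does close. The key point worth spelling out is that for $n\ge N(z)$ (the first entry time into $V_R^+$) one has $G_{n,H}^+(z)-G_H^+(z)=d^{-N(z)}\bigl(G_{n-N(z),H}^+(H^{N(z)}(z))-G_H^+(H^{N(z)}(z))\bigr)$, and the tail bound on $V_R^+$ gives $\lvert G_{k,H}^+-G_H^+\rvert\le C/d^{\,k}$ there, so the $d^{-N(z)}$ factor exactly cancels and yields a rate $C/d^{\,n}$ independent of $N(z)$; and for $n<N(z)$ both $G_{n,H}^+(z)$ and $G_H^+(z)$ are controlled by the filtration estimate applied along the orbit segment outside $V_R^+$. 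This should be stated, since it is the only place the argument is delicate and you yourself flag it as the main obstacle.

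Your observation that the doubly-exponential bound of Lemma~\ref{estK+} is insufficient when $m=1$ and $d=2$ is a correct and important one: there $1+d/d_m=d$, so $(1+d/d_m)^n/d^n$ does not tend to zero, and one really must use that for $m=1$ the first fibre coordinate is $x_n^\lambda=y_{n-1}^\lambda$, forcing $M_n^\lambda\le\lvert c\rvert^{\tilde d+1}M_{n-1}^\lambda$ and hence $\log M_n^\lambda=O(n)$. The paper does not address this explicitly; indeed, at the crucial point it invokes Lemma~\ref{global est} (whose bound of order $d^n$ in the exponent gives nothing after dividing by $d^n$), which must be read as Lemma~\ref{estK+}, and even then the $m=1$, $d=2$ case requires your sharper estimate. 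So your treatment of this corner case is more careful than the text. One small point to flag in your own account: you refer to Lemma~\ref{estK+} as giving $\log\lVert H^n\rVert=O\bigl((1+d/d_m)^n\bigr)$; the prefactor $(LM)^{n+d/d_m}$ in the lemma's statement should actually be $(LM)^{((1+d/d_m)^n-1)d_m/d}$ by the geometric-series iteration, but this does not affect the conclusion since it is still dominated by the $(1+d/d_m)^n$ term and $d^n$.
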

\begin{proof}
	It follows from Lemma \ref{estK+} that $G_{n,H}^+ \rightarrow 0$ as $n\rightarrow \infty$ in $K_H^+$. Further, it follows from Lemma \ref{est y} that 
	\[
	G_{n,H}^+(\lambda,x,y)=\frac{1}{\tilde{d} d^{n-1}} \log^+ \lVert y_n^\lambda\rVert
	\]
	in $U_H^+$. Note that (\ref{Green def}) gives 
	\[
	\left\lvert G_{{(n+1)},H}^+(\lambda,x,y)-G_{n,H}^+(\lambda,x,y)\right\rvert\leq \frac{1}{\tilde{d}d^n}\log^+ \frac{\left\lvert y_{n+1}^\lambda\right\rvert}{{\left\lvert y_{n}^\lambda\right\rvert}^d}\leq \frac{C}{\tilde{d}d^n} 
	\]
	for some $C>1$, which in turn gives that $G_{n,H}^+$ converges uniformly to the function $G_H^+$ in  $U_H^+$. Clearly, $G_H^+$ is pluriharmonic in $U_H^+$ and it follows from Lemma \ref{est y} that $G_H^+>0$ in $U_H^+$.  Note that $U_H^+$ is completely invariant under $H$ which  shows that the set  $K_H^+$ is also completely invariant under $H$. It follows from (\ref{Green def}) that 
	\begin{equation}\label{Green funct}
	G_H^+\circ H=d G_H^+
	\end{equation}
	in $\mathbb{C}^3$. Further, it follows from Lemma \ref{est y} that
	\begin{equation} \label{est Gn}
	G_{n,H}^+ (\lambda,x,y)< M+ d\log^+ \lvert \lambda\rvert + \log^+ \lVert (x,y)\rVert
	\end{equation}
	in $\mathbb{C}^3$ for each $n\geq 1$ where $M$ is some constant depending on $H$.  Now let $G_{H*}^+$ is the upper semicontinuous regularization of $G_H^+$. So, $G_{H*}^+$ is plurisubharmonic in $\mathbb{C}^3$. Further, since $G_H^+$ is pluriharmonic in $U_H^+ \cup {\rm{int}}(K_H^+)$, it follows that $G_{H*}^+=G_H^+$ in $U_H^+ \cup {\rm{int}}(K_H^+)$. In addition, $G_{H*}^+$ enjoys the same functorial property as $G_H^+$ indicated in (\ref{Green funct}), i.e., 
	\begin{equation} \label{Green *funct}
	G_{H*}^+\circ H=d G_{H*}^+
	\end{equation}
	in $\mathbb{C}^3$ and using (\ref{est Gn}), we get
	\[
	G_{H*}^+\circ H^n(\lambda,x,y)= G_{H*}^+(c^n \lambda, x_n^\lambda, y_n^\lambda)\leq 
	M+ d\log^+ \lvert c^n \lambda\rvert + \log^+ \lVert (x_n^\lambda,y_n^\lambda)\rVert
	\]
	for $(\lambda,x,y)\in K_H^+$.
	Now combining Lemma \ref{global est} and (\ref{Green *funct}), we get that
	\[
	\frac{1}{d^n}{G_{H*}^+ \circ H^n(\lambda,x,y)}\rightarrow 0
	\]
	as $n\rightarrow \infty$ for all $(\lambda,x,y)\in K_H^+$. Therefore, $G_{H*}^+(\lambda,x,y)=0$ for $(\lambda,x,y)\in K_H^+$. Thus $G_{H*}^+=G_H^+$ in $\mathbb{C}^3$ which proves that $G_H^+$ is  plurisubharmonic in $\mathbb{C}^3$. Now using Hartog's Lemma we conclude that $G_{n,H}^+$ converges uniformly to $G_H^+$ on compact sets in $\mathbb{C}^3$. Running the similar set of arguments, we can prove the analogous result for $G_H^-$.
\end{proof}
\no 
Recall that
$$
V_R^{-}=\{(\lambda,x,y)\in \mathbb{C}^3: \lvert x\rvert > \max \{R, \lvert y \rvert\}, \lvert \lambda \rvert <1\}.
$$
Running the similar set of arguments as in Lemma \ref{est y}, we get the following.
\begin{lem}\label{est x}
	For any $\delta\in \left(0, h\right)$ where $h=\min\{\lvert c_H'\rvert, 1\}$, there exists $R_0=R_0(\delta)>1$ such that
	\begin{equation*}
	H^{-1}(V_R^-)\subseteq V_R^-
	\end{equation*}
	for all $R\geq R_0$. Further,
	\begin{equation}\label{estimate x}
	{\left(\lvert c_H'\rvert-\delta \right)}^{\frac{d^n-1}{d-1}}{\lvert x \rvert}^{d^n}<\lvert \tilde{x}_n^\lambda \rvert< {\left(\lvert c_H'\rvert+\delta\right)}^{\frac{d^n-1}{d-1}}{\lvert x \rvert}^{d^n}
	\end{equation}
	for all $(\lambda,x,y)\in V_R^-$.
\end{lem}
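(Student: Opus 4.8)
The plan is to prove Lemma~\ref{est x} as the backward counterpart of Lemma~\ref{est y}, using $H^{-1}(\lambda,x,y)=(c^{-1}\lambda,H_{c^{-1}\lambda}^{-1}(x,y))$ together with the normal form ${(H_\lambda^{-1})}_1(x,y)=c_H' x^d+q_\lambda'(x,y)$ with $\deg q_\lambda'<d$. The one structural simplification relative to the forward case is that here $\lvert c\rvert>1$, so backward iteration \emph{contracts} the parameter: if $\lvert\lambda\rvert<1$ then $\lvert c^{-n}\lambda\rvert<1$ for all $n\geq 0$. Hence every backward iterate has parameter in the unit disc, and since $\lambda\mapsto A_{jk}'(\lambda)$ and $\lambda\mapsto B_{jk}'(\lambda)$ are continuous on $\mathbb{C}$, all of them are bounded by a single constant $M>1$ over $\{\lvert\lambda\rvert<1\}$. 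This is why $V_R^-$ only needs the condition $\lvert\lambda\rvert<1$ and why none of the $\lvert\lambda\rvert_+^{\tilde d}$ bookkeeping of Lemma~\ref{est y} is required.

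First I would prove the invariance $H^{-1}(V_R^-)\subseteq V_R^-$ for $R$ large. Fix $(\lambda,x,y)\in V_R^-$, so $\lvert x\rvert>\max\{R,\lvert y\rvert\}$ and $\lvert\lambda\rvert<1$. Every monomial $x^j y^k$ appearing in $q_\lambda'$ has $j+k\leq d-1$, hence $\lvert x^j y^k\rvert\leq\lvert x\rvert^{d-1}$, so $\lvert q_\lambda'(x,y)\rvert\leq C\lvert x\rvert^{d-1}$ with $C$ depending only on $M$ and $d$; taking $R>C/\delta$ gives $\lvert q_\lambda'(x,y)\rvert<\delta\lvert x\rvert^d$, so
\[
(\lvert c_H'\rvert-\delta)\lvert x\rvert^d<\lvert\tilde{x}_1^\lambda\rvert<(\lvert c_H'\rvert+\delta)\lvert x\rvert^d.
\]
Now verify the three defining conditions of $V_R^-$ for $H^{-1}(\lambda,x,y)=(c^{-1}\lambda,\tilde{x}_1^\lambda,\tilde{y}_1^\lambda)$: $\lvert c^{-1}\lambda\rvert<1$ is immediate; since $0<\delta<\lvert c_H'\rvert$ and $\lvert x\rvert>R$, the lower bound gives $\lvert\tilde{x}_1^\lambda\rvert>(\lvert c_H'\rvert-\delta)R^d>R$ for $R$ large; and since ${(H_\lambda^{-1})}_2$ has degree $d/d_1\leq d/2$ in $(x,y)$, we get $\lvert\tilde{y}_1^\lambda\rvert\leq C'\lvert x\rvert^{d/d_1}$, which is dominated by $(\lvert c_H'\rvert-\delta)\lvert x\rvert^d$ once $R$ is large, hence $\lvert\tilde{x}_1^\lambda\rvert>\lvert\tilde{y}_1^\lambda\rvert$. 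Thus $H^{-1}(V_R^-)\subseteq V_R^-$.

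Next I would prove (\ref{estimate x}) by induction on $n$, following the proof of Lemma~\ref{est y} verbatim. The case $n=1$ is the displayed inequality above. For the step, invariance yields $H^{-n}(\lambda,x,y)=(c^{-n}\lambda,\tilde{x}_n^\lambda,\tilde{y}_n^\lambda)\in V_R^-$ for all $n\geq 0$; applying the $n=1$ estimate with $(\lambda,x,y)$ replaced by $H^{-n}(\lambda,x,y)$ (legitimate since $\lvert c^{-n}\lambda\rvert<1$) gives
\[
(\lvert c_H'\rvert-\delta)\lvert\tilde{x}_n^\lambda\rvert^d<\lvert\tilde{x}_{n+1}^\lambda\rvert<(\lvert c_H'\rvert+\delta)\lvert\tilde{x}_n^\lambda\rvert^d,
\]
and substituting the induction hypothesis together with $1+d\,\tfrac{d^n-1}{d-1}=\tfrac{d^{n+1}-1}{d-1}$ closes the induction. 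There is no real obstacle here; the argument is routine once Lemma~\ref{est y} is available. The only points needing attention are: collecting the finitely many ``$R$ large enough'' requirements into a single $R_0=R_0(\delta)$; invoking continuity of the coefficient functions to get a bound uniform over $\{\lvert\lambda\rvert<1\}$ (this is precisely where the hypothesis $\lvert c\rvert>1$, which forces the parameters of backward iterates into the unit disc, makes the backward case cleaner than the forward one); and recording that $d_1\geq 2$, so that $d/d_1<d$ and the $\tilde{y}_1^\lambda$ term is indeed negligible against $\lvert x\rvert^d$.
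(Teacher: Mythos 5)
Your proof is correct and matches the approach the paper intends: the paper does not write out a separate proof for Lemma~\ref{est x}, but merely remarks that it follows ``running the similar set of arguments as in Lemma~\ref{est y},'' and your argument is precisely that adaptation. Your observation that $\lvert c\rvert>1$ forces all backward parameters into $\{\lvert\lambda\rvert<1\}$, so that the $\lvert\lambda\rvert_+^{\tilde d}$ bookkeeping of the forward case is unnecessary, is a clean and accurate account of why the backward case is structurally simpler.
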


\begin{lem} \label{estK-}
For each $(\lambda,x,y)\in K_H^-$, $G_{n,H}^-(\lambda,x,y)\rightarrow 0$ as $n\rightarrow \infty$.
\end{lem}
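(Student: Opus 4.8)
The plan is to transcribe the argument of Lemma~\ref{estK+} for the inverse map $H^{-1}$, and then conclude exactly as $G_{n,H}^+\to 0$ on $K_H^+$ was deduced in Theorem~\ref{Green_Skew}. Fix $(\lambda,x,y)\in K_H^-$ and write $H^{-n}(\lambda,x,y)=(c^{-n}\lambda,\tilde x_n^\lambda,\tilde y_n^\lambda)$; by definition of $K_H^-$ we have $H^{-n}(\lambda,x,y)\notin V_R^-$ for all $n\ge 0$. Since $\lvert c^{-n}\lambda\rvert\le \lvert c^{-1}\rvert_+^{\,n}\lvert\lambda\rvert_+$, the term $\frac{1}{\tilde d\,d^{n-1}}\log^+\lvert c^{-n}\lambda\rvert$ tends to $0$, so it suffices to bound the growth of $\tilde M_n^\lambda:=\max\{R,\lvert\tilde y_n^\lambda\rvert,\lvert c^{-n}\lambda\rvert^{\tilde d+1}\}$ and to check $\frac{1}{\tilde d\,d^{n-1}}\log^+\tilde M_n^\lambda\to 0$.

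The filtration condition does the work. If $\lvert c\rvert\le 1$ then $V_R^-=\{\lvert x\rvert>\max\{R,\lvert y\rvert,\lvert\lambda\rvert^{\tilde d+1}\}\}$, so $H^{-n}(\lambda,x,y)\notin V_R^-$ yields $\lvert\tilde x_n^\lambda\rvert\le \tilde M_n^\lambda$ for every $n\ge 0$ (in particular $\lvert x\rvert\le \tilde M_0^\lambda=\max\{R,\lvert y\rvert,\lvert\lambda\rvert^{\tilde d+1}\}$). If $\lvert c\rvert>1$ then $V_R^-=\{\lvert x\rvert>\max\{R,\lvert y\rvert\},\ \lvert\lambda\rvert<1\}$, and since $\lvert c^{-n}\lambda\rvert$ decreases to $0$ there is $n_0$ with $\lvert c^{-n}\lambda\rvert<1$ for all $n\ge n_0$; for such $n$, $H^{-n}(\lambda,x,y)\notin V_R^-$ again gives $\lvert\tilde x_n^\lambda\rvert\le\max\{R,\lvert\tilde y_n^\lambda\rvert\}\le\tilde M_n^\lambda$, and the finitely many iterates $n<n_0$ only affect the constant below. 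Now, by (\ref{abjk}), $\tilde y_n^\lambda=(H_{c^{-n}\lambda}^{-1})_2(\tilde x_{n-1}^\lambda,\tilde y_{n-1}^\lambda)$ is a polynomial of degree $d/d_1$ in $(\tilde x_{n-1}^\lambda,\tilde y_{n-1}^\lambda)$ whose coefficients $B_{jk}'$ have degree at most $\tilde d$ in the parameter; so, repeating the estimate (\ref{est q}) and using $\tilde M_{n-1}^\lambda\ge\max\{\lvert\tilde x_{n-1}^\lambda\rvert,\lvert\tilde y_{n-1}^\lambda\rvert,\lvert c^{-(n-1)}\lambda\rvert^{\tilde d+1}\}$ (the bound $\tilde M_{n-1}^\lambda\ge\lvert\tilde x_{n-1}^\lambda\rvert$ coming from $(\lambda,x,y)\in K_H^-$), we find $L,M>1$ depending only on $H$ with
\[
\lvert\tilde y_n^\lambda\rvert\le LM\,\lvert c^{-n}\lambda\rvert_+^{\tilde d}\,(\tilde M_{n-1}^\lambda)^{d/d_1}\le LM\,(\tilde M_{n-1}^\lambda)^{1+d/d_1},
\]
where the last inequality uses $\lvert c^{-n}\lambda\rvert_+^{\tilde d}\le\lvert c^{-1}\rvert_+^{\tilde d}\,\tilde M_{n-1}^\lambda$ and absorbs $\lvert c^{-1}\rvert_+^{\tilde d}$ into $M$. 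As also $\lvert c^{-n}\lambda\rvert^{\tilde d+1}\le\lvert c^{-1}\rvert^{\tilde d+1}\tilde M_{n-1}^\lambda$, enlarging $L,M$ once more gives the recursion $\tilde M_n^\lambda\le LM\,(\tilde M_{n-1}^\lambda)^{1+d/d_1}$ (for $n\ge 1$, resp.\ $n>n_0$), hence by induction $\tilde M_n^\lambda\le A^{(1+d/d_1)^n}$ for some constant $A=A(\lambda,x,y)>1$.

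Putting this together, $\lVert H^{-n}(\lambda,x,y)\rVert\le\lvert c^{-n}\lambda\rvert+\lvert\tilde x_n^\lambda\rvert+\lvert\tilde y_n^\lambda\rvert\le A_1^{(1+d/d_1)^n}$ for some $A_1>1$ and all large $n$, so that
\[
G_{n,H}^-(\lambda,x,y)=\frac{1}{\tilde d\,d^{n-1}}\log^+\lVert H^{-n}(\lambda,x,y)\rVert\le\frac{d\log A_1}{\tilde d}\Big(\frac{1+d/d_1}{d}\Big)^{n}\longrightarrow 0
\]
as $n\to\infty$, since $1+d/d_1<d$ (here $d/d_1=d_2\cdots d_m$ and $d_1\ge 2$; the sole borderline case $m=1$, $d_1=2$ is handled exactly as the analogous borderline case of Lemma~\ref{estK+}, by estimating $\lvert c^{-n}\lambda\rvert_+^{\tilde d}$ a little less wastefully). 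I expect the one point needing genuine care, beyond a verbatim backward transcription of Lemma~\ref{estK+}, to be the clause $\lvert\lambda\rvert<1$ in $V_R^-$ when $\lvert c\rvert>1$: it forces the induction to be started only after the $\lambda$-coordinate has contracted below $1$, which is harmless since that happens after finitely many steps. The bookkeeping of the parameter-degrees of the coefficients $B_{jk}'(\lambda)$ of $H^{-1}$ — the backward counterpart of the definition of $\tilde d$ — is likewise routine.
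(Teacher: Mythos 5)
Your overall strategy matches the paper's: reduce to the recursion on $\tilde M_n^\lambda=\max\{R,\lvert\tilde y_n^\lambda\rvert,\dots\}$ obtained from the filtration condition, then divide by $\tilde d\, d^{n-1}$ in the logarithm. Where you deviate is in the exponent of the recursion, and that deviation is not harmless. You bound $\lvert c^{-n}\lambda\rvert_+^{\tilde d}$ by $\lvert c^{-1}\rvert_+^{\tilde d}\tilde M_{n-1}^\lambda$ and arrive at
\[
\tilde M_n^\lambda\le LM\bigl(\tilde M_{n-1}^\lambda\bigr)^{1+d/d_1},
\]
so your argument needs $1+d/d_1<d$, which fails precisely when $m=1,\ d_1=2$ (the quadratic H\'enon fibre). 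The paper's proof never runs into this: after choosing $n_0$ with $\lvert c^{-n_0}\lambda\rvert<1$ (Case $\lvert c\rvert>1$), it uses $\lvert c^{-n}\lambda\rvert_+^{\tilde d}=1$ for $n\ge n_0$, so the coefficient term contributes only a bounded constant and the recursion comes out as $M_n^\lambda\le L\,(M_{n-1}^\lambda)^{\hat d}$ with $\hat d=d/d_1$, which is always $<d$ (indeed $\hat d^n/d^n=d_1^{-n}$). That is the ``less wasteful'' estimate you gesture at in your last sentence, but you leave it as a remark rather than making it the argument; as written, your recursion fails to close in the borderline case.

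A second point worth flagging: you say this borderline case ``is handled exactly as the analogous borderline case of Lemma~\ref{estK+}''. But the situation is not symmetric. In Lemma~\ref{estK+} the forward orbit has $\lvert c^{n}\lambda\rvert\to\infty$, so $\lvert c^{n}\lambda\rvert_+$ is never eventually $1$ and cannot be dropped; that is why the paper's $M_n^\lambda$ there retains the $\lvert c^n\lambda\rvert^{\tilde d+1}$ term and the exponent $1+d/d_m$ appears. In the backward direction with $\lvert c\rvert>1$, by contrast, $\lvert c^{-n}\lambda\rvert\to 0$, which is exactly what lets the paper discard the $\lambda$-term from $M_n^\lambda$ (consistent with $V_R^-$ imposing $\lvert\lambda\rvert<1$ rather than $\lvert\lambda\rvert^{\tilde d+1}<\lvert x\rvert$) and obtain the cleaner exponent $\hat d$. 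So the fix for the backward lemma is not a transcription of a putative fix for the forward one; you should actually write the sharper Case-1 estimate rather than defer to an analogy that does not hold.
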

\begin{proof}
Let $(\lambda,x,y)\notin U_H^-$ which implies $H^{-n}(\lambda,x,y)=(c^{-n} \lambda, \tilde{x}_n^\lambda,\tilde{y}_n^\lambda)\notin V_R^-$ for any $n\in \mathbb{N}$. For a given $\lambda\in \mathbb{C}$, choose $n_0$ large enough such that $\lvert c^{-n_0}\lambda\rvert <1$. Now let 
\begin{equation}\label{Mnlambda}
\lvert \tilde{x}_n^\lambda \rvert \leq M_n^\lambda = \max \{R, \lvert \tilde{y}_n^\lambda \rvert \}
\end{equation}
for $n\geq n_0$.  Now using (\ref{abjk}), it follows that
\begin{equation}\label{tildeYn}
\tilde{y}_n^\lambda={\left(H_{c^{-n}\lambda}^{-1}\right)}_2(\tilde{x}_{n-1}^\lambda,\tilde{y}_{n-1}^\lambda)=\sum_{i+j=0}^{{d}/{d_1}} B_{jk}'(c^{-n}\lambda) \tilde{x}_{n-1}^{\lambda^j}\tilde{y}_{n-1}^{\lambda^k}.
\end{equation}
Note that $B_{jk}$'s are polynomials in $\mathbb{C}$ with degree at most $\tilde{d}$. Therefore, there exists $L>1$ such that 
\begin{equation}\label{estimate pol2}
\left\lvert B_{jk}(\lambda)\right\rvert < L{\lvert \lambda\rvert}_+^{\tilde{d}}
\end{equation}
for all $j,k$'s with $0\leq (j+k) \leq (d-1)$ where ${\lvert\lambda\rvert}_+=\max\{\lvert \lambda\rvert,1\}$. Now using (\ref{Mnlambda}), (\ref{tildeYn}) and (\ref{estimate pol2}), we get that
\begin{eqnarray*}
\left\lvert \tilde{y}_n^\lambda\right\rvert
=\left\lvert {\left(H_{c^{-n}\lambda}^{-1}\right)}_2(\tilde{x}_{n-1}^\lambda,\tilde{y}_{n-1}^\lambda)\right\rvert\leq 
\left\lvert \sum_{i+j=0}^{{d}/{d_1}} B_{jk}'(c^{-n}\lambda) \tilde{x}_{n-1}^{\lambda^j}\tilde{y}_{n-1}^{\lambda^k}\right\rvert
 \leq {LM_{n-1}^\lambda}^{{d}/{d_1}}
\end{eqnarray*}
for some $L>1$ and for all $n\geq n_0$.  Thus by induction, we get 
\begin{equation} \label{estMnlamda}
M_n^\lambda \leq L^{\frac{\tilde{d}^{(n-n_0)}-1}{\tilde{d}-1}} {\left(M_{n_0}^\lambda\right)}^{\hat{d}^{n-n_0}} 
\end{equation}
where $\hat{d}={d}/{d_1}$.  Using the inequality (\ref{estMnlamda}), we get that $G_{n,H}^{-}(\lambda,x,y)\rightarrow 0$ as $n\rightarrow \infty$.
\end{proof}
\no 
Since $G_H^\pm$ are pluriharmonic in $\mathbb{C}^3 \setminus K_H^\pm$ and in $\mathbb{C}^3 \setminus J_H^\pm$, the following proposition holds. Note that the similar result holds for a single H\'{e}non map (Proposition 3.8, \cite{BS1}). Since, one can prove the following proposition using the same arguments as in the case of a single H\'{e}non map, we are omitting the proof.
\begin{prop}\label{pluricomp}
The functions $G_H^\pm$ are the pluricomplex Green functions of the sets $K_H^\pm$ and of the sets $J_H^\pm$.
\end{prop}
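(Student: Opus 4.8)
The plan is to carry over, nearly verbatim, the argument of Bedford--Smillie for a single H\'enon map (Proposition~3.8 of \cite{BS1}), feeding in the properties of $G_H^\pm$ already recorded in Theorem~\ref{Green_Skew} together with the filtration and growth estimates of Lemmas~\ref{est y}, \ref{estK+}, \ref{global est} and, for the negative direction, Lemmas~\ref{est x}, \ref{estK-}. Recall that the pluricomplex Green function of a closed set $E\subseteq\mathbb C^3$ is $V_E^{*}$, the upper semicontinuous regularization of $V_E(z)=\sup\{u(z):u\in\mathcal L_H,\ u|_E\le0\}$, where $\mathcal L_H$ is the Lelong class of plurisubharmonic functions adapted to $H$, i.e.\ those having the same controlled (anisotropic) growth at infinity as $G_H^{\pm}$. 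I will treat $G_H^{+}$; the argument for $G_H^{-}$ is symmetric, replacing $H$, $V_R^{+}$, Lemmas~\ref{est y}, \ref{estK+} by $H^{-1}$, $V_R^{-}$, Lemmas~\ref{est x}, \ref{estK-}.

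One inequality is immediate: by Theorem~\ref{Green_Skew}, $G_H^{+}$ is plurisubharmonic on $\mathbb C^3$, vanishes on $K_H^{+}$, and lies in $\mathcal L_H$ (from the bound $G_{n,H}^{+}(\lambda,x,y)<M+d\log^{+}|\lambda|+\log^{+}\|(x,y)\|$ used in its proof), so it is an admissible competitor for $E=K_H^{+}$ and hence $G_H^{+}\le V_{K_H^{+}}^{*}$; moreover $K_H^{+}=\mathbb C^3\setminus U_H^{+}$ is closed, so $J_H^{+}=\partial K_H^{+}\subseteq K_H^{+}$ and therefore $V_{K_H^{+}}^{*}\le V_{J_H^{+}}^{*}$. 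Thus it suffices to prove $V_{J_H^{+}}^{*}\le G_H^{+}$, which pins all three functions down simultaneously. For this, take $u\in\mathcal L_H$ with $u\le0$ on $J_H^{+}$. First I would upgrade this to $u\le0$ on all of $K_H^{+}$: each component $\Omega$ of $\operatorname{int}K_H^{+}$ has $\partial\Omega\subseteq J_H^{+}$, and the logarithmic growth of $u$ lets a Phragm\'en--Lindel\"of argument force $u\le0$ on $\Omega$ even for unbounded $\Omega$, so $u$ is in fact a competitor for $K_H^{+}$. Since $G_H^{+}$ is pluriharmonic on $U_H^{+}=\mathbb C^3\setminus K_H^{+}$, the difference $u-G_H^{+}$ is plurisubharmonic there and $\le0$ on $\partial U_H^{+}=J_H^{+}$ (where $G_H^{+}=0$). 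On the model escape region $V_R^{+}$ one reads off from Lemma~\ref{est y} that $G_H^{+}(\lambda,x,y)=\tfrac{d}{\tilde d}\log|y|+O(1)$, and because $|y|>\max\{|x|,|\lambda|^{\tilde d+1},R\}$ there this dominates the admissible growth of $u$ up to an additive constant; since $U_H^{+}=\bigcup_{n}H^{-n}(V_R^{+})$ while $K_H^{+}$ is invariant and $G_H^{+}\circ H=dG_H^{+}$, one propagates this comparison from $V_R^{+}$ over all of $U_H^{+}$ --- replacing $u$ by the competitors $d^{-n}u\circ H^{n}$, whose admissible constants stay bounded by Lemma~\ref{global est} and which converge to $G_H^{+}$ on $U_H^{+}$ because $H^{n}(z)\in V_R^{+}$ for large $n$ and $G_{n,H}^{+}\to G_H^{+}$ --- after which the maximum principle gives $u-G_H^{+}\le0$ on $U_H^{+}$. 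Together with $u\le0=G_H^{+}$ on $K_H^{+}$ this gives $u\le G_H^{+}$ on $\mathbb C^3$; taking the supremum over such $u$ and regularizing yields $V_{J_H^{+}}^{*}\le G_H^{+}$.

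I expect two points to carry the real weight. The first is identifying the correct Lelong class $\mathcal L_H$, equivalently the correct \textquotedblleft norm\textquotedblright\ on $\mathbb C^3$ reflecting the exponent $\tilde d+1$ attached to $\lambda$ in $V_R^{+}$, so that $G_H^{\pm}$ is genuinely its maximal element; this is precisely where the skew--product situation diverges from the single H\'enon case (where $\tilde d=d$ and the ordinary class $u\le\log^{+}\|z\|+C$ works). The second is the maximum--principle / Phragm\'en--Lindel\"of step on the \emph{unbounded} domain $U_H^{+}$, and on unbounded components of $\operatorname{int}K_H^{+}$, where one must match the anisotropic admissible growth against the asymptotics of $G_H^{+}$ on $V_R^{+}$ and its $H$--preimages and organize the iteration $u\mapsto d^{-n}u\circ H^{n}$ so that the normalization built into $G_{n,H}^{\pm}$ causes the leftover additive constants to disappear in the limit. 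Everything else amounts to bookkeeping with the estimates already in hand.
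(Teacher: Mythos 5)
The paper supplies no actual argument for Proposition~\ref{pluricomp}: it observes that $G_H^\pm$ is pluriharmonic off $K_H^\pm$ and then defers to Proposition~3.8 of \cite{BS1}, saying the same arguments carry over. Your proposal is a faithful attempt to write out that Bedford--Smillie argument in the skew--product setting, and the skeleton --- $G_H^+\le V_{K_H^+}^*\le V_{J_H^+}^*$ from admissibility, then forcing $V_{J_H^+}^*\le G_H^+$ via an upgrade from $J_H^+$ to $K_H^+$ and a maximum--principle comparison of $u-G_H^+$ on $U_H^+$ --- is exactly the route the paper is gesturing at. So in approach you and the paper coincide.

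There is, however, a concrete step in your sketch that does not go through as written, and it is precisely one of the two points you yourself flag as load--bearing. You take $\mathcal L_H$ to be the class suggested by the bound $G_{n,H}^+\le M+d\log^+|\lambda|+\log^+\|(x,y)\|$ from the proof of Theorem~\ref{Green_Skew}, and then assert that on $V_R^+$ the asymptotic $G_H^+=\tfrac{d}{\tilde d}\log|y|+O(1)$ dominates the admissible growth of any $u\in\mathcal L_H$ up to a constant. It does not: on $V_R^+$ one has $|\lambda|<|y|^{1/(\tilde d+1)}$ and $\|(x,y)\|\asymp|y|$, so a competitor can grow like $\bigl(1+\tfrac{d}{\tilde d+1}\bigr)\log|y|+O(1)$, while $G_H^+$ only grows like $\tfrac{d}{\tilde d}\log|y|$; since $\tilde d\ge d\ge 2$ gives $\tfrac{d}{\tilde d}\le 1<1+\tfrac{d}{\tilde d+1}$, the domination fails and the Phragm\'en--Lindel\"of step cannot be run with this class. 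The fix is not merely bookkeeping: one must identify a genuinely anisotropic class (with a weight reflecting the $\tilde d+1$ exponent on $\lambda$ in $V_R^+$ and the factor $d/\tilde d$ on $\|(x,y)\|$) for which $G_H^\pm$ really is extremal, \emph{and} one must check that this class is intrinsic to $J_H^\pm$ rather than to $H$, since in the proof of Theorem~\ref{skew-ncompact} the same class has to serve a second map $F$ with possibly different $d_F,\tilde d_F$. The second point you flag --- Phragm\'en--Lindel\"of on unbounded components of $\operatorname{int}K_H^+$ and on the unbounded level sets $\{G_H^+=c\}$, which for $|c|>1$ escape to infinity in the $\lambda$--direction --- is likewise a genuine departure from the single--H\'enon case and is not resolved by ``the same arguments.'' Both gaps are shared by the paper's cite--and--omit treatment; your proposal has the merit of naming them, but it stops short of closing them.
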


\begin{prop}\label{P3}
	For $(\lambda,x,y)\in V_R^+$, 
	\begin{equation*}
	\frac{x_n^\lambda}{y_n^\lambda} \rightarrow 0
	\end{equation*}	
	as $n\rightarrow \infty$ and  for $(\lambda,x,y)\in V_R^-$, 
	\begin{equation*}
	\frac{y_n^\lambda}{x_n^\lambda} \rightarrow 0
	\end{equation*}	
	as $n\rightarrow \infty$.
\end{prop}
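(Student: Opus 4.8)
The plan is to propagate the one-step estimates established inside the proof of Lemma \ref{est y} along the forward orbit. Work in Case~1, so $\lvert c\rvert>1$, and fix $R$ large enough that every bound obtained in the proof of Lemma \ref{est y} is valid on $V_R^+$; in particular $H(V_R^+)\subseteq V_R^+$, so that for $(\lambda,x,y)\in V_R^+$ one has $H^n(\lambda,x,y)=(c^n\lambda,x_n^\lambda,y_n^\lambda)\in V_R^+$ for every $n\ge 0$, and each one-step bound may be applied at every iterate $H^n(\lambda,x,y)$.

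First I would note that, by the estimate (\ref{estimate y}) of Lemma \ref{est y}, $\lvert y_n^\lambda\rvert\to\infty$ as $n\to\infty$ (for $R$ large). The one substantive step is then the following: applying (\ref{x1}) at the point $H^n(\lambda,x,y)\in V_R^+$ gives $\lvert x_{n+1}^\lambda\rvert<\delta\,\lvert y_n^\lambda\rvert^{d-1}$, while applying (\ref{est leq}) at the same point gives $\lvert y_{n+1}^\lambda\rvert>(\lvert c_H\rvert-\delta)\,\lvert y_n^\lambda\rvert^{d}$. Dividing the two,
\[
\frac{\lvert x_{n+1}^\lambda\rvert}{\lvert y_{n+1}^\lambda\rvert}<\frac{\delta}{(\lvert c_H\rvert-\delta)\,\lvert y_n^\lambda\rvert}\longrightarrow 0\qquad(n\to\infty),
\]
which yields $x_n^\lambda/y_n^\lambda\to 0$, the first assertion. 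For the second assertion I would run the symmetric argument with $H^{-1}$, $V_R^-$, $c_H'$ and Lemma \ref{est x} in the roles of $H$, $V_R^+$, $c_H$ and Lemma \ref{est y}, replacing (\ref{x1}) and (\ref{est leq}) by the corresponding one-step bounds that appear in the proof of Lemma \ref{est x}.

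I do not expect a genuine obstacle here: once Lemma \ref{est y} is available the argument is essentially one line, the growth $\lvert y_n^\lambda\rvert\to\infty$ beating the fact that the numerator carries one power fewer of $\lvert y_n^\lambda\rvert$ than the denominator. The only points requiring a little care are that the one-step estimates from the proof of Lemma \ref{est y} may legitimately be invoked at every iterate precisely because $V_R^+$ is forward invariant, and that $R$ must be fixed once and for all large enough that those estimates hold and, simultaneously, $\lvert y_n^\lambda\rvert\to\infty$ along every orbit in $V_R^+$; in the $V_R^-$ case one must additionally remember that $V_R^-$ is $H^{-1}$-invariant rather than $H$-invariant, so that the relevant iterates are the backward ones and $c_H'$ plays the role of $c_H$.
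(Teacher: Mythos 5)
Your proposal is correct and follows essentially the same route as the paper: combine a one-step upper bound on the next $x$-coordinate with the one-step lower bound (\ref{est leq}) on the next $y$-coordinate, iterate using the forward invariance $H(V_R^+)\subseteq V_R^+$, and let the super-exponential growth of $\lvert y_n^\lambda\rvert$ do the work. The only cosmetic difference is bookkeeping: the paper's estimate (\ref{xnn}) keeps the prefactor $\lvert c^{n-1}\lambda\rvert_+^{\tilde d}$ explicit and then argues that it is dominated by $\lvert y\rvert^{d^{n-1}}$, whereas you absorb that prefactor by invoking (\ref{x1}) at the iterate (legitimate, since $H^n(\lambda,x,y)\in V_R^+$ forces $\lvert y_n^\lambda\rvert>\lvert c^n\lambda\rvert^{\tilde d+1}$, which is exactly the inequality used to derive (\ref{x1})).
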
	
\begin{proof}
	Since $H(V_R^+)\subseteq V_R^+$,  using (\ref{ABjk}) we get that
	\begin{equation}\label{xnn}
	\left\lvert x_n^\lambda \right\rvert= \left\lvert \sum_{j+k=0}^{{d}/{d_m}} A_{jk}(c^{n-1}\lambda){x_{n-1}^{\lambda ^j}} {y_{n-1}^{\lambda^k}}\right\rvert \leq LM {\lvert c^{n-1} \lambda \rvert}_+^{\tilde{d}}  {\lvert y_{n-1}^\lambda \rvert}^{d-1} 
	\end{equation}
	for some $L>1$. Also from (\ref{est leq}), it follows that for $\delta>0$ small enough,
	\begin{equation}\label{ynn}
	\lvert y_n^\lambda \rvert > \left( \lvert c_H\rvert -\delta\right){ \lvert y_{n-1}^\lambda \rvert}^{d}.
	\end{equation}
	Combining (\ref{xnn}) and (\ref{ynn}), it follows that 
	\begin{equation}\label{R}
	\frac{\left \lvert y_n^\lambda \right\rvert}{\left \lvert x_n^\lambda\right\rvert} > \frac {K \lvert y_{n-1}^\lambda \rvert}{ {\left\lvert c^{n-1} \lambda \right\rvert}_+^{\tilde{d}} } 
	\geq \frac{K}{{\left\lvert c^{n-1} \lambda \right\rvert}_+^{\tilde{d}} } {(\lvert c_H \rvert-\delta)}^{\frac{d^{n-1}-1}{d-1}} {\lvert y \rvert}^{d^{n-1}}
	\end{equation}
	for some $K>1$. Now since $\lvert y \rvert>R>1$,  it follows from (\ref{R}) that
	\begin{equation*}
	\frac{x_n^\lambda}{y_n^\lambda} \rightarrow 0
	\end{equation*}	
	as $n\rightarrow \infty$. Using the similar arguments, one can show that for $(\lambda,x,y)\in V_R^-$
	\begin{equation*}
	\frac{\tilde{y}_n^\lambda}{\tilde{x}_n^\lambda} \rightarrow 0
	\end{equation*}	
	as $n\rightarrow \infty$.
\end{proof}
\begin{rem}\label{P3rem}
Therefore it follows from Proposition \ref{P3} that if $(\lambda,x,y)\in V_R^+$, then  $H^n(\lambda,x,y)\rightarrow [0:0:1:0]$ in $\mathbb{P}^3$ as $n\rightarrow \infty$. Similarly, if $(\lambda,x,y)\in V_R^-$, then  $H^{-n}(\lambda,x,y)\rightarrow [0:1:0:0]$ in $\mathbb{P}^3$ as $n\rightarrow \infty$.
\end{rem}

\medskip 
\no 


\begin{center}
	\it{Case 2: }$\lvert c \rvert < 1$  and  \it{Case 3: }$\lvert c \rvert =1$
\end{center}
The analogues of the above results can be proved for the skew products of H\'{e}non maps with $\lvert c \rvert \leq 1$ and to do so one needs to consider the cases $\lvert c\rvert<1$ and $\lvert c\rvert=1$ separately. In particular, Remark \ref{P3rem} holds for all $0 \neq c\in \mathbb{C}$. Recall that we need to modify the  filtrations in each cases. 

\section{Rigidity of skew products of H\'{e}non maps with non-comapct parameter space}
\subsection*{B\"{o}ttcher coordinates and its relation with Green functions}

\begin{prop}\label{Bot}
	Let $H:\mathbb{C}^3\rightarrow \mathbb{C}^3$ be a skew products of H\'{e}non maps. Then there exist non-vanishing holomorphic functions 
	$\phi_H^\pm: V_R^\pm \rightarrow \mathbb{C}$  such that 
	\[
	\phi_H^+\circ H(\lambda,x,y)={(c_H)}^{\frac{d}{\tilde{d}}}{(\phi_H(\lambda,x,y))}^d
	\]
	in $V_R^+$ and 
	\[
	\phi_H^-\circ H^{-1}(\lambda,x,y)={(c_H')}^{\frac{d}{\tilde{d}}}{(\phi_H^-(\lambda,x,y))}^d
	\]
	in $V_R^-$. Further,
	\[
	\phi_H^+(\lambda,x,y)\sim y \text{ as } \lVert(\lambda,x,y)\rVert\rightarrow \infty \text{ in } V_R^+
	\]
	and
	\[
	\phi_H^-(\lambda,x,y)\sim x \text{ as } \lVert(\lambda,x,y)\rVert\rightarrow \infty \text{ in } V_R^-.
	\]
	
\end{prop}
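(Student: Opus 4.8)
The plan is to build $\phi_H^+$ on $V_R^+$ by the telescoping--product scheme already used for shift--like maps in Proposition \ref{Step 4}, now with Lemma \ref{est y} supplying the filtration estimates; the construction of $\phi_H^-$ on $V_R^-$ is then entirely parallel, with $H^{-1}$, Lemma \ref{est x} and the expansion ${(H_\lambda^{-1})}_1(x,y)=c_H'x^d+q_\lambda'(x,y)$ replacing the corresponding data for $H$. Fix $\delta\in(0,h)$, $h=\min\{|c_H|,1\}$, and $R\ge R_0(\delta)$ as in Lemma \ref{est y}, so that $H(V_R^+)\subseteq V_R^+$ together with the two--sided bounds on $y_n^\lambda$ hold on $V_R^+$; write $y_n^\lambda=\pi_3\circ H^n(\lambda,x,y)$.

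First I would isolate the basic local quantity. For $(\lambda,x,y)\in V_R^+$ and $n\ge 0$ put
\[
r_n(\lambda,x,y)=\frac{y_{n+1}^\lambda}{{(y_n^\lambda)}^d}=c_H+\frac{q_{c^n\lambda}(x_n^\lambda,y_n^\lambda)}{{(y_n^\lambda)}^d}.
\]
The estimate $|q_\lambda(x,y)|\le C{\lvert y\rvert}^{d-\frac{1}{\tilde{d}+1}}$ on $V_R^+$ — obtained in the proof of Lemma \ref{est y} by using ${\lvert y\rvert}>{\lvert\lambda\rvert}^{\tilde{d}+1}$ to absorb the parameter growth of the coefficients of $q_\lambda$ — together with $H^n(V_R^+)\subseteq V_R^+$ and the lower bound $|y_n^\lambda|\ge{(|c_H|-\delta)}^{\frac{d^n-1}{d-1}}{\lvert y\rvert}^{d^n}$ of Lemma \ref{est y}, gives
\[
\bigl| r_n(\lambda,x,y)-c_H\bigr|\le \frac{C}{\lvert y_n^\lambda\rvert^{\frac{1}{\tilde{d}+1}}}\le \frac{C'}{{\lvert y\rvert}^{d^n/(\tilde{d}+1)}}
\]
locally uniformly on $V_R^+$; in particular, after shrinking $\delta$, every $r_n$ takes values in a fixed closed disc $\overline{D(c_H,\delta)}$ not containing $0$. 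Fix a branch of $\text{Log}$ on that disc, set $r_n^{1/d^{n+1}}:=\exp\!\bigl(\tfrac{1}{d^{n+1}}\text{Log}\,r_n\bigr)$, and define
\[
\phi_H^+(\lambda,x,y)={c_H}^{-\frac{1}{d-1}}\,y\prod_{n\ge 0} r_n(\lambda,x,y)^{\frac{1}{d^{n+1}}}.
\]
Note that no root of $y$ itself is ever taken — only roots of the near--constant factors $r_n$ — so the lack of simple connectivity of $V_R^+$ in the $y$--variable introduces no monodromy.

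Next I would read off the properties. By the displayed decay, the series $\sum_{n\ge 0} d^{-(n+1)}\bigl(\text{Log}\,r_n-\text{Log}\,c_H\bigr)$ converges locally uniformly on $V_R^+$, so the product is a non--vanishing holomorphic function there, and its logarithm tends to $0$ as ${\lvert y\rvert}\to\infty$; since ${\lvert y\rvert}$ is comparable to ${\lVert(\lambda,x,y)\rVert}$ on $V_R^+$, this yields $\phi_H^+(\lambda,x,y)\sim y$ as ${\lVert(\lambda,x,y)\rVert}\to\infty$ in $V_R^+$. For the functional equation, the telescoping identity
\[
y\prod_{n=0}^{N-1} r_n(\lambda,x,y)^{\frac{1}{d^{n+1}}}={(y_N^\lambda)}^{\frac{1}{d^N}}
\]
(for the coherent branch) shows $\phi_H^+={c_H}^{-\frac{1}{d-1}}\lim_{N}{(y_N^\lambda)}^{1/d^N}$; applying this to $H(\lambda,x,y)$, whose $N$--th $y$--coordinate is $y_{N+1}^\lambda$, gives
\[
\phi_H^+\circ H={c_H}^{-\frac{1}{d-1}}\lim_{N}{(y_{N+1}^\lambda)}^{\frac{1}{d^N}}={c_H}^{-\frac{1}{d-1}}\Bigl(\lim_{N}{(y_{N+1}^\lambda)}^{\frac{1}{d^{N+1}}}\Bigr)^{\!d}={c_H}^{-\frac{1}{d-1}}\bigl({c_H}^{\frac{1}{d-1}}\phi_H^+\bigr)^{d},
\]
which is the asserted identity for $\phi_H^+$ (the precise power of $c_H$ being the one fixed by the normalisation $\phi_H^+\sim y$).

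Finally, the same construction applied to $H^{-1}$ on $V_R^-$, using Lemma \ref{est x} and ${(H_\lambda^{-1})}_1(x,y)=c_H'x^d+q_\lambda'(x,y)$ with $q_\lambda'$ of degree $<d$, produces $\phi_H^-={c_H'}^{-\frac{1}{d-1}}x\prod_{n\ge 0}\bigl(\tilde{x}_{n+1}^\lambda/{(\tilde{x}_n^\lambda)}^d\bigr)^{1/d^{n+1}}$ with the stated properties; the cases $|c|<1$ and $|c|=1$ are covered by taking $V_R^\pm$ as defined for those cases in Section 3. The step I expect to be the crux is the uniform--in--$n$, locally uniform--in--$(\lambda,x,y)$ estimate $|r_n-c_H|\le\delta$ with doubly exponential decay: this is exactly where the shape of $V_R^+$ — in particular the constraint ${\lvert y\rvert}>{\lvert\lambda\rvert}^{\tilde{d}+1}$ taming the parameter dependence of the coefficients — is essential, and it is precisely what Lemma \ref{est y} was designed to furnish.
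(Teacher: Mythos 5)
Your construction is the same telescoping--product scheme the paper uses, and the key convergence estimate is the same one the paper labels as (\ref{Qest}), derived from Lemma \ref{est y} via the constraint $\lvert y\rvert>\lvert\lambda\rvert^{\tilde d+1}$ in $V_R^+$. So the core idea matches. There is, however, one substantive divergence worth flagging, and it is in your favour. You take $d^{N}$-th roots, so your partial products are $(y_N^\lambda)^{1/d^N}$ and your functional equation comes out as $\phi_H^+\circ H=c_H\,(\phi_H^+)^d$, which is the unique power of $c_H$ compatible with the normalisation $\phi_H^+\sim y$. The paper instead uses $(\tilde d\,d^{N-1})$-th roots (matching the normalisation $1/(\tilde d\,d^{n-1})$ used to define $G_H^\pm$); this yields a product whose modulus is asymptotic to $\lvert y\rvert^{d/\tilde d}$, not $\lvert y\rvert$, and whose functional equation carries the exponent $c_H^{d/\tilde d}$ as stated. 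Whenever $\tilde d>d$ and $\lvert c_H\rvert\ne 1$, these two requirements in the Proposition --- the asymptotic $\phi_H^+\sim y$ and the factor $c_H^{d/\tilde d}$ --- cannot both hold; a direct check with, say, $H_\lambda(x,y)=(y,\,2y^2+\lambda^3 y-x)$ (so $d=2$, $\tilde d=4$, $c_H=2$) makes the incompatibility explicit. In addition, your remark that only roots of the near-constant factors $r_n=y_{n+1}^\lambda/(y_n^\lambda)^d$ are ever taken is not merely cosmetic: the paper's first ratio $(y_1^\lambda)^{1/\tilde d}/y$ implicitly involves $y^{d/\tilde d}$, which is not single-valued on the non-simply-connected region $V_R^+$ unless $\tilde d\mid d$; your formulation avoids this monodromy issue outright. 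In short, your normalisation is the internally consistent one; if you adopt the paper's $\tilde d\,d^{n-1}$ bookkeeping you must either replace $\phi_H^+\sim y$ by $\lvert\phi_H^+\rvert\sim\lvert y\rvert^{d/\tilde d}$, or replace $c_H^{d/\tilde d}$ by $c_H$ as you have done.
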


\begin{proof}
	Consider the following telescoping product 
	\begin{equation}\label{product}
	y. \frac{{(y_1^\lambda)}^{\frac{1}{\tilde{d}}}}{y}. \cdots. \frac{{(y_{n+1}^\lambda)}^{\frac{1}{\tilde{d}d^{n}}}}{{(y_{n}^\lambda)}^{\frac{1}{\tilde{d}d^{n-1}}}}. \cdots.
	\end{equation}
	Now by (\ref{ABjk}), 
	\begin{equation*}
	y_{n+1}^\lambda= {(H_\lambda^{n+1})}_2(x,y)= c_H {(y_n^\lambda)}^d+ q_{c^n \lambda}(x_n^\lambda, y_n^\lambda).
	\end{equation*}
	Hence, 
	\begin{eqnarray*}
	\frac{{(y_{n+1}^\lambda)}^{\frac{1}{\tilde{d}d^{n}}}}{{(y_{n}^\lambda)}^{\frac{1}{\tilde{d}d^{n-1}}}}&=&\frac{{\left(c_H {(y_n^\lambda)}^d+ q_{c^n \lambda}(x_n^\lambda, y_n^\lambda)\right)}^{\frac{1}{\tilde{d} d^n}}} {{\left(y_{n}^\lambda\right)}^{\frac{1}{\tilde{d}d^{n-1}}}}\nonumber \\
	&=&{ \left( c_H+ \frac{q_{c^n \lambda}(x_n^\lambda, y_n^\lambda)} {{(y_n^\lambda )}^d} \right)}^{\frac{1}{\tilde{d} d^n}} \nonumber \\
	&=& {c_H}^{\frac{1}{\tilde{d} d^n}} { \left( 1+ \frac{q_{c^n \lambda}(x_n^\lambda, y_n^\lambda)} {c_H{(y_n^\lambda )}^d} \right)}^{\frac{1}{\tilde{d} d^n}}.
	\end{eqnarray*}
	Now
	\begin{equation}\label{Qest}
	\left \lvert\frac{ q_{c^n \lambda}(x_n^\lambda, y_n^\lambda)}{c_H {(y_n^\lambda)}^d}\right \rvert  \leq  \frac{1}{\left \lvert c_H {(y_n^\lambda)}^d \right \rvert}\sum_{j+k=0}^{d-1} \left \lvert B_{jk}(c^n \lambda) x_n^{\lambda^j}  y_n^{\lambda^k}\right \rvert
	\leq  \frac{L{\lvert c^n \lambda \rvert}_+^{\tilde{d}}}{\lvert c_H y_n^\lambda\rvert } <\frac{1}{2}
	\end{equation}
	for $\lvert y \rvert> R>>1$ and for all $n\geq 1$. The second last inequality follows from (\ref{ABjk}) and from the fact that $H(V_R^+)\subseteq V_R^+$. The last inequality follows from (\ref{estimate y}). 
	
	\medskip 
	\no 
	Let  ${ \left( 1+ \frac{q_{c^n \lambda}(x_n^\lambda, y_n^\lambda)} {c_H{(y_n^\lambda )}^d} \right)}^{\frac{1}{\tilde{d} d^n}}$ be the principal branch of $\tilde{d}d^n$-th root of  ${ \left( 1+ \frac{q_{c^n \lambda}(x_n^\lambda, y_n^\lambda)} {c_H{(y_n^\lambda )}^d} \right)}$. 
	Now note that the convergence of the product in (\ref{product}) is equivalent to the convergence of the series 
	\begin{equation}\label{Log}
	{\rm{Log}} \ y+ {\rm{Log}} \left(\frac{y_1^\lambda}{y}\right)+\cdots + {\rm{Log}} \left(\frac{{(y_{n+1}^\lambda)}^{\frac{1}{\tilde{d}d^{n}}}}{{(y_{n}^\lambda)}^{\frac{1}{\tilde{d}d^{n-1}}}}\right)+\cdots.
	\end{equation}
	Since, (\ref{Qest}) holds, the above series in (\ref{Log}) is absolutely convergent which shows that the product in (\ref{product}) is convergent. 
	
	Since the infinite product in (\ref{product}) is convergent,  we can define the function $\phi_H^+: V_R^+ \rightarrow \mathbb{C}$ as follows:
	\begin{equation*}
	\phi_H^+(\lambda, x,y)= {(c_H)}^{-\frac{d}{\tilde{d}(d-1)}}\lim_{n\rightarrow \infty} {(y_n^\lambda)}^{\frac{1}{\tilde{d}d^{n-1}}}
	={(c_H)}^{-\frac{d}{\tilde{d}(d-1)}}\left(y. \frac{{(y_1^\lambda)}^{\frac{1}{\tilde{d}}}}{y}. \cdots. \frac{{(y_{n+1}^\lambda)}^{\frac{1}{\tilde{d}d^{n}}}}{{(y_{n}^\lambda)}^{\frac{1}{\tilde{d}d^{n-1}}}}. \cdots\right).
	\end{equation*} 
	Since
	\[
	\left \lvert\frac{ q_{c^n \lambda}(x_n^\lambda, y_n^\lambda)}{c_H {(y_n^\lambda)}^d}\right \rvert  \leq  
	\frac{L{\lvert c^n \lambda \rvert}_+^{\tilde{d}}}{\lvert c_H y_n^\lambda\rvert } \leq \frac{L{\lvert c^n \lambda \rvert}_+^{\tilde{d}}}{\lvert c_H \rvert {(\lvert c_H\rvert-\delta)}^{\frac{d^n-1}{d-1}}{\lvert y \rvert}^{d^n}}, 
	\]
	which we get combining (\ref{estimate y}) and (\ref{est q}), it follows that 
	\begin{equation*}
	\phi_H^+(\lambda, x,y) \sim y
	\end{equation*}
	as $\lVert (\lambda,x,y)\rVert \rightarrow \infty$ in $V_R^+$.
	
	\medskip 
	\no 
	Note that 
	\begin{eqnarray*}
		H^{n+1}(\lambda,x,y)&=&H(c^n \lambda, x_n^\lambda, y_n^\lambda)\\
		&=& (c^{n+1} \lambda, x_{n+1}^\lambda, c_H {(y_n^\lambda)}^d+q_{c^n \lambda} (x_n^\lambda, y_n^\lambda)).
	\end{eqnarray*}
	Hence, 
	\[
	{(H_{c\lambda}^n)}_2(H(\lambda,x,y))=c_H{(H_\lambda^n)}_2^d(\lambda,x,y)(1+L(\lambda,x,y))
	\]
	where $L(\lambda,x,y)\rightarrow 0$ as $\lVert(\lambda,x,y)\rVert \rightarrow \infty$ in $V_R^+$. Thus 
	\begin{eqnarray*}
	\phi_H^+(H(\lambda,x,y))&=&c_H^{-\frac{d}{\tilde{d}(d-1)}}\lim_{n\rightarrow \infty} {(H_{c\lambda}^n)}_2^{\frac{1}{\tilde{d}d^{n-1}}}(H(\lambda,x,y)) \nonumber\\
	&=& c_H^{-\frac{d}{\tilde{d}(d-1)}}\lim_{n\rightarrow \infty}{ \left(c_H {(H_\lambda^n)}_2^d(\lambda,x,y)(1+L(\lambda,x,y))\right)}^{\frac{1}{\tilde{d} d^{n-1}}}\nonumber \\
	&=& c_H^{\frac{d}{\tilde{d}}} \lim_{n\rightarrow \infty} c_H^{\frac{1}{\tilde{d} d^{n-1}}}{ \left(c_H^{-\frac{d}{\tilde{d}(d-1)}} {(H_\lambda^n)}_2^{\frac{1}{\tilde{d}d^{n-1}}}(\lambda,x,y){(1+L(\lambda,x,y))}^{\frac{1}{\tilde{d}d^n}}\right)}^d \nonumber \\
	&=& c_H^{\frac{d}{\tilde{d}}} {(\phi_H^+(\lambda,x,y))}^d.
	\end{eqnarray*}
	
	Now $H^{-n}(\lambda,x,y)=(c^{-n}\lambda, \tilde{x}_n^\lambda, \tilde{y}_n^\lambda)$ where $ \tilde{x}_n^\lambda$ is a polynomial in $x$ and $y$ of degree $\tilde{d}d^{n-1}$ which is precisely the  degree of $H^{-n}$.
	Consider the following telescoping product 
	\begin{equation}\label{product x}
	x. \frac{{(\tilde{x}_1^\lambda)}^{\frac{1}{\tilde{d}}}}{x}. \cdots. \frac{{(\tilde{x}_{n+1}^\lambda)}^{\frac{1}{\tilde{d}d^{n}}}}{{(\tilde{x}_{n}^\lambda)}^{\frac{1}{\tilde{d}d^{n-1}}}}. \cdots
	\end{equation}
	which can be shown to be convergent as before. We define 
	\[
	\phi_H^-(\lambda, x,y)= {(c_H')}^{-\frac{d}{\tilde{d}(d-1)}}\lim_{n\rightarrow \infty} {(\tilde{x}_n^\lambda)}^{\frac{1}{\tilde{d}d^{n-1}}}.
	\]
	That 
	$$
	\phi_H^-(\lambda,x,y)\sim x
	$$ 
	as $\lVert (\lambda,x,y)\rVert\rightarrow \infty$ in $V_R^-$ and
	$$
	\phi_H^-(H^{-1}(\lambda,x,y))={(c_H')}^{\frac{d}{\tilde{d}}} {(\phi_H^-(\lambda,x,y))}^d
	$$ 
	in $V_R^-$, can be shown in the similar fashion as in the case of $\phi_H^+$. 
\end{proof}
Further, it follows from (\ref{Green def}) that
\begin{equation} \label{G+phi}
G_H^+(\lambda,x,y)=\log \lvert \phi_H^+\rvert+ \frac{d}{\tilde{d}(d-1)} \log \lvert c_H\rvert
\end{equation}
in $V_R^+$ and 
\begin{equation}\label{G-phi}
G_H^-(\lambda,x,y)=\log \lvert \phi_H^-\rvert+ \frac{d}{\tilde{d}(d-1)} \log \lvert c_H'\rvert
\end{equation}
in $V_R^-$.

\subsection*{Proof of the Theorem \ref{skew-ncompact}:} Since $J_H^\pm=J_F^\pm$, it follows from Proposition \ref{pluricomp} that the pluricomplex Green functions of these sets coincide, i.e., 
\begin{equation*}
G_H^\pm= G_F^\pm.
\end{equation*}
At this point, we refer the readers to the proof of Theorem 1.1 in \cite{rigidity} (or Theorem \ref{rigidity of shifts} in the present paper) which can be adapted to show that 
\begin{equation} \label{relationFH}
{(F\circ H)}_1= \delta^+ {(H\circ F)}_1 \text{ and } {(F\circ H)}_2= \delta^- {(H\circ F)}_2 
\end{equation}
in $\mathbb{C}^3$ for some $\delta_+, \ \delta_- \in \mathbb{C}$ with $\lvert \delta^\pm \rvert=1$. Note that Proposition \ref{bott} along with the relations (\ref{G+phi}) and (\ref{G-phi}) play a crucial role to establish (\ref{relationFH}).
This finishes the proof.

\section{Rigidity of skew products of H\'{e}non maps with compact parameter space}
\no 
In this section, we consider skew products of H\'{e}non maps fibered over a compact metric space $M$, i.e., the maps of the form
\begin{equation}\label{compHenon}
H(\lambda,x,y)=(\sigma_H (\lambda), H_\lambda(x,y))
\end{equation}
for $(\lambda,x,y)\in M\times \mathbb{C}^2$ where $\sigma_H$ acts as an homeomorphism on $M$. Recently, the  dynamics of these  maps  has been studied in \cite{PV} and \cite{PV1}. For each $\la \in M$ and for each $n\geq 1$, let 
\[
{H}_\lambda^n= H_{\si_H^{n-1}(\la)} \circ H_{\si_H^{n-2}(\la)} \circ \cdots \circ H_{\la}
\]
and 
\[
{H}_\lambda^{-n}=H^{-1}_{\si_H^{-n}(\la)} \circ H^{-1}_{\si_H^{-(n-1)}(\la)} \circ \cdots \circ H^{-1}_{\si_H^{-1}(\la)}.
\]
We define the non-escaping sets and escaping sets  as follow:
\[
I^{\pm}_{H,\lambda} = \left\{ (x, y) \in \mbb C^2 : \Vert {H}_{\la}^{\pm n}(x,y) \Vert \ra \infty \; \text{as} \; n \ra \infty \right\}
\]
and
\[
K^{\pm}_{H,\la} = \left\{ (x, y) \in \mbb C^2 : \text{the orbit} \; \big( {H}_{\la}^{\pm n}(x,y)\big)_{n \ge 0} \; \text{is bounded in} \; \mbb C^2  \right\}
\]
respectively. Further, for each $\la \in M$, we define $J^{\pm}_{H, \la} = \pa K^{\pm}_{H, \la}$. Further, define 
\begin{equation}\label{fiberedGreen}
G_{n,H,\lambda}^\pm (x,y)=\frac{1}{d^n} \log^+ \left\lVert H_\lambda^{\pm n}(x,y) \right\rVert
\end{equation}
for each $n\geq 1$ and for all $(x,y)\in \mathbb{C}^2$.
Recall form \cite{PV1} that the sequence of functions defined in (\ref{fiberedGreen}) converge uniformly in compact to the plurisubharmonic functions $G_{H,\lambda}^\pm$ in $\mathbb{C}^2$.  Further, note that (see \cite{PV1}) a uniform filtration defined as follow:
\begin{align*}
V^+_R &= \{ (x,y) \in \mathbb C^2: \vert x \vert < \vert y \vert, \vert y \vert > R \},\\
V^-_R &= \{ (x,y) \in \mathbb C^2: \vert y \vert < \vert x \vert, \vert x \vert > R \},\\
V_R &= \{ (x, y) \in \mathbb C^2: \vert x \vert, \vert y \vert \le R \}.
\end{align*}
with $R>0$ sufficiently large  works uniformly for all $H_\lambda$'s.
\subsection{Fibered B\"{o}ttcher coordinates}
\no 
For each $\lambda\in M$, let 
\[
H_\lambda(x,y)=({(H_\lambda)}_1(x,y), {(H_\lambda)}_2(x,y))
\]
be a composition of generalized H\'{e}non maps as described before. 
Clearly, the degree of ${(H_\lambda)}_1$ is strictly less than that of ${(H_\lambda)}_2$ when considered as  polynomials in $y$. Further, let, for each $\lambda\in M$
\begin{equation}\label{y coor}
{(H_\lambda)}_2(x,y)=c_H y^d + q_\lambda (x,y)
\end{equation}
where $c_H=\Pi_{j=1}^m c_j^{d_{(j+1)}\cdots  d_m}$ with $d_{j+1}\cdots d_m=1$ when $j=m$, $d=d_n \cdots d_1$ and $q_\lambda$ a polynomial in $x,y $ of degree strictly less than $d$.

\medskip 
\no 
Similarly, let 
\[
{(H_\lambda)}^{-1}(x,y)=({(H_\lambda)}_1'(x,y), {(H_\lambda)}_2'(x,y))
\]
where degree of $(H_\lambda^2)'$ is strictly less than that of $(H_\lambda^1)'$ considering  as  polynomials in $x$.  As before, one can write
\begin{equation}\label{x coor}
{(H_\lambda)}_1'(x,y)= c_H' x^d + q_\lambda'(x,y)
\end{equation}
where $c_H'=\Pi_{j=1}^m (c_j \delta_j^{-1})^{d_{(j-1)}\cdots d_1}$ where $d_{j-1}\cdots d_1=1$ when $j=1$ and $q_\lambda'$ a polynomial in $x, y$ of degree strictly less than $d$.

\begin{prop}\label{fiberedBott}
	Let $H$ be a skew products of H\'{e}non maps of the form (\ref{compHenon}), then for each $\lambda\in M$, there exist  non-vanishing analytic functions $\phi_{H,\lambda}^\pm: V_R^\pm \rightarrow \mathbb{C}$  such that 
	\[
	\phi_{H,\sigma_H(\lambda)}^+(H_\lambda(x,y))=c_H {(\phi_{H,\lambda}^+(x,y))}^d
	\]
	in $V_R^+$ and 
	\[
	\phi_{H,\lambda}^-\circ H_\lambda^{-1}={c_H'} {\left(\phi_{H,\sigma_H(\lambda)}^-(x,y)\right)}^d
	\]
	in $V_R^-$. Further,
	\[
	\phi_{H,\lambda}^+(x,y)\sim y \text{ as } \lVert(x,y)\rVert\rightarrow \infty \text{ in } V_R^+
	\]
	and
	\[
	\phi_{H,\lambda}^-(x,y)\sim x \text{ as } \lVert(x,y)\rVert\rightarrow \infty \text{ in } V_R^-.
	\]
	
\end{prop}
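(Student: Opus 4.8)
The plan is to reproduce, fibre by fibre, the telescoping-product construction of Proposition~\ref{Bot}, with the single base map $\lambda\mapsto c\lambda$ replaced by the homeomorphism $\si_H$ of $M$ and the global degree $\tilde d$ (which there came from the $\lambda$-coordinate of $H$) replaced by the fibrewise degree $d$. Fix $\la\in M$; for $(x,y)\in V_R^+$ write $H_\la^n(x,y)=(x_n^\la,y_n^\la)$, which stays in $V_R^+$ because the uniform filtration of \cite{PV1} gives $H_\mu(V_R^+)\subseteq V_R^+$ for every $\mu\in M$ once $R$ is large. Using \eqref{y coor} at the shifted base point, $y_{n+1}^\la=c_H(y_n^\la)^d+q_{\si_H^n(\la)}(x_n^\la,y_n^\la)$, so the product
\[
y\cdot\frac{(y_1^\la)^{1/d}}{y}\cdots\frac{(y_{n+1}^\la)^{1/d^{n+1}}}{(y_n^\la)^{1/d^n}}\cdots
\]
has general factor $c_H^{1/d^{n+1}}\bigl(1+q_{\si_H^n(\la)}(x_n^\la,y_n^\la)/(c_H(y_n^\la)^d)\bigr)^{1/d^{n+1}}$, and I would define $\phi_{H,\la}^+:=c_H^{-1/(d-1)}\lim_n(y_n^\la)^{1/d^n}$, choosing principal branches of all roots and logarithms.

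The step requiring care is the \emph{uniform} analogue of the estimate \eqref{Qest}. Since $M$ is compact and $\mu\mapsto H_\mu$ is continuous, the coefficients of $q_\mu$ (and of the lower-order part of $(H_\mu)_1$) are bounded by a constant $L$ independent of $\mu$; combined with a lower bound $|y_n^\la|\ge(\mathrm{const})^{d^n}$ on $V_R^+$ — obtained by the argument of Lemma~\ref{est y} applied to the uniform filtration, again with constants not depending on $\la$ — this yields
\[
\Bigl|\frac{q_{\si_H^n(\la)}(x_n^\la,y_n^\la)}{c_H(y_n^\la)^d}\Bigr|\le\frac{L}{|y_n^\la|}<\frac12
\]
for all $n\ge1$ and all $\la\in M$, provided $R$ is large. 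Then the convergence of the product is equivalent to absolute convergence of $\sum_n d^{-(n+1)}\,\mathrm{Log}\,(1+\cdots)$, which is immediate; the limit is non-vanishing and analytic in $(x,y)$, and the tail bound together with the growth of $|y_n^\la|$ gives $\phi_{H,\la}^+(x,y)\sim y$ as $\|(x,y)\|\to\infty$ in $V_R^+$.

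For the functional equation, applying the B\"ottcher limit at the base point $\si_H(\la)$ to the point $H_\la(x,y)$ replaces $y_n^{\si_H(\la)}$ by $y_{n+1}^\la$, because $H_{\si_H(\la)}^n\circ H_\la=H_\la^{n+1}$; then $\lim_n(y_{n+1}^\la)^{1/d^n}=\bigl(\lim_n(y_{n+1}^\la)^{1/d^{n+1}}\bigr)^d=\bigl(c_H^{1/(d-1)}\phi_{H,\la}^+(x,y)\bigr)^d$, and the identity $-\tfrac{1}{d-1}+\tfrac{d}{d-1}=1$ gives $\phi_{H,\si_H(\la)}^+\circ H_\la=c_H(\phi_{H,\la}^+)^d$ on $V_R^+$. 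The backward coordinate is produced symmetrically from the telescoping product in $x$, using $(H_\la)_1'(x,y)=c_H'x^d+q_\la'(x,y)$ from \eqref{x coor} and the iterates $H_\la^{-n}$; the only new point is the bookkeeping $H_\mu^{-(n+1)}=H_{\si_H^{-(n+1)}(\mu)}^{-1}\circ H_\mu^{-n}$, which is exactly what forces the base point in $\phi_{H,\la}^-\circ H_\la^{-1}=c_H'(\phi_{H,\si_H(\la)}^-)^d$ to be shifted by $\si_H$.

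I expect the main — indeed the only genuine — obstacle to be establishing all of the filtration and growth estimates ($H_\mu(V_R^+)\subseteq V_R^+$, the doubly-exponential lower bound on $|y_n^\la|$, the coefficient bound $L$) with constants uniform in the fibre parameter $\la$; this is where compactness of $M$, continuity of $\la\mapsto H_\la$, and the uniform filtration from \cite{PV1} are essential. Once that uniformity is secured, the branch choices, the convergence of the product, and the verification of the asymptotics and the functorial identities are identical to the non-compact case treated in Proposition~\ref{Bot}, and require no new analytic ideas.
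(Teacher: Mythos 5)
Your proposal is correct and follows essentially the same route as the paper: a fibrewise telescoping product for $\phi_{H,\lambda}^\pm$, with compactness of $M$ and continuity of $\lambda\mapsto H_\lambda$ supplying a uniform coefficient bound and a uniform filtration so that the series of principal-branch logarithms converges, after which the asymptotics and the functional equations $\phi_{H,\sigma_H(\lambda)}^+\circ H_\lambda=c_H(\phi_{H,\lambda}^+)^d$ and $\phi_{H,\lambda}^-\circ H_\lambda^{-1}=c_H'(\phi_{H,\sigma_H(\lambda)}^-)^d$ drop out exactly as in Proposition~\ref{Bot}. The one small slip is in the bookkeeping for the backward coordinate: the identity you actually need is $H_{\sigma_H(\lambda)}^{-(n+1)}=H_\lambda^{-n}\circ(H_\lambda)^{-1}$ (rather than the relation $H_\mu^{-(n+1)}=H_{\sigma_H^{-(n+1)}(\mu)}^{-1}\circ H_\mu^{-n}$, which is true but acts on the wrong end of the composition), and it is this identity that produces the shift of base point in the stated functional equation.
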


\begin{proof}
	 Let,
	$$
	H_\lambda^n(x,y)=\left({(H_\lambda^n)}_1(x,y),{(H_\lambda^n)}_2(x,y)\right)
	$$
	for $(x,y)\in \mathbb{C}^2$ 
	and
	\begin{equation*}
	y_{n,\lambda}={(H_\lambda^n)}_2  \text{ and } x_{n,\lambda}={(H_\lambda^n)}_1
	\end{equation*}
	are polynomials in $x$ and $y$ in $\mathbb{C}^2$. Further, note that the degree of $y_{n,\lambda}$ is $d^n$. 
	
	\medskip 
	\no 
	Now consider the following telescoping series
	\begin{equation}\label{tele}
	y.\frac{y_{1,\lambda}^{\frac{1}{d}}}{y}. \cdots .\frac{y_{n+1,\lambda}^{\frac{1}{d^{n+1}}}}{y_{n,\lambda}^{\frac{1}{d^{n}}}}.\cdots.
	\end{equation}
	Note that form (\ref{y coor}), we get the followings:
	
	\begin{eqnarray}\label{bott}
	\frac{y_{n+1,\lambda}^{\frac{1}{d^{n+1}}}}{y_{n,\lambda}^{\frac{1}{d^{n}}}}&=&\frac{{\left(c_H y_{n, \lambda}^d+q_{\sigma_H^n(\lambda)}(x_{n,\lambda},y_{n,\lambda})\right)}^{\frac{1}{d^{n+1}}}}{y_{n,\lambda}^{\frac{1}{d^n}}}\nonumber\\
	&=&{\left(c_H+\frac{q_{\sigma_H^n(\lambda)}(x_{n,\lambda},y_{n,\lambda})}{y_{n,\lambda}^d}\right)}^{\frac{1}{d^{n+1}}}\nonumber\\
	&=& {c_H}^{\frac{1}{d^{n+1}}}{\left(1+\frac{q_{\sigma_H^n(\lambda)}(x_{n,\lambda},y_{n,\lambda})}{c_H y_{n,\lambda}^d}\right)}^{\frac{1}{d^{n+1}}}.
	\end{eqnarray}
	\no 
	Since the parameter space $M$ is compact and  and the coefficients of $q_\lambda$'s vary continuously in $\lambda$, we can choose an $R>0$ sufficiently large such that    
	\[
	\left\lvert \frac{q_{\sigma_H^n(\lambda)}(x_{n,\lambda},y_{n,\lambda})}{c_H y_{n,\lambda}^d}\right\rvert <1
	\]
	for all $\lvert y\rvert >R$ and for all $n\geq 1$.
	Hence, ${\left(1+\frac{q_{\sigma_H^n(\lambda)}(x_{n,\lambda},y_{n,\lambda})}{c_H y_{n,\lambda}^d}\right)}^{\frac{1}{d^{n+1}}}$, the principal branch of the $d^{n+1}$-th root of 
	${\left(1+\frac{q_{\sigma_H^n(\lambda)}(x_{n,\lambda},y_{n,\lambda})}{c_H y_{n,\lambda}^d}\right)}$
	is well-defined for all $\lambda\in M$ and for all $n\geq 1$.
	
	\medskip 
	\no 
	Note that, for a fixed $\lambda\in M$, there exist $L,\  \tilde{L}>0$ 
	such that
	\begin{equation}\label{zero}
	\left\lvert\frac{q_{\sigma_H^n(\lambda)}(x_{n,\lambda},y_{n,\lambda})}{y_{n,\lambda}^d}\right\rvert\leq \frac{L}{\lvert y_{n,\lambda} \rvert}\leq \frac{\tilde{L}}{{\lvert y\rvert}^{d^n}}
	\end{equation}
	for $R>0$  large enough. Now since, 
	convergence of the series
	\begin{equation}\label{tele1}
	y.\frac{y_{1,\lambda}^{\frac{1}{d}}}{y}. \cdots .\frac{y_{n+1,\lambda}^{\frac{1}{d^{n+1}}}}{y_{n,\lambda}^{\frac{1}{d^{n}}}}.\cdots
	\end{equation}
	implies the convergence of the series
	\begin{equation}\label{Log1}
	{\rm{Log}}\ y+\frac{1}{d} {\rm{Log}} \left(\frac{{y_{1,\lambda}}^{\frac{1}{d}}}{y}\right)+\cdots + {\rm{Log}}\left(\frac{y_{n+1,\lambda}^{\frac{1}{d^{n+1}}}}{y_{n,\lambda}^{\frac{1}{d^{n}}}}\right)+\cdots
	\end{equation}
	using (\ref{zero}), we conclude that the series (\ref{Log1}) converges and consequently the series (\ref{tele1}) also converges.

	\medskip 
	\no 
	Therefore,  
	\[
	\lim_{n\rightarrow \infty} y_{n,\lambda}^{\frac{1}{d^n}}
	\]
	exists.
	
	\medskip 
	\no 
	Now for each $\lambda\in M$, define $\phi_{H,\lambda}^+: V_R^+ \rightarrow \mathbb{C}$ as follows:
	\begin{equation}\label{Green}
	\phi_{H,\lambda}^+(x,y)={c_H}^{-\frac{1}{d-1}} \lim_{n\rightarrow \infty} y_{n,\lambda}^{\frac{1}{d^n}}.
	\end{equation} 
	Since
	\begin{eqnarray*}
		H_\lambda^{n+1}(x,y)&=&H_{\sigma_H^n(\lambda)}\left({(H_\lambda^n)}_1(x,y), {(H_\lambda^n)}_2(x,y)\right)\\
		&=&\left({\left(H_\lambda^{n+1}\right)}_1(x,y), c_H{(H_\lambda^n)}_2^d(x,y)+q_{\sigma_H^n(\lambda)}\left({(H_\lambda^n)}_1(x,y),{(H_\lambda^n)}_2(x,y)\right)\right),
	\end{eqnarray*}
	it follows that
	\begin{eqnarray*}
		{\left(H_{\sigma_H(\lambda)}^n\right)}_2(H_\lambda(x,y))&=& 
		c_H {(H_\lambda^n)}_2^d(x,y)(1+L_\lambda(x,y))
	\end{eqnarray*}
	where $L_\lambda(x,y)\rightarrow 0$ as $\lVert (x,y)\rVert\rightarrow \infty$ in $V_R^+$.
	
	\medskip 
	\no 
	Now
	\begin{eqnarray}\label{simi}
	\nonumber	\phi_{H,\sigma_H(\lambda)}^+(H_\lambda(x,y))&=&  
		c_{H}^{-\frac{1}{d-1}}\lim_{n\rightarrow \infty}{\left(H_{\sigma_H(\lambda)}^n\right)}_2^{\frac{1}{d^n}}(H_\lambda (x,y))\\
		&=&c_H \lim_{n\rightarrow \infty} c_H^{\frac{1}{d^n}}{\left(c_H^{-\frac{1}{d-1}}{(H_\lambda^n)}_2^{\frac{1}{d^n}}(x,y){\left(1+L_\lambda(x,y)\right)}^{\frac{1}{d^{n+1}}} \right)}^d .
	\end{eqnarray}
	Since $L_\lambda(x,y)\rightarrow 0$ as $\lvert y\rvert \rightarrow \infty$ in $V_R^+$ and $ c_H^{\frac{1}{d^n}}\rightarrow 1$ as $n\rightarrow \infty$, we get 
	\begin{equation*}
	\phi_{H,\sigma_H(\lambda)}^+(H_\lambda(x,y))=c_H {(\phi_{H,\lambda}^+(x,y))}^d
	\end{equation*}
	in $V_R^+$.
	
	\medskip 
	\no 
	As before, we define
	\[
	\tilde{x}_{n,\lambda}={(H_\lambda^{-n})}_1,
	\]
	a polynomial in $x$ and $y$ of degree $d^n$ with leading term ${c}_H'$. Next, define
	\begin{equation*}
	\phi_{H,\lambda}^-(x,y)={{c}_H'}^{-\frac{1}{d-1}} \lim_{n\rightarrow \infty} {(\tilde{x}_{n,\lambda})}^{\frac{1}{d^n}},
	\end{equation*}
	which makes sense for the same reason explained in the previous case.
	
	\medskip 
	\no 
	Using a same sort of arguments as in (\ref{simi}), we get that 
	\begin{equation*}
	\phi_{H,\lambda}^-\circ H_\lambda^{-1}(x,y)={c_H'} {\left(\phi_{H,\sigma_H(\lambda)}^-(x,y)\right)}^d
	\end{equation*}
	for all $(x,y)\in V_R^-$ and for all $\lambda\in M$.
\end{proof}


\subsection{Rigidity of skew products of H\'{e}non  maps with the same fibered Julia sets}
\begin{prop}\label{fibered Julia}
	Let $H$ and $F$ be skew products of H\'{e}non maps fibered over a compact metric space $M$ and let for each $\lambda\in M$, the fibered Julia sets of $H$ and $F$ are the same, i.e., $J_{H,\lambda}^\pm= J_{F,\lambda}^\pm$ for all $\lambda\in M$. Then for each $\lambda\in M$, 
	\begin{equation*}
	F_{\sigma_H(\lambda)}\circ H_\lambda = \gamma \circ H_{\sigma_F(\lambda)}\circ F_\lambda 
	\end{equation*}
	for some $\gamma:(x,y)\mapsto (\delta^+ x, \delta^- y)$ with $\lvert \delta^\pm \rvert=1$.
	Consequently,  there exists a skew  map 
	$\Gamma:(\lambda, x,y)=(\mu(\lambda), \delta^+ x, \delta^- y)$  such that $F\circ H=\Gamma \circ H \circ F$  in $M\times \mathbb{C}^2$, where $$\mu=\left(\sigma_F \circ \sigma_H \circ \sigma_F^{-1}\circ \sigma_H^{-1}\right)$$   and $\lvert \delta^\pm \rvert=1$.
\end{prop}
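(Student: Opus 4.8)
The plan is to run, fibrewise, the argument that yielded Theorem \ref{rigidity of shifts} and Theorem \ref{skew-ncompact}, using the fibered B\"{o}ttcher coordinates of Proposition \ref{fiberedBott}; the one genuinely new point is keeping track of the base point over which each B\"{o}ttcher coordinate lives, and that bookkeeping is exactly what forces the twist $\mu=\sigma_F\sigma_H\sigma_F^{-1}\sigma_H^{-1}$. Concretely, I would first record the link between the fibered Green functions and the fibered B\"{o}ttcher coordinates: taking $\log\lvert\cdot\rvert$ in the two functional identities of Proposition \ref{fiberedBott} and comparing with (\ref{fiberedGreen}) gives, for every $\lambda\in M$,
\[
G_{H,\lambda}^+=\log\lvert\phi_{H,\lambda}^+\rvert+\tfrac{1}{d-1}\log\lvert c_H\rvert\ \text{ on }V_R^+,\qquad
G_{H,\lambda}^-=\log\lvert\phi_{H,\lambda}^-\rvert+\tfrac{1}{d-1}\log\lvert c_H'\rvert\ \text{ on }V_R^-,
\]
and likewise for $F$ (after enlarging $R$ so that the uniform filtration serves both $H$ and $F$). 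Since $G_{H,\lambda}^\pm$ is the pluricomplex Green function of $K_{H,\lambda}^\pm$, hence of $J_{H,\lambda}^\pm$ (by the same reasoning as for a single H\'{e}non map; cf.\ Proposition \ref{pluricomp} and \cite{PV1}), the hypothesis $J_{H,\lambda}^\pm=J_{F,\lambda}^\pm$ forces $G_{H,\lambda}^\pm=G_{F,\lambda}^\pm$ on $\mathbb{C}^2$ for every $\lambda$. On the connected set $V_R^+$ the ratio $\phi_{H,\lambda}^+/\phi_{F,\lambda}^+$ is then holomorphic, non-vanishing and of constant modulus, hence a unimodular constant, which must be $1$ because both functions are asymptotic to $y$ at infinity; comparing additive constants also yields $\tfrac{1}{d-1}\log\lvert c_H\rvert=\tfrac{1}{d-1}\log\lvert c_F\rvert$. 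Symmetrically $\phi_{H,\lambda}^-=\phi_{F,\lambda}^-$ on $V_R^-$ and $\lvert c_H'\rvert=\lvert c_F'\rvert$; write $\phi_\lambda^\pm$ for the common functions.

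Next I would iterate the functional identities of Proposition \ref{fiberedBott}. Using $H(V_R^+)\subseteq V_R^+$ and $F(V_R^+)\subseteq V_R^+$, on $V_R^+$ one gets
\[
\phi_{\sigma_F\sigma_H(\lambda)}^+\circ\big(F_{\sigma_H(\lambda)}\circ H_\lambda\big)=c_F c_H^{\,d}\,(\phi_\lambda^+)^{d^2},\qquad
\phi_{\sigma_H\sigma_F(\lambda)}^+\circ\big(H_{\sigma_F(\lambda)}\circ F_\lambda\big)=c_H c_F^{\,d}\,(\phi_\lambda^+)^{d^2},
\]
and, since $\phi_\lambda^+$ (over the source fibre) is common to both, dividing gives
\[
\phi_{\sigma_F\sigma_H(\lambda)}^+\circ\big(F_{\sigma_H(\lambda)}\circ H_\lambda\big)=\delta^-\,\phi_{\sigma_H\sigma_F(\lambda)}^+\circ\big(H_{\sigma_F(\lambda)}\circ F_\lambda\big)\ \text{ on }V_R^+,\qquad \delta^-:=\frac{c_F c_H^{\,d}}{c_H c_F^{\,d}},
\]
with $\lvert\delta^-\rvert=1$ by the preceding step. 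Since $\phi_\nu^+(w)$ is asymptotic to the $y$-coordinate of $w$ as $\lVert w\rVert\to\infty$ in $V_R^+$, and since $F_{\sigma_H(\lambda)}\circ H_\lambda$ and $H_{\sigma_F(\lambda)}\circ F_\lambda$ map $V_R^+$ into $V_R^+$ with $y$-coordinate tending to infinity there, the device of Step 5 in the proof of Theorem \ref{rigidity of shifts} — restrict to a complex line $\{x=x_0\}$, note that the difference of the two (polynomial) second coordinates tends to $0$ along $y\to\infty$ and hence vanishes identically, then vary $x_0$ — produces $\big(F_{\sigma_H(\lambda)}\circ H_\lambda\big)_2=\delta^-\big(H_{\sigma_F(\lambda)}\circ F_\lambda\big)_2$ on $\mathbb{C}^2$. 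Running the mirror argument with $\phi^-$ on the region where both compositions carry points into $V_R^-$ (available because $H^{-1}(V_R^-)\subseteq V_R^-$ and $F^{-1}(V_R^-)\subseteq V_R^-$), and using the forward form $\phi_\lambda^-=c_H'\big(\phi_{\sigma_H(\lambda)}^-\circ H_\lambda\big)^d$ of the $\phi^-$ functional equation so that one extracts a $d^2$-th root exactly as with the $\delta_-^j$ in Step 5, gives $\big(F_{\sigma_H(\lambda)}\circ H_\lambda\big)_1=\delta^+\big(H_{\sigma_F(\lambda)}\circ F_\lambda\big)_1$ for a unimodular $\delta^+$. With $\gamma(x,y):=(\delta^+x,\delta^-y)$ this is precisely $F_{\sigma_H(\lambda)}\circ H_\lambda=\gamma\circ H_{\sigma_F(\lambda)}\circ F_\lambda$ for every $\lambda\in M$.

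Finally, for the consequence it suffices to compute $F\circ H(\lambda,x,y)=\big(\sigma_F\sigma_H(\lambda),(F_{\sigma_H(\lambda)}\circ H_\lambda)(x,y)\big)=\big(\sigma_F\sigma_H(\lambda),\gamma((H_{\sigma_F(\lambda)}\circ F_\lambda)(x,y))\big)$ while $H\circ F(\lambda,x,y)=\big(\sigma_H\sigma_F(\lambda),(H_{\sigma_F(\lambda)}\circ F_\lambda)(x,y)\big)$, so that $\Gamma(\lambda,x,y):=(\mu(\lambda),\delta^+x,\delta^-y)$ with $\mu:=\sigma_F\sigma_H\sigma_F^{-1}\sigma_H^{-1}$ — the unique homeomorphism of $M$ with $\mu\circ(\sigma_H\sigma_F)=\sigma_F\sigma_H$ — satisfies $F\circ H=\Gamma\circ H\circ F$ on $M\times\mathbb{C}^2$. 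I expect the main obstacle to be the passage from the \emph{exact} B\"{o}ttcher identity to the \emph{exact} polynomial identity of the two automorphisms, which rests on the asymptotic comparison along lines; the feature that is absent from the $\mathbb{C}^2$-H\'{e}non and shift-like settings is that the two B\"{o}ttcher coordinates appearing on the two sides sit over the distinct base points $\sigma_F\sigma_H(\lambda)$ and $\sigma_H\sigma_F(\lambda)$, and handling this mismatch is what pins down the precise form of $\mu$.
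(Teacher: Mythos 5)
Your proof is correct and follows the same route the paper takes: you pass from equality of fibered Julia sets to equality of fibered Green functions, use the Böttcher–Green identities $G_{H,\lambda}^{\pm}=\log|\phi_{H,\lambda}^{\pm}|+\text{const}$ and asymptotics at infinity to conclude $\phi_{H,\lambda}^{\pm}=\phi_{F,\lambda}^{\pm}$, iterate the functional equations of Proposition \ref{fiberedBott} for the two compositions $F_{\sigma_H(\lambda)}\circ H_\lambda$ and $H_{\sigma_F(\lambda)}\circ F_\lambda$, and then use the line-restriction asymptotic comparison (as in Step 5 of the shift-like argument) to upgrade the Böttcher identity to an exact polynomial identity. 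The paper's own proof is only a sketch that defers to "the same techniques as Theorem 1.1 of \cite{rigidity}"; your write-up fleshes out precisely those details, and your closing observation — that the two Böttcher coordinates sit over the distinct base points $\sigma_F\sigma_H(\lambda)$ and $\sigma_H\sigma_F(\lambda)$, which is exactly what forces $\mu=\sigma_F\sigma_H\sigma_F^{-1}\sigma_H^{-1}$ — is the one genuinely new bookkeeping point in the fibered setting, correctly handled.
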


\begin{proof}
	Let 
	\[
	F:(\lambda,x,y)=(\sigma_F(\lambda), F_\lambda(x,y))
	\]
	and 
	\[
	H:(\lambda,x,y)=(\sigma_H(\lambda), H_\lambda(x,y)).
	\]
	
	\medskip 
	\no 
	Since for each $\lambda\in M$, the functions $G_{H,\lambda}^+$ and $G_{F,\lambda}^+$ are the pluricomplex Green functions for the sets $J_{H,\lambda}^+$ and $J_{F,\lambda}^+$ respectively (see Proposition 1.2 in \cite{PV}), we have
	\begin{equation*}
	G_{H,\lambda}^+=G_{F,\lambda}^+
	\end{equation*} 
	for all $\lambda\in M$.
	Thus combining (\ref{Green})  and (\ref{fiberedGreen}), we get that
	\begin{equation*}
	G_{H,\lambda}^+=\log \lvert \phi_{H,\lambda}^+\rvert+\frac{1}{d_H-1}\log \lvert c_H \rvert
	\end{equation*}
	and  
	\begin{equation*}
	G_{F,\lambda}^+=\log \lvert \phi_{F,\lambda}^+\rvert+\frac{1}{d_F-1}\log \lvert c_F \rvert
	\end{equation*}
	in $V_R^+$ for all $\lambda\in M$. 
	The rest of the proof follows by using  fibered B\"{o}ttcher coordinates constructed in the Proposition \ref{fiberedBott} and pursuing the same techniques as in the proof of Theorem 1.1 in \cite{rigidity}.

\end{proof}
\bibliographystyle{amsplain}

\begin{thebibliography}{10}

\bibitem{BE}
I.~N. Baker and A.~Er\"{e}menko, \emph{A problem on {J}ulia sets}, Ann. Acad.
  Sci. Fenn. Ser. A I Math. \textbf{12} (1987), no.~2, 229--236. \MR{951972}

\bibitem{Be1}
A.~F. Beardon, \emph{Symmetries of {J}ulia sets}, Bull. London Math. Soc.
  \textbf{22} (1990), no.~6, 576--582. \MR{1099008}

\bibitem{Be2}
A.~F. Beardon, \emph{Polynomials with identical {J}ulia sets}, Complex Variables
  Theory Appl. \textbf{17} (1992), no.~3-4, 195--200. \MR{1147050}

\bibitem{BP}
E. Bedford and V. Pambuccian, \emph{Dynamics of shift-like polynomial
  diffeomorphisms of {$\mathbb{C}^N$}}, Conform. Geom. Dyn. \textbf{2} (1998),
  45--55. \MR{1624646}

\bibitem{BS1}
E. Bedford and J. Smillie, \emph{Polynomial diffeomorphisms of $\mathbb{C}^2$:
  currents, equilibrium measure and hyperbolicity}, Invent. Math. \textbf{103} (1991), no.~1, 69--99. \MR{1079840}

\bibitem{BT}
E. Bedford and B.~A. Taylor, \emph{Uniqueness for the complex
  {M}onge-{A}mp\`ere equation for functions of logarithmic growth}, Indiana
  Univ. Math. J. \textbf{38} (1989), no.~2, 455--469. \MR{997391}

\bibitem{mypaper}
S. Bera, \emph{Polynomial shift--like maps}, arXiv preprint
  arXiv:1805.03142 (2018).

\bibitem{rigidity}
S. Bera, R. Pal, and K. Verma, \emph{A rigidity theorem for
  {H}\'{e}non maps},  European Journal of Mathematics (2019),
https://doi.org/10.1007/s40879-019-00326-7

\bibitem{FC}
D. Coman and J.~E. Forn{\ae}ss, \emph{Green's functions for irregular
  quadratic polynomial automorphisms of {$\mathbb{C}^3$}}, Michigan Math. J.
  \textbf{46} (1999), no.~3, 419--459. \MR{1721516}

\bibitem{DS}
T. C. Dinh and N. Sibony, \emph{Rigidity of {J}ulia sets for
  {H}\'{e}non type maps}, J. Mod. Dyn. \textbf{8} (2014), no.~3-4, 499--548.
  \MR{3345839}

\bibitem{FW}
J.~E. Forn{\ae}ss and H. Wu, \emph{Classification of degree {$2$}
  polynomial automorphisms of {$\mathbb{C}^3$}}, Publ. Mat. \textbf{42} (1998),
  no.~1, 195--210. \MR{1628170}

\bibitem{LP}
G.~Levin and F.~Przytycki, \emph{When do two rational functions have the same
  {J}ulia set?}, Proc. Amer. Math. Soc. \textbf{125} (1997), no.~7, 2179--2190.
  \MR{1376996}

\bibitem{PV}
R. Pal and K. Verma, \emph{Dynamical properties of families of
  holomorphic mappings}, Conform. Geom. Dyn. \textbf{19} (2015), 323--350.
  \MR{3440067}

\bibitem{PV1}
R. Pal and K. Verma, \emph{Ergodic properties of families of {H}\'{e}non maps}, Ann. Polon.
  Math. \textbf{121} (2018), no.~1, 45--71. \MR{3816172}

\bibitem{SS}
W.~Schmidt and N.~Steinmetz, \emph{The polynomials associated with a {J}ulia
  set}, Bull. London Math. Soc. \textbf{27} (1995), no.~3, 239--241.
  \MR{1328699}

\end{thebibliography}

\end{document}